\newtheorem{theorem}{Theorem}[section]
\newtheorem{lemma}[theorem]{Lemma}
\newtheorem{proposition}[theorem]{Proposition}
\newtheorem{corollary}[theorem]{Corollary}
\theoremstyle{definition}
\newtheorem{remark}[theorem]{Remark}
\newtheorem*{ack}{Acknowledgments}
\newcommand{\MMM}{\color{black}}
\newcommand{\KKK}{\color{black}}
\numberwithin{equation}{section}
\def\eps{\varepsilon}
\author{Francesco Bozzola}
\address[F.\ Bozzola]{DIME Dipartimento di ingegneria meccanica, energetica, gestionale e dei trasporti 
	\newline\indent
	Universit\`a di Genova
	\newline\indent
	via alla Opera Pia 15, 16145 Genova, Italy}
\email{francesco.bozzola@edu.unige.it}
\author{Edoardo Mainini}
\address[E.\ Mainini]{DIME Dipartimento di ingegneria meccanica, energetica, gestionale e dei trasporti 
	\newline\indent
	Universit\`a di Genova
	\newline\indent
	via alla Opera Pia 15, 16145 Genova, Italy}
\email{edoardo.mainini@unige.it}
\date{\today}
\keywords{Aggregation-diffusion equations, nonlinear Riesz potential, stationary states}
\subjclass[2010]{35K44, 35R11, 49K20}
\title[Equilibria of aggregation-diffusion models with nonlinear potentials]{Equilibria of aggregation-diffusion models with nonlinear potentials}
\begin{document}
	
	\begin{abstract}
		We consider an evolution model with nonlinear diffusion of porous medium type in competition with a nonlocal drift term favoring mass aggregation. The distinguishing trait of the model is the choice of a nonlinear $(s,p)$ Riesz potential for describing the overall aggregation effect.  We investigate radial stationary states of the dynamics, showing their relation with extremals of suitable Hardy-Littlewood-Sobolev inequalities.  In the case that aggregation does not dominate over diffusion, radial stationary states also relate to global minimizers of a homogeneous free energy functional featuring the $(s,p)$ energy associated to the nonlinear potential. In the limit as the fractional parameter $s$ tends to zero, the nonlocal interaction term becomes a backward diffusion and we describe the asymptotic behavior of the stationary states.   
	\end{abstract}
	
	\maketitle
	\begin{center}
		\begin{minipage}{12cm}
			\small
			\tableofcontents
		\end{minipage}
	\end{center}
	
	\section{Introduction}
	
	We are interested in stationary solutions of aggregation-diffusion models of the form
	\begin{equation}\label{evolution}
	\partial_t \rho=\Delta\rho^m-\chi\,\mathrm{div}(\rho\,\nabla \mathcal S(\rho))\qquad\mbox{in $(0,+\infty)\times\mathbb R^N$},
	\end{equation}
	where $\rho=\rho(x,t)$ represents a mass density whose evolution is driven by a porous medium diffusion ($m>1$) and  a nonlocal interaction modeled by a potential $\mathcal S$ that accounts for long range effects. Here, $\chi>0$ is the sensitivity constant measuring 
	the interaction strength. Equations of the form \eqref{evolution} typically appear in mathematical biology as macroscopic models
	  of interacting particles/agents \cite{BCM, CFTV, ME, P}, such as the Keller-Segel model of chemotaxis \cite{GZ, H, JL, KS, KS2, N}. 
	These models usually feature linear potentials in convolution form, i.e., $\mathcal S(\rho)$ is the convolution of $\rho$ with  some suitable radial convolution kernel  accounting for mutual interaction forces. \\
	
	Among the most relevant modeling examples is the Newtonian or the Riesz (attractive) potential, appearing in the  Keller-Segel model and its many variants, which is given by
	\begin{equation}\label{rieszintro}\mathcal S(\rho)=K_s\ast\rho.\end{equation} Here, the kernel $K_s$ is defined for $0<s<N/2$ as \begin{equation}\label{rie}K_s(x) := c_{N, s}\,|x|^{2\,s-N},\qquad c_{N, s} := \pi^{-\frac{N}{2}}\,2^{-2s}\,\frac{\Gamma\left(N/2-s\right)}{\Gamma(s)},\end{equation}
	and in terms of Fourier transform we have $\hat K_s(\xi)=|\xi|^{-2s}$ (with $\hat u(\xi):=\int_{\mathbb R^N}e^{-ix\cdot\xi}u(x)\,dx$).  The particular case of the Newtonian potential corresponds to $s=1$ if $N\ge 3$. With the choice \eqref{rieszintro},
 the  free energy of the system  is 
\begin{equation*}
		\mathcal{F}_{s}(\rho) = \frac{1}{m-1} \int_{\mathbb{R}^N} \rho^m\, - \frac\chi2\int_{\mathbb R^N}\rho\,K_s\ast\rho,
	\end{equation*}
	featuring the competition among the diffusion term and the total interaction energy associated to the mean field potential.  Functional $\mathcal F_s$ has to be analyzed among
	  mass densities in the following class (defined for any given $M>0$)
	\begin{equation}\label{YM}
	\mathcal{Y}_M = \mathcal{Y}_{M, m} := \left\{\rho \in L^1_+(\mathbb{R}^N) \cap L^m(\mathbb{R}^N)\,: \,\,\int_{\mathbb{R}^N } x \,\rho(x)\, dx = 0, \,\,  \int_{\mathbb{R}^N} \rho(x)\, dx = M\right\},
	\end{equation}
	which naturally arises by taking into account that the evolution problem is formally preserving mass, center of mass and positivity. $\mathcal F_s$ is a Lyapunov functional for the dynamics. In fact,  \eqref{evolution}-\eqref{rieszintro} can be seen as the gradient flow of $\mathcal F_s$ with respect to the square Wasserstein distance, see \cite{CCH,HMVV}. In the search for stationary solutions to the evolution problem \eqref{evolution}-\eqref{rieszintro}, it is therefore natural to look for minimizers (if existing) of $\mathcal F_s$ over $\mathcal Y_M$  and, more generally, for
	critical points satisfying suitable Euler-Lagrange equations. 
	We also stress a crucial property of functional $\mathcal F_s$, which is the homogeneity  with respect to the mass invariant dilations \begin{equation} \label{def:dilation}
		\rho^{\lambda}(x) := \lambda^N\rho(\lambda x), \qquad \mbox{  } x\in \mathbb{R}^N,\,\, \lambda > 0.
	\end{equation}
	Indeed, we have
	$$\mathcal F_s(\rho^\lambda)=\frac{\lambda^{N\,(m-1)}}{m-1}\,\int_{\mathbb R^N}\rho^m - \frac{\chi\, \lambda^{N-2s}}{2}\,\int_{\mathbb R^N}\rho\,K_s\ast\rho.$$
	As a consequence, aggregation and diffusion are in balance if $m=2-2s/N$, which is called the \emph{fair competition regime} \cite{CCH}. If $m$ is below this threshold, aggregation dominates and concentrating all the mass at a single point (that is, letting $\lambda\to+\infty$) is energetically favorable.   \\


	The classical Keller-Segel model \cite{KS} of chemotaxis, in its simplest mathematical formulation \cite{BCC, BDP, DP, JL, S}  is a fair competition model,  formally obtained by letting $N=2$, $s=1$ so that the convolution kernel is the Newtonian kernel (in dimension $2$ it is understood that $K_1(x)=-\tfrac1{2\pi}\log|x|$), and by letting the diffusion be linear $m=m_c=1$ (the diffusion term in the free energy becomes $\int_{\mathbb R^N}\rho\log\rho$). It is well known that a critical mass $M_c$ exists in such a model (whose explicit value is $8\pi/\chi$), and that global-in-time solutions for the associated Cauchy problem exist if the mass is not above the critical mass, while blow up in finite time occurs if $M>M_c$. Moreover, stationary states exist only if $M=8\pi/\chi$, see \cite{BCC2,BCM, CF}.
	The above properties of the classical Keller-Segel model generalize to fair competition models in higher dimension: it has been proven in the Newtonian potential case $s=1, N\ge 3$ in \cite{BCL} that a critical mass $M_c$ still appears for $m=2-2/N$, that its value can be written in terms of the best constant of suitable Hardy-Littlewood-Sobolev (HLS) inequalities,  and that stationary states exist only if $M=M_c$. 
	 The validity of analogous properties for more singular Riesz potentials $0<s<1$ has been shown in \cite{CCH,CCH2}, still in the fair competition regime $m=2-2s/N$. On the other hand, the diffusion dominated regime has been considered in \cite{CHMV}, and in such case stationary states exist for every choice of the mass $M>0$ and can be obtained as minimizers of $\mathcal F_s$ over $\mathcal Y_M$. 
	In the aggregation dominated regime $m<2-2s/N$ the free energy $\mathcal F_s$ is not bounded from below over $\mathcal Y_M$ (whatever the choice of $M>0$), but stationary states of the dynamics can still be obtained, as seen in \cite{CGHMV}, as solutions to the Euler-Lagrange equation associated with the free energy (see also \cite{BL} for the Newtonian case $s=1$). \\

\section{Main results}
\subsection{${{(s,p)}}$ potential, stationary states and HLS inequalities}
	In this work we shall investigate the nonlinear potential counterpart of the previous results about stationary states,
	by considering an interaction described by the nonlinear Riesz potential, which has been introduced in \cite{MH}, see also  \cite{AH}, \cite[Section 4.2]{Maz}, \cite[Section 5.4]{Mizuta} and the references therein. 
	We let \begin{equation}\label{spintro}\mathcal S=\mathcal K_{s,p},\end{equation} where  $1 < p < \infty$, $0<s\,p < N$, and $\mathcal{K}_{s, p}$ stands for the {\it nonlinear $(s,p)$ Riesz potential} given by 
	\[
	\mathcal{K}_{s,p}(\rho) := K_{s/2}\ast (K_{s/2} \ast \rho)^{p'-1}. 
	\]
	Here, $p'$ is the conjugate exponent of $p$, i.e., $1/p + 1/p' = 1$.
	The total interaction energy of the mass density $\rho$ associated to the nonlinear potential $\mathcal K_{s,p}$ (the $(s,p)$ energy) is given by
	\[
	\mathcal I_{s,p}(\rho)=\frac1{p'}\int_{\mathbb R^N}\rho\,\mathcal K_{s,p}(\rho)=\frac1{p'}\int_{\mathbb R^N}(K_{s/2}\ast\rho)^{p'},
	\] 
	where the second equality is due to Plancherel theorem, which also implies that for $\eps\to0$
	\[
	\mathcal I_{s,p}(\rho+\eps\varphi)=\mathcal I_{s,p}(\rho)+\eps\int_{\mathbb R^N}(K_{s/2}\ast\rho)^{p'-1}K_{s/2}\ast\varphi+o(\eps)= \mathcal I_{s,p}(\rho)+\eps\int_{\mathbb R^N}\mathcal K_{s,p}(\rho)\,\varphi+o(\eps)
	\]
	for every test function $\varphi$, showing that indeed $\mathcal K_{s,p}$ is the functional derivative of $\mathcal I_{s,p}$. 
	The free energy is therefore \begin{equation*}
		\mathcal{F}_{s,p}(\rho) = \frac{1}{m-1} \int_{\mathbb{R}^N} \rho^m\,dx - \frac\chi{p'}\,\mathcal I_{s,p}(\rho)
	\end{equation*}
	and the evolution equation \eqref{evolution}-\eqref{spintro} is formally its Wasserstein gradient flow.
	The composition property of $K_s$ shows that for $p=2$ we are reduced to the linear potential case: $\mathcal K_{s,2}(\rho)=K_s\ast\rho$ and $\mathcal F_{s,2}(\rho)=\mathcal F_s(\rho)$.
%
%
	For $p\neq 2$ the free energy is  still a homogeneous functional, satisfying	
	$$\mathcal F_{s,p}(\rho^\lambda)=\frac{\lambda^{N\,(m-1)}}{m-1}\,\int_{\mathbb R^N}\rho^m - \lambda^{N\,(m_c-1)}\,\frac{\chi}{p'}\,\int_{\mathbb R^N}(K_{s/2} \ast \rho)^{p'},$$
	where
	\begin{equation}\label{mcintro}
	m_c:=p'-\frac{s\,p'}{N}
	\end{equation}
	is the critical  exponent.
	Therefore, we still recognize three regimes according to the value of the diffusion exponent $m$: we are in the \emph{diffusion dominated regime} if $m>m_c$, in the \emph{fair competition regime} if $m=m_c$, and in the \emph{aggregation dominated regime} if $m<m_c$.\\
	
	We perform the analysis of stationary states of \eqref{evolution}-\eqref{spintro}. As in the linear potential case, we show that a critical mass appears only if $m=m_c$. Moreover, we show that stationary states are 
	strictly related to optimizers of the following Hardy-Littlewood-Sobolev (HLS) type inequality, stating that if
	\[m>(p^*_s)',\qquad\mbox{where}\qquad p_s^*:=\frac{N\,p}{N-s\,p},
	\]
	 there exists a constant $H>0$ such that
	\begin{equation}\label{ourHLS} 
			\|K_{s/2} \ast h\|^{p'}_{p'} \le H\,\|h\|_1^{p'\,\vartheta_0} \,\|h\|_m^{p'\,(1-\vartheta_0)} \qquad \mbox{ for every } h \in L^{1}_+(\mathbb{R}^N) \cap L^m(\mathbb{R}^N),
		\end{equation}
		where  $0 < \vartheta_0 < 1$ is given by
		\begin{equation*}
			\vartheta_0 := 1 -\frac{m'}{p^*_s} = \frac{1}{(p^*_s)'}\,\frac{m - (p^*_s)'}{m-1}.
		\end{equation*}
We shall prove existence and regularity properties of optimizers of \eqref{ourHLS}, which
  will be shown to   be	
   solutions of the nonlocal equation
\begin{equation}\label{ELintro}
\rho^{m-1}=a\left(\mathcal K_{s,p}(\rho)-\mathcal C\right)_+\qquad \mbox{in $\mathbb R^N$}
\end{equation}
for suitable values of the positive constants $a,\mathcal C$. We notice that for $p=2$, in terms of $u:=K_s\ast \rho$ the above equation becomes the fractional semilinear PDE $$(-\Delta)^su=a^{\frac1{m-1}}\left(u-\mathcal C\right)^{\frac1{m-1}}_+,$$ which is the fractional plasma equation investigated in \cite{CGHMV}. The terminology for such a semilinear equation is due to the fact that the nonlinearity in the right hand side, where $(x)_+:=\max\{x,0\}$, appears in some classical models of plasma physics \cite{Temam1,Temam2}.
 The following is our first main result, which provides the main properties of the HLS optimizers. In the case that $m\ge m_c$, these results can be translated in a statement about minimizers of the free energy $\mathcal F_{s,p}$. In this regard, a critical mass appears for $m=m_c$, given by
 \begin{equation} 
		\label{massa-critica}
		M_c:= \left(\frac{p^*_s}{\chi\,H^*_{m_c,s,p}}\right)^{\frac{N}{sp'}},
	\end{equation} 
	where $H^*_{m,s,p}$ is the best constant in \eqref{ourHLS}.
 \begin{theorem}\label{main1} Let $1 < p < \infty$,  $0<s\,p < N$, $m>(p_s^*)'$. The best constant in the HLS inequality \eqref{ourHLS} is attained. Each optimizer  is radially nonincreasing (up to translation), compactly supported, H\"older regular in $\mathbb R^N$ and smooth in the interior of its support.  It satisfies \eqref{ELintro} for suitable values of the constants $a>0,\mathcal C>0$.

\noindent
If $m> m_c$, then for each optimizer $h$  of the HLS inequality \eqref{ourHLS}   there exists a unique scaling factor $\lambda>0$ such that $h^\lambda$ is (up to translation) a minimizer of functional $\mathcal F_{s,p}$ over $\mathcal Y_{M}$, where $M=\|h\|_1$; conversely for every $M>0$  minimizers  of $\mathcal F_{s,p}$ over $\mathcal Y_{M}$ exist and are optimizers of \eqref{ourHLS}.    

\noindent
If $m=m_c$, then each optimizer $h$ of the HLS inequality \eqref{ourHLS} having mass $M_c$ is (up to translation) a minimizer of $\mathcal F_{s,p}$ over $\mathcal Y_{M_c}$ and $\mathcal F_{s,p}(h)=0$; conversely minimizers of $\mathcal F_{s,p}$ over $\mathcal Y_M$ exist if and only if $M=M_c$ and are optimizers of \eqref{ourHLS}. 
\end{theorem}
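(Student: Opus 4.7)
The plan is to treat the four assertions of the statement in order: existence of HLS optimizers, their qualitative properties, the characterization of minimizers of $\mathcal{F}_{s,p}$ in the diffusion-dominated regime, and the critical-mass picture in the fair-competition regime.

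\textbf{Existence and radial symmetry.} I would set up the HLS ratio $\|K_{s/2}\ast h\|_{p'}^{p'}/(\|h\|_1^{p'\vartheta_0}\|h\|_m^{p'(1-\vartheta_0)})$ as a maximization problem in $L^1_+\cap L^m$ and, by duality, apply the Riesz rearrangement inequality to the trilinear form $\int\int \phi(x)\,K_{s/2}(x-y)\,h(y)\,dx\,dy$ to reduce to a maximizing sequence $\{h_n\}$ of radially nonincreasing functions. The scale invariance of \eqref{ourHLS} under the symmetries $h\mapsto c\,h$ and $h\mapsto h^\lambda$ lets me normalize $\|h_n\|_1=\|h_n\|_m=1$, whereupon the Strauss-type pointwise decay $h_n(|x|)\le C\min(|x|^{-N},|x|^{-N/m})$ available for radially nonincreasing functions yields, via classical compactness of radial $L^q$ embeddings and the smoothing action of the fractional integral $K_{s/2}\ast\cdot$, strong convergence of $K_{s/2}\ast h_n$ in $L^{p'}$ and hence an optimizer. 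That every optimizer is radially nonincreasing up to translation follows from the equality cases of Riesz rearrangement, or equivalently from a symmetry analysis (moving planes) applied to the Euler-Lagrange equation below.

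\textbf{Euler-Lagrange, compact support, regularity.} Lagrange multipliers on the constrained maximization yield the identity
$$
\mathcal{K}_{s,p}(h)=a\,h^{m-1}+\mathcal{C}'\quad\text{on }\{h>0\},
$$
for suitable $a>0$ and $\mathcal{C}'\in\mathbb{R}$. Since $h\in L^1\cap L^m$ and $K_{s/2}(x)\sim|x|^{s-N}$, both $K_{s/2}\ast h$ and $\mathcal{K}_{s,p}(h)$ decay at infinity; combined with radial monotonicity this forces $\mathcal{C}'>0$, so that setting $\mathcal{C}=\mathcal{C}'/a$ gives \eqref{ELintro} and $\mathrm{supp}\,h\subset\{\mathcal{K}_{s,p}(h)\ge\mathcal{C}\}$, which is bounded. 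H\"older continuity of $\mathcal{K}_{s,p}(h)$ follows from classical Riesz potential estimates: $h$ bounded implies $K_{s/2}\ast h$ bounded and H\"older, hence $(K_{s/2}\ast h)^{p'-1}$ is also H\"older, and a further action of $K_{s/2}\ast\cdot$ keeps $\mathcal{K}_{s,p}(h)\in C^{0,\alpha}$; the relation $h=(a(\mathcal{K}_{s,p}(h)-\mathcal{C})_+)^{1/(m-1)}$ propagates this regularity to $h$. A standard bootstrap on $\{h>0\}$ produces $C^\infty$ smoothness inside the support.

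\textbf{Relation with minimizers of $\mathcal{F}_{s,p}$.} Using the homogeneity formula for $\mathcal{F}_{s,p}(h^\lambda)$ quoted in the excerpt, in the diffusion-dominated case $m>m_c$ both exponents $N(m-1),\,N(m_c-1)$ are positive with the former strictly larger, so $\lambda\mapsto\mathcal{F}_{s,p}(h^\lambda)$ has a unique minimizer $\lambda^*(h)>0$ with strictly negative value depending only on $\|h\|_m^m$ and $\|K_{s/2}\ast h\|_{p'}^{p'}$. Substituting $\lambda^*$ reduces $\inf_{\mathcal{Y}_M}\mathcal{F}_{s,p}$ to the maximization of the HLS ratio at fixed mass, matching HLS optimizers (dilated by $\lambda^*$) with minimizers of $\mathcal{F}_{s,p}$; conversely each minimizer is stationary under the dilation family, hence saturates HLS. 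In the fair-competition case $m=m_c$ one computes $\vartheta_0=s/N$ and $p'(1-\vartheta_0)=m_c$, whence HLS yields
$$
\mathcal{F}_{s,p}(h)\ge\frac{\|h\|_{m_c}^{m_c}}{m_c-1}\left(1-\frac{\chi\,H^*_{m_c,s,p}(m_c-1)}{p'}\,\|h\|_1^{sp'/N}\right),
$$
nonnegative iff $\|h\|_1\le M_c$ (using the identification $p'/(m_c-1)=p^*_s$ to match \eqref{massa-critica}), with equality exactly on HLS optimizers of mass $M_c$. Coupled with $\mathcal{F}_{s,p}(h^\lambda)=\lambda^{N(m_c-1)}\mathcal{F}_{s,p}(h)$ this shows the infimum is $-\infty$ for $M>M_c$ and $0$ for $M\le M_c$, attained only when $M=M_c$ precisely by the HLS optimizers.

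The principal technical hurdle lies in the second stage: because $\mathcal{K}_{s,p}$ is genuinely nonlinear (via the pointwise power $(K_{s/2}\ast h)^{p'-1}$), the semilinear fractional PDE techniques available at $p=2$ in \cite{CGHMV} do not transfer directly, and the decay, compact-support and regularity claims must be derived by a potential-theoretic bootstrap tailored to the $(s,p)$ Riesz potential, taking care to justify the sign of the Lagrange multiplier and the pointwise passage from the distributional identity to \eqref{ELintro}.
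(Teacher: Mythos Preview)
Your overall strategy matches the paper's closely: Riesz rearrangement plus normalization and compactness for existence, first variation for the Euler--Lagrange equation, a potential-theoretic bootstrap for regularity, and the dilation analysis to link HLS optimizers with minimizers of $\mathcal F_{s,p}$. The dilation and fair-competition arguments in your final paragraph are essentially those of the paper.

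There is, however, one genuine gap. In the regularity paragraph you write ``$h$ bounded implies $K_{s/2}\ast h$ bounded and H\"older\ldots'' and proceed from there, but boundedness of $h$ is never established: radial monotonicity together with $h\in L^1\cap L^m$ gives only the pointwise decay $h(x)\le C\min(|x|^{-N},|x|^{-N/m})$, which blows up at the origin. The paper singles this out as the main technical point (see the discussion after the statement of Theorem~\ref{main1} and Lemma~\ref{lm:bootstrap-hls}): the variational competitor argument from the $p=2$ literature does not adapt to the nonlinear potential, and instead one runs an integrability bootstrap through the Euler--Lagrange relation. Starting from $h\in L^{m_1}$ with $m_1=m$, one applies HLS to get $K_{s/2}\ast h\in L^{(m_1)^*_s}$, raises to the power $p'-1$, applies HLS again to control $\mathcal K_{s,p}(h)$, and reads off from \eqref{ELintro} that $h\in L^{m_2}$ with $m_2>m_1$; iterating, the exponents $m_k$ diverge (here the hypothesis $m>(p^*_s)'$ is used) and eventually exceed $N/s$, at which point the elementary splitting of Lemma~\ref{lm:bound-l-infinito-riesz} gives $\mathcal K_{s,p}(h)\in L^\infty$ and hence $h\in L^\infty$. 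Without this step your H\"older bootstrap cannot start.

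Two smaller points. First, to upgrade the identity on $\{h>0\}$ to the global relation $h^{m-1}=a(\mathcal K_{s,p}(h)-\mathcal C)_+$ you also need $\mathcal K_{s,p}(h)\le\mathcal C$ on $\{h=0\}$; the paper obtains this by a one-sided variation with nonnegative test functions, rather than via continuity and monotonicity of the potential, which would require regularity not yet available at that stage. Second, your decay argument for $\mathcal C'>0$ does not rule out $\mathcal C'=0$; the paper sidesteps this by computing the Lagrange multipliers explicitly, so that the constant multiplying the mass term equals $(1-m'/p^*_s)\,\|K_{s/2}\ast h\|_{p'}^{p'}/\|h\|_1$, whose positivity is immediate from $m>(p^*_s)'$.
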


Existence of optimizers of \eqref{ourHLS} is a standard application of Riesz rearrangement inequalities along with compactness theorems for radially decreasing functions. For an optimizer, the constants $a,\mathcal C$ can be explicitly expressed, as well as the optimal dilation factor $\lambda$ in the case $m>m_c$, as seen through the proof. 
It is not difficult to check that for every $M>0$ the infimum of $\mathcal F_{s,p}$ over $\mathcal Y_{M}$ equals $-\infty$ if $(p_s^*)'<m<m_c$.  
However, an optimizer of the HLS inequality is still satisfying \eqref{ELintro}, hence  after a suitable mass invariant dilation it  satisfies the Euler-Lagrange equation
\begin{equation}\label{ELintro2}
\rho^{m-1}=\frac{m-1}{m}\left(\chi \mathcal K_{s,p}(\rho)-\mathcal Q\right)_+\qquad\mbox{in $\mathbb R^N$}
\end{equation}
associated with functional $\mathcal F_{s,p}$, where $\mathcal Q>0$ is a constant playing the role of Lagrange multiplier for the mass constraint. As such,  it is (up to translation) a radially nonincreasing stationary state for \eqref{evolution}-\eqref{spintro} as we discuss in Section \ref{sec:3}.	
	About the regularity properties in Theorem \ref{main1}, we mention that
	  boundedness of optimizers has been proved in \cite{CHMV} by a purely variational argument in the case $p=2$, $m>m_c$, which consists in the construction of a suitable bounded competitor for every unbounded candidate. Such an argument seems not straightforward in the nonlinear potential setting, therefore we prove boundedness by classical bootstrap methods, based on HLS inequalities and on \eqref{ELintro}, that are working for every $m>(p_s^*)'$.
We stress that Theorem \ref{main1} generalizes the previous results in the literature about  inequality \eqref{ourHLS}: in the case $p=2$ it is also called the Lane-Emden inequality and has been studied in \cite{CCH,CLLS}. Interestingly, other generalizations have been recently investigated in \cite{GHLM}, in relation with the Choquard equation, which still leads to radially decreasing compactly supported optimizers for suitable choices of the parameters therein.  \\

	\subsection{Asymptotic behavior of stationary states as $s\to0$}
As observed in \cite{HMVV} by considering that $K_s$ is an approximate identity for small $s$, the aggregation term can be considered as an approximation of a backward diffusion process, so that the evolution model \eqref{evolution}-\eqref{spintro} formally becomes the forward-backward diffusion equation
\[
\partial_t\rho=\Delta\rho^m-\frac{\chi}{p'}\,\Delta\rho^{p'}.
\] 
Similarly, the associated free energy $\mathcal F_{s,p}$ formally becomes, in the limit $s\to0$, the following functional featuring the competition of $L^m$ and $L^{p'}$ norms
\begin{equation}\label{limit-functional}
\mathcal F_0(\rho)=\frac1{m-1}\int_{\mathbb R^N}\rho^m-\frac\chi{p'}\int_{\mathbb R^N}\rho^{p'}.
\end{equation}
Clearly, the minimization problem for functional $\mathcal F_0$ in the class $\mathcal Y_M$ is strongly influenced by the sign of $m-p'$, which is reflected in the fact that the critical exponent $m_c$ from \eqref{mcintro} is equal to $p'$ if $s=0$. If $m<p'$, then functional $\mathcal F_{s,p}$ does not have minimizers over $\mathcal Y_M$ for every small enough $s$, and moreover  $m<(p_s^*)'$ for every small enough $s$, so that we are not in the range of parameters of Theorem \ref{main1}.  Therefore, in our second main result, which is the following, we restrict to $m\ge p'$. The result for $p=2, m>2$ is given in \cite{HMVV}. 
	
	\begin{theorem}\label{main2} Let $1 < p < \infty$ and  $0<s\,p < N$. Let $m\ge p'$, $M>0$. Let $\rho_s$ be a minimizer of $\mathcal F_{s,p}$ over $\mathcal Y_M$ for every $s\in(0, N/p)$.\\
	If $m>p'$,
	 then $\rho_s\to\rho_0$ strongly in $L^q(\mathbb R^N)$ for every $1<q<+\infty$ as $s\to 0$, where $\rho_0$ is the unique radially decreasing minimizer of $\mathcal F_0$ over $\mathcal Y_M$, which is the characteristic function of a ball.\\
	Else if $m=p'$, we have $\inf_{\mathcal Y_M} \mathcal F_s\to\inf_{\mathcal Y_M} \mathcal F_0$ as $s\to 0$. Moreover, $\rho_s\to0$ uniformly on $\mathbb R^N$ if $0<\chi<p$, and $\rho_s\to M\delta_0$ in the sense of measures if $\chi>p$.
	 \end{theorem}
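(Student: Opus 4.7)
The plan is a $\Gamma$-convergence argument combined with uniform compactness of the family $\{\rho_s\}$. Writing $\mathcal{F}_0(\rho) = \int_{\mathbb{R}^N} f(\rho)\, dx$ with $f(t) = t^m/(m-1) - (\chi/p')\, t^{p'}$, the first step is to identify the limit minimizer $\rho_0$ in the case $m>p'$. By a bathtub-type argument based on the convex envelope of $f$: the tangent line from the origin to the graph of $f$ touches at $t_0 = (\chi/p)^{1/(m-p')}$ with slope $\alpha:=f(t_0)/t_0<0$, and a direct computation gives $f(t)\ge \alpha\, t$ for all $t\ge 0$, with equality only for $t\in\{0,t_0\}$. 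Hence for every $\rho \in \mathcal{Y}_M$ one has $\mathcal F_0(\rho) \ge \alpha M$, equality forcing $\rho\in\{0,t_0\}$ a.e.; among radially nonincreasing profiles the unique minimizer is $\rho_0 = t_0\, \mathbf{1}_{B_{R_0}}$ with $|B_{R_0}|=M/t_0$.

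For the $\Gamma$-limsup, I would show that for any smooth compactly supported $\rho\in \mathcal Y_M$ one has $\mathcal F_{s,p}(\rho)\to \mathcal F_0(\rho)$ as $s\to 0$: by Plancherel and dominated convergence (using $\hat K_{s/2}(\xi)=|\xi|^{-s}\to 1$ pointwise against a Schwartz $\hat\rho$), $\int (K_{s/2}\ast\rho)^{p'}\to \int \rho^{p'}$. Mollifying $\rho_0$ then yields $\limsup_{s\to 0}\inf_{\mathcal Y_M}\mathcal F_{s,p} \le \mathcal F_0(\rho_0)$. For compactness of $\{\rho_s\}$, Theorem \ref{main1} guarantees that each $\rho_s$ is radially nonincreasing and compactly supported; the point is to prove \emph{uniform-in-$s$} $L^\infty$ and support bounds, by bootstrapping the Euler--Lagrange equation \eqref{ELintro2} against the HLS inequality \eqref{ourHLS}, starting from the uniform energy bound $\mathcal F_{s,p}(\rho_s)\le C$ (from the limsup competitor) to control $\|\rho_s\|_m$, then iteratively higher $L^q$ norms and $L^\infty$, tracking the $s$-dependence of the HLS and Riesz constants. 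With these bounds, extract a subsequence $\rho_{s_k}\to \rho^\star$ strongly in every $L^q$ with $1<q<\infty$; strong convergence combined with the approximate-identity property transfers to the interaction term, so $\mathcal F_0(\rho^\star)\le \liminf \mathcal F_{s_k,p}(\rho_{s_k})$, and uniqueness of $\rho_0$ gives convergence of the full family.

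In the critical case $m=p'$, the functional reduces to $\mathcal F_0(\rho)=\kappa(\chi)\int \rho^{p'}$ with $\kappa(\chi):=(p-\chi)(p-1)/p$, so $\inf_{\mathcal Y_M}\mathcal F_0$ equals $0$ if $\chi<p$ and $-\infty$ if $\chi>p$, neither attained. The scaling identity
\[
\mathcal F_{s,p}(\rho^\lambda) = \frac{\lambda^{N(m-1)}}{m-1}\int \rho^m - \frac{\chi\,\lambda^{N(m_c-1)}}{p'}\int(K_{s/2}\ast\rho)^{p'}
\]
applied with $\lambda\to 0^+$ (resp.\ $\lambda\to \infty$) yields $\inf_{\mathcal Y_M}\mathcal F_{s,p}\to \inf_{\mathcal Y_M}\mathcal F_0$. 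If $\chi<p$, positivity of the diffusion term combined with HLS-controlled interaction and vanishing total energy forces $\|\rho_s\|_m\to 0$; radial monotonicity and $L^1$--$L^m$ interpolation then give $\rho_s\to 0$ uniformly. If $\chi>p$, the bound $\mathcal F_{s,p}(\rho_s)\to -\infty$ through HLS forces $\|\rho_s\|_m\to \infty$, and together with mass conservation and radial monotonicity this implies $\rho_s\to M\delta_0$ vaguely.

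The main obstacle is the uniform-in-$s$ $L^\infty$ and support bounds on $\rho_s$ in the case $m>p'$: the bootstrap through \eqref{ELintro2} and \eqref{ourHLS} must close as $s\to 0^+$, where the Riesz kernel degenerates to a Dirac and the HLS constants themselves depend on $s$. The nonlinear structure of $\mathcal K_{s,p}$ (versus the convolution form available for $p=2$ in \cite{HMVV}) prevents a direct Fourier-based argument and requires careful tracking of the $s$-dependence in the HLS iteration. A secondary difficulty is the concentration argument in the case $\chi>p$, where establishing that the limit measure is precisely $M\delta_0$ (rather than a weaker singular measure or partial loss of mass at infinity) needs quantitative control on the blow-up rate of $\|\rho_s\|_\infty$ combined with tightness.
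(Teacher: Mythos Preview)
Your overall architecture ($\Gamma$-type convergence plus uniform compactness) matches the paper's, and you correctly isolate the uniform-in-$s$ $L^\infty$ and support bounds as the crux. But your proposed resolution---iterating the HLS inequality through \eqref{ELintro2} to climb from $L^m$ to $L^\infty$ with $s$-uniform constants---is not what the paper does, and it is not clear it closes: the number of iterations needed to reach $L^{N/s}$ diverges like $|\log s|$, and tracking constants through that many steps is exactly the delicacy you flag without resolving. The paper bypasses the bootstrap entirely by evaluating the Euler--Lagrange equation \emph{pointwise}. For the $L^\infty$ bound it uses $\rho_s(0)=\|\rho_s\|_\infty$ (radial monotonicity) and the elementary estimate $\|K_{s/2}\ast h\|_\infty \le \alpha_s\|h\|_1+\beta_s\|h\|_\infty$, where the constants have the explicit asymptotics $\alpha_s/s\to \mathrm{const}$, $\beta_s\to 1$, $c_{N,s/2}/s\to 2/(N\omega_N)$; splitting the outer convolution in $\mathcal K_{s,p}(\rho_s)(0)$ over $B_1$ and $B_1^c$ then gives an inequality of the form $\tfrac{m'}{\chi}\rho_s(0)^{m-1}\le (1+o(1))\rho_s(0)^{p'-1}+o(1)$, which forces $\rho_s(0)$ bounded because $m-1>p'-1$. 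The support bound is obtained the same way at a point of $\partial B_{R_s}$: there the left side equals the Lagrange multiplier $\mathcal D_s$, and one uses $\limsup_{s\to 0}\mathcal D_s>0$ (which comes from testing $\mathcal F_{s,p}$ against $\rho_0$ and the Kurokawa approximation $K_{s/2}\ast\rho_0\to\rho_0$ in $L^{p'}$).

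For $m=p'$ your arguments have gaps beyond what you acknowledge. In the case $\chi<p$: even if one shows $\|\rho_s\|_{p'}\to 0$, this does \emph{not} yield $\|\rho_s\|_\infty\to 0$ for radially nonincreasing densities of fixed mass---one can superpose a tall spike of height $h_s\to\infty$ on a vanishing ball (mass and $L^{p'}$ contributions both negligible) atop a spreading profile carrying the mass. The paper instead runs the same pointwise argument at the origin as above; with $m-1=p'-1$ the inequality becomes $p/\chi\le 1+o(1)$ if $\rho_s(0)\not\to 0$, a contradiction since $\chi<p$. In the case $\chi>p$: $\|\rho_s\|_{p'}\to\infty$ together with radial monotonicity and $\|\rho_s\|_1=M$ does \emph{not} force $\rho_s\to M\delta_0$ (the same spike-plus-fixed-profile construction gives a counterexample). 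The paper shows directly that the support radius $R_s\to 0$, again by evaluating \eqref{ELintro2} at $\partial B_{R_s}$ and using $\mathcal D_s\to+\infty$ (which follows from $\mathcal F_{s,p}(\rho_s)\to-\infty$ and the identity linking $\mathcal D_s$ to the energy). Your scaling argument for $\inf_{\mathcal Y_M}\mathcal F_{s,p}\to\inf_{\mathcal Y_M}\mathcal F_0$ when $\chi<p$ also only gives the upper bound; the lower bound is obtained \emph{a posteriori} from $\|\rho_s\|_\infty\to 0$.
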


	Let us conclude this section with a discussion on possible further extensions and open problems. 
	First of all, uniqueness (up to translations) of stationary states of given mass (or of optimizers of the HLS inequality \eqref{ourHLS} up to the natural scaling)  would require a further, deep analysis. It has been proved in the case $p=2$ by different methods in \cite{CCH3, CLLS, CGHMV, DYY}, and each of them could be suitable for treating the nonlinear potential case as well. The stability result of the HLS inequality in \cite{CLLS} could also be potentially generalized to $p\neq2$.
	Second, radiality of \emph{every} stationary solution to \eqref{evolution}-\eqref{spintro} is not guaranteed. Such a property has been proven in \cite{CHMV} in the linear potential case $p=2$ under some  restrictions on $m,s$ (building on the result from \cite{CHVY} for $s=1$). It remains an open problem to extend such result for the case $p\neq 2$.  It would prove that all the steady states of the dynamics are actually radially decreasing.
 Moreover, it would also be interesting to investigate stationary states of the dynamics,  meant as   solutions to  \eqref{ELintro2}, in the regime $1<m\le(p_s^*)'$: in this range radially decreasing solutions are expected to exists only for $\mathcal Q=0$ and to be smooth, positive and vanishing at infinity, since this behavior has been proven for $p=2$ in \cite{CGHMV}.

	
	\section{Preliminaries}
	\subsection{Notation and functional framework} 
	The dimension of the ambient space $\mathbb{R}^N$ will be $N \ge 1$. For $x_0 \in \mathbb{R}^N$ and $r > 0$, the symbol $B_r(x_0)$ stands for the euclidean $N-$dimensional open ball 
	\[
	B_r(x_0) = \left\{ x \in \mathbb{R}^N : |x-x_0| < r\right\}. 
	\]
	As usual, we will denote with $|\,\cdot\,|$ the $N-$dimensional Lebesgue measure. For $1 \le p \le \infty$, the standard Lebesgue spaces are denoted by $L^p_{\rm loc}$ and $L^p$, and we will use the shortcut notation $\|\,\cdot\,\|_{p}$ for the $L^p(\mathbb R^N)$ norms. For an open set $\Omega \subseteq \mathbb{R}^N$, the notation $W^{1,p}(\Omega)$ and $BV(\Omega)$ stand respectively for the usual Sobolev space and the usual space of bounded variation functions on $\Omega$. 
	We use the following notation for the H\"older spaces   
	\[
	C^{0,\alpha}(\Omega) := \left\{u \in C^0(\Omega) \cap L^\infty(\Omega): \sup_{\underset{x \neq y}{x, y \in \Omega,}} \frac{|u(x) - u(y)|}{|x-y|^\alpha} < \infty \right\}.
	\]
	We say that $u \in C^{0,\alpha}_{\rm loc}(\Omega)$ if $u \in C^{0,\alpha}(\Omega')$ for every open set $\Omega'$ that is compactly contained in $\Omega$.  In particular $C^{0,1}(\Omega)$ is the space of bounded Lipschitz functions on $\Omega$.\KKK

	For every $\rho \in L^1(\mathbb{R}^N)$ and $\lambda > 0$, the {\it mass invariant dilation of $\rho$ by factor $\lambda$} is given by
	\eqref{def:dilation}.
	Since
	\begin{equation*} \label{eqn:invariant-norm}
		\|\rho\|_{1} = \|\rho^\lambda\|_{1}, 
	\end{equation*}
	if $\rho \in \mathcal{Y}_M$ then also $\rho^\lambda \in \mathcal{Y}_M$, for every $\lambda > 0$, where $\mathcal Y_M$ is defined by \eqref{YM}.

	With an abuse of notation, we will say that a radially symmetric function $\rho \in L^1(\mathbb{R}^N)$ is nonincreasing if its radial profile is nonincreasing. The {\it radially symmetric nonincreasing rearrangement} of a function $\rho \in L^1(\mathbb{R}^N)$ will be denoted by $\rho^*$. For the precise definition and its properties, we refer the reader to \cite[Chapter 3]{LL}.  We recall that the convolution among two nonnegative radially nonincreasing functions on $\mathbb R^N$ is still radially nonincreasing on $\mathbb R^N$, see \cite{Can}. In particular, if $f$ is radially nonincreasing nonnegative, so is $K_s\ast f$. \KKK
	
	The Fourier transform of the Riesz kernel $K_s$ defined in \eqref{rie}  is given by (see for example \cite[Lemma 1, Chapter V]{Steinbook} or also \cite[Theorem 2.8]{Mizuta} and \cite[Proposition 12.10]{Taheri-book}) 
	\begin{equation*} \label{eqn:fourier-riesz}
		\hat{K_s}(\xi) = |\xi|^{-2\,s}. 
	\end{equation*}
	 Moreover, for the normalization constant $c_{N, s}$ in \eqref{rie} we have the following limiting behavior
	\begin{equation} \label{eqn:asym-behav}
		\lim_{s \to 0} \frac{c_{N,s}}{s} = \frac{\Gamma\,\left(\frac{N}{2}\right)}{\pi^{\frac{N}{2}}} = \frac{2}{N\,\omega_N}.
	\end{equation}

	\subsection{Basics on Riesz potentials}
	We now recall some facts we will need throughout the whole paper. 
	
	\begin{lemma} \label{lm:bound-l-infinito-riesz}
		Let $1 \le q < r \le \infty$,  $0<s\,q < N$ and $s\,r > N$. For every $h \in L^q(\mathbb{R}^N) \cap L^r(\mathbb{R}^N)$ we have 
		\[
		\|K_{s/2} \ast h\|_\infty \le \alpha_s\,\|h\|_q + \beta_s\,\|h\|_r,
		\]
		for some positive constant $\alpha_s = \alpha(N, q, s)$ and $\beta_s = \beta(N, r, s) > 0$. Moreover, we have 
		\begin{equation} \label{asym-bound-l-infinito-riesz1}
			\lim_{s \to 0} \frac{\alpha_s}{s} = \begin{cases}
				\pi^{-\frac{N}{2}} \Gamma\left(\frac{N}{2}\right) \left(\omega_N(q-1)\right)^{\frac{q-1}{q}} \qquad &\mbox{ if } q > 1, \\
				\\
				\pi^{-\frac{N}{2}} \Gamma\left(\frac{N}{2}\right) \qquad &\mbox{ if } q=1,
			\end{cases}
		\end{equation}
		and 
		\begin{equation} \label{asym-bound-l-infinito-riesz2}
			\lim_{s \to 0} \beta_s = 1 \qquad \mbox{ if } r = \infty.
		\end{equation}
	\end{lemma}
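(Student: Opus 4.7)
The plan is a standard near-field/far-field splitting of the convolution. For a suitable radius $R>0$ depending only on $N,q,r,s$, I would first write
\[
K_{s/2}\ast h(x) \le \int_{|y|\le R}K_{s/2}(y)\,|h(x-y)|\,dy + \int_{|y|>R}K_{s/2}(y)\,|h(x-y)|\,dy
\]
and bound each piece by H\"older's inequality. The hypothesis $s\,r>N$ is precisely what guarantees $|y|^{s-N}\in L^{r'}_{\rm loc}(\mathbb{R}^N)$ (the radial exponent $(s-N)r'+N-1$ exceeds $-1$), so H\"older yields
\[
\int_{|y|\le R}K_{s/2}(y)\,|h(x-y)|\,dy \le \|K_{s/2}\|_{L^{r'}(B_R)}\,\|h\|_r =: \beta_s\,\|h\|_r.
\]
Symmetrically, $s\,q<N$ gives $|y|^{s-N}\in L^{q'}(\mathbb{R}^N\setminus B_R)$, and the far-field integral is bounded by $\|K_{s/2}\|_{L^{q'}(\mathbb{R}^N\setminus B_R)}\,\|h\|_q =: \alpha_s\,\|h\|_q$, with $\alpha_s,\beta_s$ independent of $h$. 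The borderline cases $q=1$ (where $q'=\infty$ and the far-field constant reduces to the essential supremum $c_{N,s/2}\,R^{s-N}$) and $r=\infty$ (where $r'=1$ and the near-field constant reduces to the $L^1(B_R)$ norm $c_{N,s/2}\,N\omega_N\,R^s/s$) are treated with the obvious modifications.

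For the asymptotics, I would compute $\alpha_s,\beta_s$ in closed form via polar coordinates and pass to the limit through \eqref{eqn:asym-behav}, which gives $c_{N,s/2}/s\to 1/(N\omega_N)$ as $s\to 0$. Since $R^s\to 1$ for any fixed $R>0$, the case $r=\infty$ immediately yields $\beta_s = c_{N,s/2}\,N\omega_N\,R^s/s\to 1$. For $q>1$, the factor $\bigl((N-s)q'-N\bigr)^{-1/q'}$ appearing in the closed form of $\alpha_s$ tends to $\bigl((q-1)/N\bigr)^{(q-1)/q}$, which combined with the other factors produces $(\omega_N(q-1))^{(q-1)/q}$; the case $q=1$ follows at once from the explicit expression $\alpha_s/s = c_{N,s/2}R^{s-N}/s$. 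Rewriting $1/(N\omega_N)$ via the identity $\omega_N=\pi^{N/2}/\Gamma(N/2+1)$ then recasts the limiting constants in the claimed form involving $\pi^{-N/2}\Gamma(N/2)$.

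The main obstacle is essentially computational: calibrating the truncation radius $R$ (possibly using different values for the two pieces of the decomposition, which is legitimate since the two inequalities can be derived independently from coverings of $\mathbb{R}^N$) so that the limiting constants match the claimed form exactly rather than up to an explicit numerical factor. Once $R$ is fixed appropriately, the rest is algebraic bookkeeping with powers of $q-1$ and $\Gamma$ functions.
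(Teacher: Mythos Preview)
Your proposal is correct and essentially identical to the paper's proof: split the convolution at the unit ball, apply H\"older's inequality to each piece (with the obvious modifications when $q=1$ or $r=\infty$), and read off the asymptotics from \eqref{eqn:asym-behav}. The paper simply fixes $R=1$ from the outset, so the calibration you flag as the main obstacle is not actually needed---the constants come out in the stated form directly.
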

	\begin{proof}
		{\it Case $q>1$.} Our assumptions imply that \[
		q' > \frac{N}{N-s} \qquad \mbox{ and } \qquad r' < \frac{N}{N-s}.
		\]
		Then, for every $x \in \mathbb{R}^N$, H\"older's inequality yields 
		\begin{equation} \label{stima-holder-potenziale}
			\begin{split}
				\frac{(K_{s/2} \ast h)(x)}{c_{N, s/2}} &=  \int_{\mathbb{R}^N \setminus B_1(x)} \frac{h(y)}{|x-y|^{N-s}}\,dy + \int_{B_1(x)} \frac{h(y)}{|x-y|^{N-s}}\,dy \\
				&\le \left(\int_{\mathbb{R}^N \setminus B_1} \frac{dy}{|y|^{(N-s)\,q'}}\right)^{\frac{1}{q'}}\,\|h\|_q +  \left(\int_{B_1} \frac{dy}{|y|^{(N-s)\,r'}}\right)^{\frac{1}{r'}}\,\|h\|_r \\
				&= \left(\frac{N\,\omega_N}{(N-s)\,q' - N}\right)^{\frac{1}{q'}}\|h\|_q + \left(\frac{N\, \omega_N}{N - (N-s)\,r'}\right)^{\frac{1}{r'}}\|h\|_r , 
			\end{split}
		\end{equation}
		which gives the desired conclusion with 
		\[
		\alpha_s = \alpha\left(N,s,q\right) := c_{N,s/2} \left(\frac{N\omega_N}{(N-s)\,q' - N}\right)^{\frac{1}{q'}}, \]\[ \beta_s = \beta\left(N,s,r\right) :=c_{N, s/2} \left(\frac{N \omega_N}{N - (N-s)\,r'}\right)^{\frac{1}{r'}}.
		\]
		By recalling \eqref{eqn:asym-behav}, we get  the claimed asymptotic behaviors \eqref{asym-bound-l-infinito-riesz1}-\eqref{asym-bound-l-infinito-riesz2} for $\alpha_s$ and $\beta_s$.  
		\vskip.2cm \noindent
		{\it Case $q=1$.} We pass to the limit for $q \searrow 1$ in \eqref{stima-holder-potenziale} obtaining 
		\[
		\frac{(K_{s/2} \ast h)(x)}{c_{N, s/2}} \le  \|h\|_1 + \left(\frac{N\,\omega_N}{N - (N-s)\,r'}\right)^{\frac{1}{r'}}\|h\|_r.
		\]
		This yields our claimed estimate with $\alpha_s = c_{N, s/2} $ and $\beta_s$ as before. By recalling \eqref{eqn:asym-behav}, we eventually get the desired asymptotic behaviors. 
	\end{proof}
	
	Next we introduce the {\it Hardy-Littlewood-Sobolev type inequalities} that are crucial in this work. \KKK
	
	\begin{lemma}[Hardy-Littlewood-Sobolev type inequality] \label{lm:hls}
		Let $1 < q < \infty$ and  $0<s\,q < N$. For every $h \in L^q(\mathbb{R}^N)$, we have  
		$$
		K_{s/2} \ast h \in L^{q^*_s}(\mathbb{R}^N), \qquad \mbox{ where }\,\, q^*_s = \frac{N\,q}{N-s\,q}.
		$$
		More precisely, there exists a sharp constant $\overline{H}_s = \overline{H}(N, s, q) > 0$ such that 
		\begin{equation} \label{hls-inequality-con-asym} 
			\|K_{s/2} \ast h\|_{q^*_s} \le \overline{H}_s\,\|h\|_{q}, \qquad \mbox{ with }\,\, \limsup_{s \to 0} \overline{H}_s \le 1.
		\end{equation}
		In particular, for every $h \in L^1(\mathbb{R}^N) \cap L^m(\mathbb{R}^N)$, $m>q$, we have
		\begin{equation} \label{ineq:hls}
			\|K_{s/2} \ast h\|_{q^*_s} \le  \overline{H}_s\,\|h\|_1^{\vartheta}\,\|h\|^{1- \vartheta}_m,	\qquad \mbox{ where }\,\, \vartheta = \frac{1}{q}\frac{m-q}{m-1}.
		\end{equation}
	\end{lemma}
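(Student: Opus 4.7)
The lemma breaks into three essentially independent pieces, which I would handle in turn. The $L^q \to L^{q^*_s}$ boundedness of $h \mapsto K_{s/2}\ast h$ together with the existence of the sharp constant $\overline{H}_s$ is the classical Hardy-Littlewood-Sobolev inequality applied to the Riesz potential of order $s$; I would invoke it from \cite[Theorem~4.3]{LL} or \cite[Chapter~V]{Steinbook}, with our normalization $c_{N,s/2}$ absorbed into $\overline{H}_s$. The interpolated inequality \eqref{ineq:hls} is then immediate from the main one by H\"older's log-convexity of $L^p$ norms: the unique $\vartheta$ with $1/q = \vartheta + (1-\vartheta)/m$ is exactly $\vartheta = (m-q)/(q(m-1))$, yielding $\|h\|_q \le \|h\|_1^\vartheta\,\|h\|_m^{1-\vartheta}$.

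The real content of the lemma is the asymptotic $\limsup_{s\to 0}\overline{H}_s \le 1$. I would establish it via Stein's complex interpolation theorem applied to the analytic family $T_z h := K_{z/2}\ast h$ on the strip $\{0\le\Re z\le s_0\}$ for some fixed $s_0 \in (0, N/q)$. At the left edge $\Re z = 0$, the Fourier symbol $|\xi|^{-it}$ has unit modulus, so $T_{it}$ is a Mihlin-H\"ormander multiplier on $L^q$ with operator norm growing at most polynomially in $|t|$, and crucially $\|T_0\|_{L^q\to L^q} = 1$. At the right edge $\Re z = s_0$, the pointwise bound $|K_{(s_0+it)/2}(x)| = |c_{N,(s_0+it)/2}|\,|x|^{s_0-N}$ combined with classical HLS at order $s_0$ gives boundedness $T_{s_0+it}\colon L^q \to L^{q^*_{s_0}}$ with polynomial growth in $|t|$, the polynomial factor coming from Stirling-type asymptotics of the Gamma functions inside $|c_{N,(s_0+it)/2}|$. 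Stein's theorem then produces, for $s \in (0, s_0)$,
\[
\log\overline{H}_s \,\le\, \int_{\mathbb R}\log\|T_{it}\|_{L^q\to L^q}\,d\mu_0(s,t) \,+\, \int_{\mathbb R}\log\|T_{s_0+it}\|_{L^q\to L^{q^*_{s_0}}}\,d\mu_1(s,t),
\]
with $\mu_0(s,\cdot),\mu_1(s,\cdot)$ the harmonic measures of the point $s$ on the two sides of the strip. As $s \to 0^+$, $\mu_0(s,\cdot)$ concentrates at $t = 0$ where $\log\|T_0\|_{L^q\to L^q} = 0$, so the first integral tends to $0$; $\mu_1(s,\cdot)$ has total mass $s/s_0 \to 0$ against exponentially decaying tails, so the second integral also vanishes. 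Hence $\limsup_{s\to 0}\overline{H}_s \le 1$.

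The main obstacle is precisely this asymptotic: one needs to exploit the fact that $T_0 = I$ has operator norm exactly $1$, while the target Lebesgue exponent $q^*_s$ varies with the parameter. An elementary route through Hedberg's pointwise estimate yields $K_{s/2}\ast f(x) \le C_s\,\|f\|_q^{sq/N}\,(Mf(x))^{1-sq/N}$ with $C_s \to 1$ via \eqref{eqn:asym-behav}, but integrating and bounding $\|Mf\|_q$ by the Hardy-Littlewood maximal inequality on $L^q$ leaves a factor strictly greater than $1$, losing the sharp constant in the limit. The Stein interpolation route bypasses this because it isolates a factor of $1$ from the left boundary and raises the right-boundary operator norm to the power $s/s_0 \to 0$.
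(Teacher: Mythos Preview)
Your plan for the basic HLS inequality and the interpolated version \eqref{ineq:hls} matches the paper: both invoke \cite[Theorem~4.3]{LL} by duality and then use H\"older (log-convexity of $L^p$ norms).

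The asymptotic $\limsup_{s\to 0}\overline H_s\le 1$ is where you diverge, and your Stein interpolation route has a genuine gap. You need
\[
\int_{\mathbb R}\log M_0(t)\,d\mu_0(s,t)\to 0\qquad\text{as }s\to 0,
\]
and since $\mu_0(s,\cdot)\to\delta_0$ weakly while $\log M_0(t)\ge 0$ with $\log M_0(0)=0$, this forces $M_0(t)=\||\xi|^{-it}\|_{L^q\to L^q}\to 1$ as $t\to 0$. You assert this implicitly but do not justify it, and it is \emph{not} a consequence of Mihlin: the Mihlin--H\"ormander theorem gives $M_0(t)\le C_q(1+|t|)^k$ with $C_q>1$ for $q\neq 2$, so letting $t\to 0$ only yields $M_0(t)\le C_q$, and that constant survives the Poisson integral in the limit. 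Strong continuity of the $C_0$-group $t\mapsto T_{it}$ on $L^q$ does not give norm continuity either. In fact your own diagnosis of the Hedberg route---that the maximal operator has $L^q\to L^q$ norm strictly larger than $1$, which contaminates the limit---applies verbatim here: the Mihlin constant plays exactly the same role.

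The paper's argument is far more elementary and sidesteps this entirely. It simply reads off the explicit (non-sharp) upper bound on the HLS constant from \cite[Theorem~4.3(1)]{LL}, so that
\[
\overline H_s \le c_{N,s/2}\cdot\frac{N}{s}\,\omega_N^{(N-s)/N}\,\frac{1}{q\,(q^*_s)'}\left(\Big(\tfrac{(N-s)/N}{1-1/(q^*_s)'}\Big)^{(N-s)/N}+\Big(\tfrac{(N-s)/N}{1-1/q}\Big)^{(N-s)/N}\right),
\]
and then checks by direct computation, using $c_{N,s/2}/s\to 1/(N\omega_N)$ from \eqref{eqn:asym-behav} and $(q^*_s)'\to q'$, that the right-hand side tends to $\frac{1}{\omega_N}\cdot\omega_N\cdot\frac{1}{qq'}\cdot(q+q')=1$. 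No analytic families, no boundary operator norms to control.
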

	\begin{proof}
		Inequality \eqref{hls-inequality-con-asym} follows by using in duality the well-known {\it Hardy-Littlewood-Sobolev Inequality} \cite[Theorem 4.3]{LL}. Indeed, with the notation therein used, if we plug the following 
		\[
		r:= q, \quad \lambda := N-s \quad \mbox{ and so } \quad p:= (q^*_s)',
		\]
		we get that
		\begin{equation*} 
			\begin{split}
				\int_{\mathbb{R}^N} \varphi\,\left(K_{s/2} \ast h\right)\,dx = c_{N, s/2}\,\iint_{\mathbb{R}^N \times \mathbb{R}^N} \frac{\varphi(x)\,h(y) }{|x-y|^{N-s}}\,dxdy
				\le C\left(N,q, s\right)\,c_{N, s/2}\,\|h\|_{q}, 
			\end{split}
		\end{equation*}
		 for every $\varphi \in L^{(q_s^*)'}(\mathbb{R}^N)$ with $\|\varphi\|_{(q^*_s)'} = 1,$ which allows to conclude. The constant $C\left(N, q, s\right)$ denotes the sharp constant of \cite[Theorem 4.3(1)]{LL} and as shown therein we have 
		\begin{equation*} \label{costante-lieb}
			\begin{split}
				\overline{H}_s &= \overline{H}\left(N, q, s\right):= C\left(N, q, s\right)\,c_{N, s/2} \\
				&\le  c_{N, s/2}\,\frac{N}{s}\,\omega_N^{(N-s)/N}\,\frac{1}{q\,(q^*_s)' }\, \left(\left(\frac{(N-s)/N}{1-1/(q^*_s)'}\right)^{(N-s)/N} + \left(\frac{(N-s)/N}{1-1/q}\right)^{(N-s)/N}\right).
			\end{split}
		\end{equation*}
		By \eqref{eqn:asym-behav}, we infer that $$\limsup_{s \to 0} \overline{H}_s \le 1.$$ Eventually, by the interpolation inequality in $L^p-$spaces, we also get \eqref{ineq:hls}.
	\end{proof}
	
	For our purposes, it will be convenient to rewrite \eqref{hls-inequality-con-asym} and \eqref{ineq:hls} with $q$ replaced by $(p_s^*)'$, given that $p$ is the nonlinear Riesz potential exponent appearing in functional $\mathcal{F}_{s,p}$. 
	It reads as follows  
	\begin{corollary} \label{cor:hls}
		Let $1 < p < \infty$ and  $0<s\,p < N$. We have
			\begin{equation} \label{ineq:hls-particolare}
			\|K_{s/2} \ast h\|_{p'} \le H_s\,\|h\|_{(p^*_s)'} \quad \mbox{ for every } h \in L^{(p^*_s)'}(\mathbb{R}^N),
		\end{equation} 
		where the sharp constant $H_s  = H\left(N, s, p\right) > 0$ satisfies
		\begin{equation} \label{bound-costante-hls}
			H_s \le c_{N, s/2}\,\frac{N}{s}\,\omega_N^{(N-s)/N}\,\frac{1}{p\,(p^*_s)' }\, \left(\left(\frac{(N-s)/N}{1-1/p}\right)^{(N-s)/N} + \left(\frac{(N-s)/N}{1-1/(p^*_s)'}\right)^{(N-s)/N}\right),
		\end{equation} 
		in particular
			$\limsup_{s \to 0} H_s \le 1.$
		Moreover, if $m > (p^*_s)'$ we have
		\begin{equation} \label{ineq-hls-2}
			\|K_{s/2} \ast h\|_{p'} \le H_s\,\|h\|_1^{\vartheta_0} \,\|h\|_m^{1-\vartheta_0} \quad \mbox{ for every } h \in L^{1}(\mathbb{R}^N) \cap L^m(\mathbb{R}^N),
		\end{equation}
		where $0 < \vartheta_0 < 1$ is given by
		\begin{equation} \label{esponente-interpol}
			\vartheta_0 = \vartheta_0(m, N, p, s) :=  \frac{1}{(p^*_s)'}\,\frac{m - (p^*_s)'}{m-1} = 1 -\frac{m'}{p^*_s}.
		\end{equation}
	\end{corollary}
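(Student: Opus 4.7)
The plan is to obtain this corollary as a direct specialization of Lemma \ref{lm:hls} by choosing the exponent $q = (p_s^*)'$. First I would verify that all the hypotheses of that lemma are met: from $1 < p < \infty$ and $0 < sp < N$, the conjugate $(p_s^*)'$ satisfies $1 < (p_s^*)' < \infty$ and $s\,(p_s^*)' < N$, both reducing to the elementary inequalities $p(N+s) > N$ and $N(p-1) > 0$. The key algebraic check is that the Sobolev exponent $q^*_s$ produced by Lemma \ref{lm:hls} equals exactly $p'$: a direct computation gives
\[
N - s\,(p_s^*)' \,=\, N - \frac{s\,N\,p}{N(p-1)+s\,p} \,=\, \frac{N^{2}(p-1)}{N(p-1)+s\,p},
\]
so that $((p_s^*)')^{*}_s = p'$, and hence $(q^*_s)' = p$ while $q' = p_s^*$. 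Plugging this into Lemma \ref{lm:hls} yields \eqref{ineq:hls-particolare} with $H_s := \overline{H}(N, s, (p_s^*)')$.

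Next I would obtain the explicit estimate \eqref{bound-costante-hls} simply by substituting $q = (p_s^*)'$ and $(q_s^*)' = p$ into the upper bound for $\overline{H}_s$ produced in the proof of Lemma \ref{lm:hls}: the factor $1/(q(q^*_s)')$ becomes $1/(p\,(p_s^*)')$, and the two terms inside the big parenthesis become those displayed in \eqref{bound-costante-hls}. The asymptotic $\limsup_{s\to 0} H_s \le 1$ is inherited verbatim from \eqref{hls-inequality-con-asym}.

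Finally, for the interpolated version \eqref{ineq-hls-2}, I would combine \eqref{ineq:hls-particolare} with the standard log-convexity of $L^p$ norms. Since $m > (p_s^*)' > 1$, for every $h \in L^1(\mathbb{R}^N) \cap L^m(\mathbb{R}^N)$ one has
\[
\|h\|_{(p_s^*)'} \,\le\, \|h\|_1^{\vartheta_0}\,\|h\|_m^{1-\vartheta_0},
\]
where $\vartheta_0 \in (0,1)$ is determined by $1/(p_s^*)' = \vartheta_0 + (1-\vartheta_0)/m$. Solving this linear equation gives
\[
\vartheta_0 \,=\, \frac{1/(p_s^*)' - 1/m}{1 - 1/m} \,=\, \frac{1}{(p_s^*)'}\,\frac{m-(p_s^*)'}{m-1},
\]
which agrees with \eqref{esponente-interpol}; the identity $\vartheta_0 = 1 - m'/p_s^*$ then follows from the conjugate exponent relation $(p_s^*)' - 1 = (p_s^*)'/p_s^*$, giving $1-\vartheta_0 = m/((m-1)p_s^*) = m'/p_s^*$.

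I do not foresee any real obstacle here: the entire argument is bookkeeping with conjugate and critical exponents, and the substantive analytic content is already contained in Lemma \ref{lm:hls}. The only point demanding a bit of care is the algebraic identification $((p_s^*)')^*_s = p'$, which I would present as an explicit one-line computation to make the otherwise opaque relabeling transparent to the reader.
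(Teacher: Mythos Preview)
Your proposal is correct and follows essentially the same route as the paper: set $q=(p_s^*)'$, check that Lemma \ref{lm:hls} applies with $q_s^*=p'$, read off \eqref{ineq:hls-particolare} and \eqref{bound-costante-hls}, and then interpolate to get \eqref{ineq-hls-2}. The paper's own proof is terser (it simply states $(p_s^*)'\in(1,N/s)$ and $q_s^*=p'$ without the explicit computations you provide), but the logical structure is identical.
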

	
	\begin{proof} 
		We have 
		\[
		(p^*_s)' = \frac{N\,p}{N\,(p-1) + s\,p} \in \left(1,\,N/s\right),
		\]
		thus the exponent $q := (p^*_s)'$ satisfies the assumptions of Lemma \ref{lm:hls}. Since $q^*_s = p'$, inequality \eqref{ineq:hls} can be rewritten as \eqref{ineq:hls-particolare}
		where the sharp constant $H_s  = H\left(N, s, p\right) := \overline{H}(N,s,(p^*_s)') > 0$ satisfies \eqref{bound-costante-hls}. By the interpolation inequality in $L^p-$spaces we also get \eqref{ineq-hls-2}. 
	\end{proof}
		The following theorem is due to Kurokawa. It will be used in Section \ref{sec:asymptotics} to establish convergence results for minimizers of $\mathcal{F}_{s,p}$ as $s \to 0$, see Theorem \ref{thm:convergenza-forte-minimizzanti} and Proposition \ref{prop:gamma-convergenza}. We recall its elegant proof below. 
	\begin{theorem}[\cite{Kurokawa}] \label{thm:kurokawa}
		Let $1 < q < p$. For every $h \in L^q(\mathbb{R}^N) \cap L^p(\mathbb{R}^N)$, we have
		\[
		\lim_{s \to 0}\|K_{s/2} \ast h - h\|_{p} = 0.
		\] 
	\end{theorem}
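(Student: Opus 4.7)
The plan is to view $K_{s/2}$ as an ``almost'' approximate identity in the limit $s \to 0$, consistent with the Fourier-side picture $|\xi|^{-s} \to 1$. The key obstruction is that $K_{s/2}$ is not integrable at infinity, so the standard approximate identity argument must be supplemented by a separate treatment of the tail. I would therefore split
$$K_{s/2} = \phi_s + \psi_s, \qquad \phi_s := K_{s/2}\,\chi_{B_1(0)}, \qquad \psi_s := K_{s/2}\,\chi_{\mathbb{R}^N \setminus B_1(0)},$$
and estimate $\|\phi_s \ast h - h\|_p$ and $\|\psi_s \ast h\|_p$ separately.

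For the far part, I would apply Young's convolution inequality with exponents determined by $1 + 1/p = 1/a + 1/q$. Since $q < p$ we get $a > 1$, and a direct computation in spherical coordinates gives
$$\|\psi_s\|_a^a = c_{N, s/2}^a\,\frac{N\,\omega_N}{(N-s)\,a - N},$$
which is finite as soon as $s$ is small enough that $a > N/(N-s)$. Combined with $c_{N,s/2} \to 0$ from \eqref{eqn:asym-behav}, this yields $\|\psi_s\|_a \to 0$, hence $\|\psi_s \ast h\|_p \le \|\psi_s\|_a\,\|h\|_q \to 0$. It is precisely here that the assumption $h \in L^q$ with $q<p$ enters.

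For the near part, I would verify that the normalized function $\widetilde \phi_s := \phi_s/c_s$, with $c_s := \int_{\mathbb R^N} \phi_s = c_{N,s/2}\,N\omega_N/s$, is a bona fide approximate identity. By \eqref{eqn:asym-behav}, $c_s \to 1$; moreover $\widetilde \phi_s \ge 0$ is supported in $B_1$, has unit mass, and for every $\delta \in (0,1)$ satisfies $\int_{|x|>\delta} \widetilde \phi_s = 1 - \delta^s \to 0$. The standard $L^p$ approximate identity argument based on continuity of translations in $L^p$ (valid because $p < \infty$) then gives $\widetilde \phi_s \ast h \to h$ in $L^p$. Writing
$$\phi_s \ast h - h = c_s\,(\widetilde \phi_s \ast h - h) + (c_s - 1)\,h,$$
and using the trivial bound $|c_s - 1|\,\|h\|_p \to 0$, I conclude $\phi_s \ast h \to h$ in $L^p$.

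The main obstacle is really the non-integrability of $K_{s/2}$ at infinity: once the splitting $K_{s/2}=\phi_s+\psi_s$ is in place and \eqref{eqn:asym-behav} is available, the remaining arguments are routine. The role of the hypothesis $q<p$ is simply to keep the Young exponent $a$ strictly above $1$, which for small $s$ places it above $N/(N-s)$ and forces $\|\psi_s\|_a$ both to be finite and to vanish as $s\to 0$.
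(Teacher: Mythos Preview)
Your proof is correct and follows the same overall architecture as the paper's: split $K_{s/2}$ into a near part on a ball and a far part on its complement, treat the near part as an approximate identity using $c_{N,s/2}\,N\omega_N/s\to 1$ and continuity of translations in $L^p$, and show the far part contributes a vanishing $L^p$ error. The one substantive difference is in the tail estimate. The paper fixes $0<\beta<N(1/q-1/p)$ and dominates $|y|^{s-N}\le \delta^{-\beta}|y|^{\beta-N}$ on $\{|y|\ge\delta\}$, then invokes the Hardy--Littlewood--Sobolev inequality for the \emph{fixed} kernel $K_{\beta/2}$ together with $c_{N,s/2}\to 0$. You instead compute $\|\psi_s\|_a$ directly and apply Young's inequality with $1+1/p=1/a+1/q$; since $q<p$ forces $a>1$, for small $s$ one has $a>N/(N-s)$ and $\|\psi_s\|_a\to 0$. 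Your route is more elementary (it avoids HLS altogether) and makes the role of the hypothesis $q<p$ especially transparent; the paper's route stays within the Riesz-potential toolkit already developed in the preceding lemmas. Both arrive at the same conclusion with comparable effort.
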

	\begin{proof}
		Since $h \in L^p(\mathbb{R}^N)$, for every $\varepsilon > 0$ we can find $0 < \delta < 1$ such that 
		\begin{equation} \label{kurokawa1}
			\int_{\mathbb{R}^N} |h(x- y) - h(x)|^p\,dx < \varepsilon, \quad \mbox{ for } |y| < \delta,
		\end{equation}
		see for instance \cite[Proposition 17.1]{DiB}. We set 
		\[
		K_{s/2}(x) = c_{N, s/2}\,|x|^{s-N}\,1_{B_\delta}(x) + c_{N, s/2}\,|x|^{s-N}\,1_{B_\delta^c}(x) =: K_{s/2}^0(x) + K_{s/2}^\infty(x), 
		\]
		so we have 
		\begin{equation} \label{kurokawa-triangolo}
			\|K_{s/2} \ast h - h\|_p \le \|K_{s/2}^0 \ast h - h\|_p + \|K^\infty_{s/2} \ast h\|_p. 
		\end{equation}
		For the first addendum, since
		\begin{equation} \label{kurokawa2}
			\int_{B_\delta} |y|^{s-N}\,dy = N\,\omega_N\,\int_0^\delta \varrho^{s-1}\,d\varrho = N\,\omega_N\,\frac{\delta^s}{s},
		\end{equation}
		 by adding and subtracting $c_{N, s/2} \int_{B_\delta} |y|^{s-N} h(x)\,dy$, we get 
		\[
		\begin{split}
			\|K_{s/2}^0 \ast h - h\|_p \le c_{N, s/2}&\left(\int_{\mathbb{R}^N} \left|\int_{B_\delta} |y|^{s-N}\left(h(x- y) - h(x)\right)dy \right|^p dx\right)^{\frac{1}{p}}\\
			  &+ \left(c_{N, s/2} - \frac{s}{N\omega_N\delta^s}\right)\left(\int_{\mathbb{R}^N} \left|\int_{B_\delta} |y|^{s-N}h(x)dy\right|^pdx\right)^{\frac{1}{p}}.
		\end{split}
		\]
		By Minkowski's integral inequality, \eqref{kurokawa1} and again \eqref{kurokawa2} we get 
		\[
		\|K_{s/2}^0 \ast h - h\|_p \le N\,\omega_N\,\frac{\delta^s}{s}\,\left[c_{N, s/2}\,\varepsilon + \left(c_{N, s/2} - \frac{s}{N\,\omega_N\,\delta^s}\right)\|h\|_p\right].
		\] 
		By \eqref{eqn:asym-behav}, we then obtain 
		\begin{equation} \label{primo-pezzo}
			\limsup_{s \to 0} \|K_{s/2}^0 \ast h - h\|_p \le \varepsilon. 
		\end{equation}
		To estimate the second term in the right hand side of  \eqref{kurokawa-triangolo} for small $s$, we take $\beta, r$ so that 
		\begin{equation} \label{ass-kurokawa}
			0 < \beta < N\left(\frac{1}{q} - \frac{1}{p}\right) \quad \mbox{ and } \quad r^*_\beta = p,
		\end{equation}
		in particular $q < r < p$. Without loss of generality, we can assume that $0 < s < \beta$ and observe that 
		\[
		|y|^{s-N} = |y|^{(s-\beta) + (\beta - N)} \le \delta^{s-\beta}\,|y|^{\beta - N} \le \delta^{-\beta}\,|y|^{\beta - N}, \quad \mbox{ for } |y| \ge \delta,
		\]
		being $0 < \delta < 1$. This entails that 
		\[
		\begin{split}
			\|K_{s/2}^\infty \ast h\|_p = c_{N, s/2}\,\left(\int_{\mathbb{R}^N} \left|\int_{B^c_\delta} |y|^{s-N}\,h(x-y)\,dy\right|^pdx\right)^\frac{1}{p}
			\le c_{N, s/2}\,\delta^{-\beta}\,\left\|K_{\beta/2} \ast |h|\right\|_p. 
		\end{split}
		\]
		Since $h \in L^r(\mathbb{R}^N)$, by  \eqref{ass-kurokawa} and by the Hardy-Littlewood-Sobolev inequality \eqref{hls-inequality-con-asym} we then obtain that $\left\|K_{\beta/2} \ast |h|\right\|_p < \infty$ and so
		\[
		\lim_{s \to 0} \|K_{s/2}^\infty \ast h\|_p = 0,
		\] 
		by \eqref{eqn:asym-behav}. By spending this information and \eqref{primo-pezzo} in \eqref{kurokawa-triangolo}, we get the desired conclusion. 
	\end{proof}
	\section{Extremals of the HLS type inequality}

	\subsection{Existence: the Lieb-Oxford method} 
	In this section, we discuss the existence of extremals of the Hardy-Littlewood-Sobolev type inequality \eqref{ineq-hls-2}. 
	\vskip.2cm \noindent
	Let $1 < p < \infty$ and  $0<s\,p < N$. Let $m > (p^*_s)'$ and $\vartheta_0 = 1 - m'/p^*_s$. The following quantity 
	\begin{equation} \label{extremal-functional-hls}
		H^*_{m, s}:= \sup\left\{\frac{\|K_{s/2} \ast h\|_{p'}^{p'}}{\|h\|_1^{p'\vartheta_0} \|h\|^{p'(1-\vartheta_0)}_{m}}: h \in L^1(\mathbb{R}^N) \cap L^m(\mathbb{R}^N) \setminus \{0\}\right\},
	\end{equation}
 is the sharp constant in the Hardy-Littlewood-Sobolev type inequality \eqref{ineq-hls-2} raised to the power $p'$.
	By \eqref{bound-costante-hls} this quantity is finite, since we have $H^*_{m,s} \le H_s^{p'}.$
	\begin{remark} \label{rmk:invariant-by-dilations}
		The quotient definining $H^*_{m, s}$ given by
		\[
		h \mapsto \frac{\|K_{s/2} \ast h\|_{p'}^{p'}}{\|h\|_1^{p'\vartheta_0} \|h\|^{p'(1-\vartheta_0)}_{m}}, \qquad h \in  L^1(\mathbb{R}^N) \cap L^m(\mathbb{R}^N) \setminus \{0\},
		\]
		is {\it invariant} under the actions of the two families of trasformations 
		\[
		\mu \mapsto \mu\,h,\,\, \mbox{ for } \mu > 0, \qquad \mbox{ and } \qquad \lambda \mapsto \lambda^N\,h(\lambda\,x),\,\, \mbox{ for } \lambda > 0. 
		\]
		This property will be crucially exploited in Lemma \ref{lm:extramls-hls} below. 
	\end{remark}

	We now infer the existence of extremals for \eqref{extremal-functional-hls} by using a classical argument by Lieb and Oxford (see for instance \cite[Appendix A]{LiebOxford} and \cite[Theorem 2.5]{Lieb83}).   
	\begin{lemma}[Existence of extremals] \label{lm:extramls-hls}
		Let $1 < p < \infty$ and  $0<s\,p < N$. Let $m > (p^*_s)'$. There exists a radially symmetric and nonincreasing function $h_s \in L_{+}^1(\mathbb{R}^N) \cap L^m(\mathbb{R}^N)$ realizing the supremum in \eqref{extremal-functional-hls} and satisfying $\|h_{s}\|_1 = \|h_s\|_m =1$. \par 
		Moreover, every function $h_s \in L_{+}^1(\mathbb{R}^N) \cap L^m(\mathbb{R}^N)$ attaining the supremum in \eqref{extremal-functional-hls} is such that $h_s = h^*_s(\cdot - y)$, for some $y \in \mathbb{R}^N$. 
	\end{lemma}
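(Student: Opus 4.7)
The plan is to adapt the Lieb-Oxford method from \cite{LiebOxford, Lieb83, BCL}. First I will reduce to nonnegative radially decreasing sequences: writing
\[
\|K_{s/2} \ast h\|_{p'}^{p'} = \int_{\mathbb R^N} (K_{s/2} \ast h)^{p'-1}\,(K_{s/2} \ast h)\,dx
\]
and applying Riesz's rearrangement inequality (using that $K_{s/2}$ is itself symmetric decreasing and that $(u^{p'-1})^* = (u^*)^{p'-1}$ for $u \ge 0$) followed by H\"older's inequality yields $\|K_{s/2} \ast h\|_{p'} \le \|K_{s/2} \ast |h|^*\|_{p'}$; since $\|h\|_q = \||h|^*\|_q$ for $q\in\{1,m\}$, the quotient in \eqref{extremal-functional-hls} does not decrease under the substitution $h \mapsto |h|^*$. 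By Remark \ref{rmk:invariant-by-dilations} the quotient is also invariant under the two scalings $h \mapsto \mu\,h$ and $h \mapsto \lambda^N h(\lambda\,\cdot)$, and I will use these two parameters to normalize a maximizing sequence $\{h_n\}$ of nonnegative radially nonincreasing functions so that $\|h_n\|_1 = \|h_n\|_m = 1$.

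Next, the radial monotonicity combined with the unit normalizations gives the uniform pointwise envelope $h_n(x) \le \Phi(x) := C\min(|x|^{-N},\,|x|^{-N/m})$, and one checks that $\Phi \in L^q(\mathbb R^N)$ for every $1 < q < m$ (integrable near the origin because $N/m < N$, and at infinity because $Nq > N$ for $q>1$). Helly's selection theorem applied to the radial profiles then produces a subsequence, not relabeled, converging pointwise almost everywhere to some radially nonincreasing $h_s \ge 0$ with $h_s \le \Phi$. Since $(p_s^*)'\in(1,m)$ by assumption, dominated convergence gives $h_n \to h_s$ in $L^{(p_s^*)'}(\mathbb R^N)$; the linear HLS inequality \eqref{ineq:hls-particolare} applied to $h_n - h_s$ then upgrades this to $K_{s/2} \ast h_n \to K_{s/2} \ast h_s$ strongly in $L^{p'}(\mathbb R^N)$, so $\|K_{s/2} \ast h_n\|_{p'}^{p'} \to \|K_{s/2} \ast h_s\|_{p'}^{p'}$. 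The left-hand side tends to $H^*_{m,s} > 0$ by the maximizing property, which forces $h_s \not\equiv 0$. Fatou's lemma then gives $\|h_s\|_1 \le 1$ and $\|h_s\|_m \le 1$, and if either inequality were strict the quotient evaluated at $h_s$ would strictly exceed $H^*_{m,s}$, contradicting the supremum. Thus $\|h_s\|_1 = \|h_s\|_m = 1$ and $h_s$ is the desired extremal.

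For the moreover part, given any nonnegative maximizer $h$ the rearrangement argument above reads $\|K_{s/2} \ast h\|_{p'} \le \|K_{s/2} \ast h^*\|_{p'}$, and must be an equality. Since $K_{s/2}$ is \emph{strictly} radially decreasing, the strict form of Riesz's rearrangement inequality (see \cite[Chapter~3]{LL}) will force $h = h^*(\,\cdot\,-y)$ for some $y \in \mathbb R^N$. The hard part throughout is the loss of compactness induced by the two-parameter scaling invariance of the quotient: without simultaneously pinning down both $\|h_n\|_1$ and $\|h_n\|_m$, a maximizing sequence could concentrate to a point or spread to infinity and produce a trivial pointwise limit. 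Fixing both norms eliminates both free scales at once, and the radial monotonicity then supplies the envelope $\Phi \in L^{(p_s^*)'}(\mathbb R^N)$ that drives the strong convergence of the convolutions and makes the passage to the limit effective.
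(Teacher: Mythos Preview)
Your proof is correct and follows essentially the same Lieb--Oxford scheme as the paper: reduce to radially nonincreasing competitors via Riesz rearrangement, use the two-parameter scaling invariance to normalize $\|h_n\|_1=\|h_n\|_m=1$, extract a pointwise limit via the envelope $\Phi$ and Helly, pass to the limit in the numerator, and close with Fatou and the strict Riesz inequality. The only cosmetic differences are that the paper phrases the rearrangement step through a dual function $\varphi\in L^p$ rather than your direct decomposition $\int (K_{s/2}\ast h)^{p'-1}(K_{s/2}\ast h)$, and it passes to the limit in $\|K_{s/2}\ast h_n\|_{p'}$ by dominating the convolutions pointwise by $K_{s/2}\ast\Phi\in L^{p'}$ rather than going through $L^{(p_s^*)'}$ convergence plus \eqref{ineq:hls-particolare}; both variants are equivalent.
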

	\begin{proof}
		{\bf Step 1: reduction to normalized radially symmetric and nonincreasing functions.} Let $(h_j)_{j \in \mathbb{N}}$ be a maximizing sequence of feasible competitors for $H^*_{m,s}$, i.e.
		\[
		\lim_{j \rightarrow \infty} \frac{\|K_{s/2} \ast h_j\|_{p'}^{p'}}{\|h_j\|_1^{p'\,\vartheta_0} \|h_j\|^{p'\,(1-\vartheta_0)}_{m}} = H^*_{m,s}, \quad \mbox{ with }\,\, h_j \in  L^1(\mathbb{R}^N) \cap L^m(\mathbb{R}^N) \setminus \{0\},\quad \mbox{ for }\,\, j \in \mathbb{N}. 
		\]
		We can assume that $h_j \ge 0$, since $K_{s/2} \ast h_j \le K_{s/2} \ast |h_j|$ pointwisely. We can further assume that 
		\begin{equation} \label{norm-ass-ex}
			\|h_j\|_1 = \|h_j\|_{m} = 1, \qquad \mbox{ for } j \in \mathbb{N}.
		\end{equation}
		This is not restrictive, since we could replace each approximant $h_j$ with a rescaled version given by  
		\[
		\widetilde{h_j}(x) = \lambda_j\,h_j(\mu_j\,x), \quad \mbox{ with }\,\, \mu_j = \left(\frac{\|h_j\|_1}{\|h_j\|_m}\right)^{\frac{1}{N} \frac{m}{m-1}},\quad \lambda_j = \frac{\mu_j^N}{\|h_j\|_1},\quad \mbox{ for }\,\, j \in \mathbb{N}.
		\]
		Indeed, we have 
		\[
		\begin{split}
			\|K_{s/2} \ast \widetilde{h_j}\|_{p'}^{p'} = \int_{\mathbb{R}^N} \Bigg|\int_{\mathbb{R}^N} |x-y|^{s-N}\,\lambda_j\,h(\mu_j\,y) dy\Bigg|^{p'}dx = \frac{\lambda^{p'}}{\mu^{s\,p'+N}} \|K_{s/2} \ast h_j\|_{p'}^{p'}, 
		\end{split}
		\]
		and 
		\[
		\|\widetilde{h_j}\|_1 = \frac{\lambda}{\mu^N}\,\|h_j\|_1,  \qquad \|\widetilde{h_j}\|_m = \frac{\lambda}{\mu^{\frac{N}{m}}}\,\|h_j\|_m,
		\]
		for every $j \in \mathbb{N}$. This yields 
		\[
		\begin{split}
			\frac{\|K_{s/2} \ast \widetilde{h_j}\|_{p'}^{p'}}{\|\widetilde{h_j}\|_1^{p'\,\vartheta_0}\|\widetilde{h_j}\|_m^{p'\,(1-\vartheta_0)}} &= \frac{\lambda_j^{p'}}{\lambda_j^{p'\,\vartheta_0} \lambda_j^{p'\,(1-\vartheta_0)}}\frac{\mu_j^{N\,p'\,\vartheta_0 + \frac{N}{m}\,p'\,(1-\vartheta_0)}}{\mu_j^{s\,p' +N}} \frac{\|K_{s/2} \ast h_j\|_{p'}^{p'}}{\|h_j\|^{p'\,\vartheta_0} \|h_j\|_m^{p'\,(1-\vartheta_0)}} \\
			&=  \frac{\|K_{s/2} \ast h_j\|_{p'}^{p'}}{\|h_j\|^{p'\,\vartheta_0} \|h_j\|_m^{p'\,(1-\vartheta_0)}},
		\end{split}
		\]
		for every $j \in \mathbb{N}$.
		\par
		Eventually, we claim that it is not restrictive to assume that $h_j$ is  radially nonincreasing, for every $j \in \mathbb{N}$. Indeed, take $\varphi_j \in L^{p}(\mathbb{R}^N)$ with $\|\varphi_j\|_{p} = 1$ such that 
		\begin{equation*} \label{facile}
			\|K_{s/2} \ast h_j\|_{p'} = \int_{\mathbb{R}^N} \varphi_j\,\left(K_{s/2} \ast h_j\right)\,dx.
		\end{equation*}
		We denote with $\varphi^*_j$ and $h_j^*$ the radially symmetric nonincreasing rearrangement of $\varphi_j$ and $h_j$, respectively. By using the Riesz's rearrangement inequality \cite[Theorem 3.7]{LL}, we get
		\[
		\begin{split}
			\|K_{s/2} \ast h_j\|_{p'} = c_{N, s/2}\iint_{\mathbb{R}^N \times \mathbb{R}^N}\frac{\varphi_j(x)\,h_j(y)}{|x-y|^{N-s}}dxdy \le c_{N, s/2}\iint_{\mathbb{R}^N \times \mathbb{R}^N} \frac{\varphi^{*}_j(x)\,h_j^{*}(y)}{|x-y|^{N-s}}dxdy.
		\end{split}
		\]
		 Since $\|\varphi_j^*\|_p=1$, passing to the supremum on the right hand side we obtain 
		$$
		\|K_{s/2} \ast h_j\|_{p'} \le \|K_{s/2} \ast h^*_j\|_{p'},
		$$
		which proves our claim. 
		\vskip.2cm \noindent
		{\bf Step 2: the supremum is achieved.} Thanks to {\it Step 1} we can assume that $h_j$ is a nonnegative radially symmetric nonincreasing function with $\|h_j\|_1 = \|h_j\|_{m} = 1$, for every $j \in \mathbb{N}$. By using spherical coordinates, by the monotonicity of $h_j$, we can infer that for every $R > 0$ we have
		\[
		\begin{split}
			1 = \|h_j\|_1 \ge N\,\omega_N\, \int_0^{R} \xi_j(r)\, r^{N-1}\,dr \ge N\,\omega_N \,\xi_j(R)\, \int_{0}^{R} r^{N-1}\, dr  = \omega_N\, \xi_j(R)\, R^N,
		\end{split}
		\]
		where $\xi_j(|x|) := h_j(x)$ is the one-dimensional radial profile of $h_j$, for every $j \in \mathbb{N}$.
		Similarly, we have
		\[
		1 = \|h_j\|^m_{m} \ge \omega_N\,\xi_j(R)^{m}\,R^N,
		\]
		that is 	
		\begin{equation} \label{prop:xi}
			\sup_{(R, \infty)}|\xi_j| \le \frac{1}{\omega_N}\,\min\left\{\frac{1}{R^N}, \frac{1}{R^{N/m}}\right\} =: \omega(R), \qquad \mbox{ for every } R > 0,\,\, j \in \mathbb{N}.
		\end{equation}
		Lebesgue's differentiation theorem  for monotone functions (see  \cite[Corollary 3.29]{AFP} for instance) entails that
		\[
		\int_{R}^{\infty} |\xi'_j(r)|\,dr \le \omega(R), \qquad \mbox{ for every } R > 0,\,\, j \in \mathbb{N}.
		\]
		By means of a diagonal argument and {\it Helly's Selection Theorem} (see \cite[Proposition 19.1c]{DiB}), we can extract a subsequence (not relabeled) of nonincreasing functions $(\xi_j)_j$ converging everywhere to a nonincreasing function $\xi$ in  $(R, \infty)$, for every rational number $R > 0$. This implies that  
		\begin{equation} \label{point-convergence-helly}
			\lim_{j \to \infty} h_j(x) = \lim_{j \rightarrow \infty} \xi_j(|x|) = \xi(|x|) =: h_{s}(x), \qquad \mbox{ for every } x \in \mathbb{R}^N \setminus \{0\}.
		\end{equation}
		By collecting the previous information, we infer that $h_{s}$ is a  radially nonincreasing  function satisfying 
		\begin{equation} \label{domina-il-minimo}
			0 \le h_{s}(x) \le \omega(|x|) =: v(x), \qquad \mbox{ for every } x \in \mathbb{R}^N \setminus \{0\}.
		\end{equation}
		Observe that
		\begin{equation} \label{funzione-dominante}
			v \in L^{q}(\mathbb{R}^N), \qquad \mbox{ for every } 1 < q < m.
		\end{equation}
		By using \eqref{norm-ass-ex} and \eqref{point-convergence-helly},  Fatou's Lemma entails that 
		\begin{equation} \label{fatou}
			\|h_s\|_1 \le 1 \qquad \mbox{ and } \qquad \|h_s\|_m \le 1,
		\end{equation}
		thus in particular $h_{s} \in L^1(\mathbb{R}^N) \cap L^m(\mathbb{R}^N).$
		Moreover, we have 
		\begin{equation} \label{pointwise-convergence}
			\left( K_{s/2} \ast h_{s} \right)(x) = \lim_{j \to \infty} \left(K_{s/2} \ast h_j \right) (x)  \le \left(K_{s/2} \ast v\right)(x), \qquad \mbox{ a.e. } x \in \mathbb{R}^N. 
		\end{equation} 
		Indeed, \MMM from \eqref{prop:xi} we get the inequality in \eqref{pointwise-convergence} and \KKK 
		\[
		\left(K_{s/2} \ast h_j\right)(x) = c_{N, s}\int_{\mathbb{R}^N}\frac{h_j(y)}{|x-y|^{N-s}}dy \le c_{N, s}\int_{\mathbb{R}^N}\frac{v(y)}{|x-y|^{N-s}}dy = \left(K_{s/2} \ast v\right)(x).
		\]
		From \eqref{funzione-dominante} and the properties of the Riesz potential (see \cite[Theorem 1, Chapter V]{Steinbook}) 
		\[
		\left(K_{s/2} \ast v\right)(x) < \infty, \qquad \mbox{ a.e. } x \in \mathbb{R}^N. 
		\]
		Then, by using  \eqref{point-convergence-helly} and Lebesgue's Dominated Convergence Theorem, we obtain \MMM the equality in \KKK \eqref{pointwise-convergence}. Observe that, by using \eqref{funzione-dominante} and Corollary \ref{cor:hls}, we have
		$	K_{s/2} \ast v \in L^{p'}(\mathbb{R}^N)$.
		By using Lebegue's Dominated Convergence Theorem, by \eqref{pointwise-convergence} and by recalling \eqref{norm-ass-ex}, we infer that
		\begin{equation*}
			H^*_{m,s} = \lim_{j \rightarrow \infty} \|K_{s/2} \ast h_j\|_{p'}^{p'} = \|K_{s/2} \ast h_s\|^{p'}_{p'} \le  \frac{\|K_{s/2} \ast h_s\|^{p'}_{p'}}{\|h_{s}\|_1^{p'\,\vartheta_0} \|h_s\|^{p'\,(1-\vartheta_0)}_m},
		\end{equation*} 
		where in the last inequality we also used  \eqref{fatou}.
		Then, the maximality of $H^*_{m,s}$ among functions in $L^1(\mathbb{R}^N) \cap L^m(\mathbb{R}^N)$ entails that 
		\[
		\|h_{s}\|_1^{p'\vartheta_0} \|h_s\|^{p'(1-\vartheta_0)}_m = 1.
		\]
		This combined with \eqref{fatou} gives that $\|h_{s}\|_1 = \|h_{s}\|_m = 1$. 
		\par 
		To complete the proof, we are only left out to prove that every other nonnegative extremal of \eqref{extremal-functional-hls} must be radially nonincreasing up to translations. Assume that $h_s \in L^1_{+}(\mathbb{R}^N) \cap L^m(\mathbb{R}^N) \setminus \{0\}$ satisfies equality in  \eqref{extremal-functional-hls}. In light of Corollary \ref{cor:hls} we know that $K_{s/2} \ast h_s \in L^{p'}(\mathbb{R}^N)$, thus we take $\varphi \in L^p(\mathbb{R}^N)$ such that $$\|K_{s/2} \ast h_s\|_{p'} = \int_{\mathbb{R}^N} \varphi\,(K_{s/2} \ast h_s)\,dx.$$  By using the Riesz's rearrangement inequality in strict form \cite[Theorem 3.9]{LL}, we get 
		\[
		\begin{split}
			\|K_{s/2} \ast h_s\|_{p'} = c_{N, s/2}\,\iint_{\mathbb{R}^N \times \mathbb{R}^N} \frac{\varphi(x)\,h_s(y)}{|x-y|^{N-s}}\,dxdy \le \iint_{\mathbb{R}^N \times \mathbb{R}^N} \frac{\varphi^{*}(x)\,h_s^{*}(y)}{|x-y|^{N-s}}dxdy
		\end{split}
		\]
		with equality holding only if $\varphi = \varphi^*(\cdot - y)$ and $h_s = h^*_s(\cdot - y)$, for some $y \in \mathbb{R}^N$. This proves our claim and ends the proof.
	\end{proof}
	\begin{remark}[Extremals in $\mathcal{Y}_M$]\label{extremalym}
		Under the assumptions of Lemma \ref{lm:extramls-hls}, we can infer that for every {\it prescribed mass} $M > 0$ there exists a nonnegative radially symmetric nonincreasing function $h_{s, M} \in \mathcal{Y}_M$ realizing the supremum in \eqref{extremal-functional-hls}. More precisely, it is obtained as
		\[
		h_{s, M} := M\,h_s,
		\] 
	where $h_s$ is an extremal of \eqref{ineq-hls-2} provided by Lemma \ref{lm:extramls-hls} (thus satisfying $\|h_s\|_1=1$) with barycenter at the origin.
	\end{remark}
	\subsection{Euler-Lagrange equation}
	The Euler-Lagrange equation satisfied by nonnegative extremals of \eqref{ineq-hls-2} is derived below. We share arguments from \cite[Theorem 3.1]{CCV}. 
	\begin{lemma} \label{lm:el-estremali-hls}
		Let $1 < p < \infty$ and   $0<s\,p < N$. Let $m > (p^*_s)'$. For every extremal  $h_0 \in L_{+}^1(\mathbb{R}^N) \cap L^m(\mathbb{R}^N)$ of the HLS \eqref{ineq-hls-2}, we have 
			\begin{equation} \label{eqn_el}
			\mathcal{A}_s\, h_0^{m-1} = \Big(\mathcal{K}_{s,p}(h_0) - \mathcal{C}_s \Big)_{+}\qquad \mbox{ in $\mathbb{R}^N$},
		\end{equation}
		where 
		\begin{equation} \label{costanti-eulero-lagrange}
			\mathcal{A}_s= \frac{m'}{p^*_s}\frac{\|K_{s/2} \ast h_0\|_{p'}^{p'}}{\|h_0\|_m^m}, \qquad \mathcal{C}_s = \left(1-\frac{m'}{p^*_s}\right)\,\frac{\|K_{s/2} \ast h_0\|_{p'}^{p'}}{\|h_0\|_1} > 0.  
		\end{equation}
		
	\end{lemma}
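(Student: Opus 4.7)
The plan is to derive \eqref{eqn_el} by computing the first variation of the HLS quotient at the maximizer $h_0$. Set
$$J(h) := \frac{\|K_{s/2}\ast h\|_{p'}^{p'}}{\|h\|_1^{p'\vartheta_0}\,\|h\|_m^{p'(1-\vartheta_0)}},$$
so that $J(h_0)=H^*_{m,s}$ is the supremum. The expansion already displayed in the introduction for $\mathcal I_{s,p}$ gives $\frac{d}{d\varepsilon}\big|_{\varepsilon=0}\|K_{s/2}\ast(h_0+\varepsilon\varphi)\|_{p'}^{p'} = p'\int \mathcal K_{s,p}(h_0)\,\varphi\,dx$, and combining this with the routine derivatives of $\|h\|_1$ and $\|h\|_m^m$ via the quotient rule, after collecting terms one recognizes the constants in \eqref{costanti-eulero-lagrange} and obtains
$$\frac{J'(h_0)[\varphi]}{J(h_0)} \;=\; \frac{p'}{\|K_{s/2}\ast h_0\|_{p'}^{p'}}\int_{\mathbb{R}^N}\bigl[\mathcal K_{s,p}(h_0) - \mathcal C_s - \mathcal A_s\,h_0^{m-1}\bigr]\,\varphi\,dx.$$

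I would then test against two classes of admissible perturbations, both taken bounded with compact support so that $h_0+\varepsilon\varphi \ge 0$ for small $\varepsilon \ge 0$. First, for $\varphi$ of arbitrary sign supported in $\{h_0>0\}$, two-sided variations are allowed and maximality forces $J'(h_0)[\varphi]=0$, hence $\mathcal K_{s,p}(h_0) - \mathcal C_s - \mathcal A_s h_0^{m-1}=0$ a.e.\ on $\{h_0>0\}$. Second, for $\varphi\ge 0$ supported in $\{h_0=0\}$ only one-sided variations $\varepsilon\ge 0$ are admissible; since $h_0^{m-1}=0$ on the support of $\varphi$, maximality gives $\int(\mathcal K_{s,p}(h_0)-\mathcal C_s)\,\varphi\le 0$ for every such $\varphi$, whence $\mathcal K_{s,p}(h_0) \le \mathcal C_s$ a.e.\ on $\{h_0=0\}$. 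Merging the two conclusions yields $\mathcal A_s\,h_0^{m-1} = (\mathcal K_{s,p}(h_0)-\mathcal C_s)_+$ a.e., which is \eqref{eqn_el}. Positivity of $\mathcal A_s$ and $\mathcal C_s$ is immediate from $0<\vartheta_0<1$ together with $K_{s/2}\ast h_0\not\equiv 0$.

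The only delicate point is to justify the quadratic error estimate for the potential term, namely $\|K_{s/2}\ast(h_0+\varepsilon\varphi)\|_{p'}^{p'} - \|K_{s/2}\ast h_0\|_{p'}^{p'} - \varepsilon p'\int \mathcal K_{s,p}(h_0)\,\varphi = o(\varepsilon)$. Here I would invoke the pointwise inequality $\bigl|(a+\varepsilon b)^{p'} - a^{p'} - \varepsilon p' a^{p'-1}b\bigr| \le C_{p'}\,\varepsilon^2\,(|a|+|b|)^{p'-2}\,b^2$ for $p'\ge 2$ (with the appropriate modification when $1<p'<2$), together with the facts that $K_{s/2}\ast h_0\in L^{p'}$ by Corollary \ref{cor:hls} and $K_{s/2}\ast\varphi\in L^{p'}\cap L^\infty$ thanks to Lemma \ref{lm:bound-l-infinito-riesz} applied to the bounded compactly supported $\varphi$; dominated convergence then yields the expansion. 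The corresponding expansion for $\|h_0+\varepsilon\varphi\|_m^m$ is standard since $h_0\in L^m$ and $\varphi\in L^\infty$ is compactly supported; everything else is the Lagrange-multiplier bookkeeping outlined above.
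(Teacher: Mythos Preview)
Your argument is correct and follows essentially the same first-variation strategy as the paper. The only notable difference is in how positivity of the perturbed competitor is enforced: the paper takes multiplicative perturbations $h_\varepsilon = h_0(1+\varepsilon\varphi)$ with $\varphi\in C_0^\infty(\mathbb{R}^N)$, which automatically stays nonnegative for $|\varepsilon|<1/(2\|\varphi\|_\infty)$ without any a~priori knowledge of the structure of $\{h_0>0\}$, whereas your additive perturbations supported in $\{h_0>0\}$ implicitly rely on the radial monotonicity from Lemma~\ref{lm:extramls-hls} to guarantee that $h_0$ is bounded away from zero on compact subsets of its positivity set. Both routes are valid here; the paper's device is slightly more self-contained, while yours separates the two regions more explicitly. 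The paper also handles the Taylor remainders via the integral mean-value form plus dominated convergence rather than your pointwise second-order inequality, but this is a purely cosmetic distinction.
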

	\begin{proof}
		We set 
		\[
		\mathcal{G}_s(h):= H^*_{m,s}\,\|h\|_1^{p'\,\vartheta_0}\,\|h\|_{m}^{p'\,(1-\vartheta_0)} - \|K_{s/2} \ast h\|_{p'}^{p'},  \qquad \mbox{ for } h \in L_{+}^1(\mathbb{R}^N) \cap L^m(\mathbb{R}^N)\setminus\{0\},
		\]
		where $H^*_{m,s}$ and $\vartheta_0$ are respectively given by \eqref{extremal-functional-hls} and \eqref{esponente-interpol}. Let $h_0$ be as in the statement. \MMM We shall make perturbations of $h_0$ that preserve positivity. \KKK  We take $\varphi \in C^\infty_0(\mathbb{R}^N)$ and set 
		\begin{equation} \label{variazioni}
			\psi := \varphi\,h_0, \qquad \varepsilon_0:= \frac{1}{2 \|\varphi\|_\infty}, \qquad h_\varepsilon := h_0 + \varepsilon\,\psi \ge 0, \qquad \mbox{ for } 0 \le |\varepsilon| < \varepsilon_0. 
		\end{equation}
		By the  optimality \KKK  of $h_0$, we have 
		\[
		\mathcal{G}_s(h_\varepsilon) \ge \mathcal{G}(h_0) =0, \qquad \mbox{ for every } 0 \le |\varepsilon| < \varepsilon_0,
		\]
		this entails that 
		\begin{equation} \label{variazione-prima}
			\frac{\rm d}{\rm d\varepsilon} \biggr\rvert_{\varepsilon = 0} \mathcal{G}_s(h_\varepsilon) = 0.
		\end{equation}
		We have 
		\begin{equation} \label{var-prima-conto}
			\mathcal{G}_s(h_\varepsilon) = H^*_{m,s}\,\|h_0 + \varepsilon\,\psi\|_1^{p'\,\vartheta_0}\,\|h_0 + \varepsilon\,\psi\|_{m}^{p'\,(1-\vartheta_0)} - \|K_{s/2} \ast (h_0 + \varepsilon\,\psi)\|_{p'}^{p'}. 
		\end{equation}
		We expand to the first order, with respect to the variable $\varepsilon$, the three integral terms appearing in the rightmost term. For the first one, it is clear that 
		\[
		\int_{\mathbb{R}^N} (h_0+\varepsilon \psi)\,dx = \int_{\mathbb{R}^N} h_0\,dx + \varepsilon \int_{\mathbb{R}^N}\psi\,dx, \quad \mbox{ for } 0 < |\varepsilon| < \varepsilon_0.  
		\]
		For the second integral term, we have 
		\[
		(h_0 + \varepsilon\,\psi)^m - h_0^m = \varepsilon\int_{0}^1 m\,(h_0 + \varepsilon\,t\, \psi)^{m-1}\, \psi\,dt, \qquad \mbox{ a.e. in } \mathbb{R}^N,\,\, \mbox{ for }  0 < |\varepsilon| < \varepsilon_0.
		\]
		By integrating over $\mathbb{R}^N$, dividing by $\varepsilon$ and using Fubini theorem we obtain 
		\begin{equation} \label{integrale1}
			\int_{\mathbb{R}^N} \dfrac{\left(h_0 + \varepsilon\,\psi\right)^m - h_0^m}{\varepsilon}\,dx = \int_{0}^1 \mathscr{H}_\varepsilon(t)\,dt,
		\end{equation}
		where we set 
		\[
		\mathscr{H}_{\varepsilon}(t) = m\int_{\mathbb{R}^N} \left(h_0 + \varepsilon\,t\,\psi\right)^{m-1} \psi\,dx, \qquad \mbox{ for } t \in [0, 1].
		\]
		By H\"older's inequality and \eqref{variazioni}, we infer that 
		\begin{equation*}
			|\mathscr{H}_\varepsilon(t)| \le m\,\|h_0 + \varepsilon\,t\,\psi\|_m^{\frac{m}{m'}}\,\|\psi\|_m \le m\,\left(\|\psi\|_m + \varepsilon_0\,\|\psi\|_m\right)^{\frac{m}{m'}}\,\|\psi\|_m, \qquad \mbox{ for } t \in [0,1],
		\end{equation*}
		for every $0 <|\varepsilon| < \varepsilon_0$. By using Lebesgue's Dominated Convergence Theorem in \eqref{integrale1}
		\begin{equation*} \label{integrale1-final}
			\int_{\mathbb{R}^N} \left(h_0 + \varepsilon\,\psi\right)^m\,dx  = \int_{\mathbb{R}^N} h_0^m\,dx + \varepsilon \int_{\mathbb{R}^N} m\,h_0^{m-1}\psi\,dx + o(\varepsilon), \qquad \mbox{ as } \varepsilon \to 0.
		\end{equation*}
		For the third integral term, we have a.e. in $\mathbb{R}^N$ the following identity
		\[
		\left(K_{s/2} \ast \left(h_0 + \varepsilon\,\psi\right)\right)^{p'} - \left(K_{s/2} \ast h_0\right)^{p'} = \varepsilon\,p' \int_0^1 \left(K_{s/2} \ast \left(h_0 + \varepsilon\,t \psi\right)\right)^{p'-1} \left(K_{s/2} \ast \psi\right)\,dt,
		\]
		for $0 < |\varepsilon| < \varepsilon_0$.
		By integrating over $\mathbb{R}^N$, dividing by $\varepsilon$ and by Fubini's theorem 
		\begin{equation}\label{integrale2}
			\begin{split}
				\int_{\mathbb{R}^N}\frac{1}{\varepsilon}\left[\left(K_{s/2} \ast \left(h_0 + \varepsilon\,\psi\right)\right)^{p'} - \left(K_{s/2} \ast h_0\right)^{p'}\right]dx = \int_{0}^1 \mathscr{K}_\varepsilon(t) dt,
			\end{split}
		\end{equation}
		where we set 
		\[
		\mathscr{K}_\varepsilon(t) = p'\int_{\mathbb{R}^N} \left(K_{s/2} \ast \left(h_0 + \varepsilon\,t\,\psi\right)\right)^{p'-1} \left(K_{s/2} \ast \psi\right)\,dx, \qquad \mbox{ for } t \in [0, 1].
		\]
		By H\"older's inequality and the HLS-type inequality \eqref{ineq-hls-2}, we get 
		\[
		\begin{split}
			|\mathscr{K}_\varepsilon(t)| &\le p'\,\|K_{s/2} \ast (h_0+ \varepsilon\,t\,\psi)\|_{p'}^{\frac{p'}{p}}\,\|K_{s/2} \ast \psi\|_{p'} \\
			&\le p'\,H^*_{m,s}\,\|h_0 + \varepsilon\,t\,\psi\|_1^{\frac{p'}{p}\,\vartheta_0} \|h_0 + \varepsilon\,t\,\psi\|_m^{\frac{p}{p'}\,(1-\vartheta_0) } \|\psi\|_1^{\vartheta_0} \|\psi\|^{1-\vartheta_0}_m,
		\end{split}
		\] 
		where $H^*_{m,s}$ and $\vartheta_0$ are respectively given by \eqref{extremal-functional-hls} and \eqref{esponente-interpol}. By Minkowski's inequality, we further have 
		\begin{equation*}
			|\mathscr{K}_\varepsilon(t)| \le p'\,H^*_{m,s}\,\left(\|h_0\|_1 + \varepsilon_0\,\|\psi\|_1\right)^{\frac{p'}{p}\,\vartheta_0}\,\left(\|h_0\|_m + \varepsilon_0\,\|\psi\|_m\right)^{\frac{p'}{p}\,(1-\vartheta_0)}\,\|\psi\|_1^{\vartheta_0}\,\|\psi \|^{1-\vartheta_0}_m,
		\end{equation*}
		for $t \in [0,1]$, for $0 < |\varepsilon| < \varepsilon_0$. Thus we can use Lebesgue's Dominated Convergence Theorem in \eqref{integrale2}, obtaining
		\begin{equation*} \label{integrale2-final}
			\begin{split}
				\int_{\mathbb{R}^N} \left(K_{s/2} \ast \left(h_0 + \varepsilon\,\psi\right)\right)^{p'}dx 
				&= \int_{\mathbb{R}^N} \left(K_{s/2} \ast h_0\right)^{p'}dx + \varepsilon\,p'\int_{\mathbb{R}^N} \left(K_{s/2} \ast h_0\right)^{p'-1}\left(K_{s/2} \ast \psi\right)dx + o(\varepsilon) \\
				&= \int_{\mathbb{R}^N} \left(K_{s/2} \ast h_0\right)^{p'}dx + \varepsilon\,p'\,\int_{\mathbb{R}^N} K_{s/2} \ast \left(K_{s/2} \ast h_0\right)^{p'-1}\,\psi\,dx + o(\varepsilon),
			\end{split}
		\end{equation*} 
		as $\varepsilon \to 0$, where the last identity follows from Plancherel's theorem.
		By collecting the previous asymptotic expansions and by using that 
		\[
		H^*_{m,s} = \frac{\|K_{s/2} \ast h_0\|_{p'}^{p'}}{\|h_0\|_1^{p'\,\vartheta_0}\, \|h_0\|_m^{p'\,(1-\vartheta_0)}},
		\] from \eqref{var-prima-conto} we get 
		\begin{equation*} \label{var-prima-nulla}
			\begin{split}
				\frac{\rm d}{\rm d\varepsilon} \biggr\rvert_{\varepsilon = 0} \mathcal{G}_s(h_\varepsilon) =& p'\,\vartheta_0\,\frac{\|K_{s/2} \ast h_0\|_{p'}^{p'}}{\|h_0\|_1}\,\int_{\mathbb{R}^N} \psi\,dx + p'\,(1-\vartheta_0)\,\frac{\|K_{s/2} \ast h_0\|_{p'}^{p'}}{\|h_0\|_m^m}\,\int_{\mathbb{R}^N} h^{m-1}_0\,\psi\,dx \\
				&- p'\,\int_{\mathbb{R}^N} K_{s/2} \ast \left(K_{s/2} \ast h_0\right)^{p'-1}\,\psi\,dx.
			\end{split}
		\end{equation*}
		By recalling \eqref{variazioni} and \eqref{variazione-prima}, this entails  that 
		\[
		\int_{\mathbb{R}^N} \left(p'\vartheta_0\frac{\|K_{s/2} \ast h_0\|_{p'}^{p'}}{\|h_0\|_1} + p'(1-\vartheta_0)\frac{\|K_{s/2} \ast h_0\|_{p'}^{p'}}{\|h_0\|_m^m}h_0^{m-1} - p'K_{s/2} \ast (K_{s/2} \ast h_0)^{p'-1}\right)\varphi\,h_0\,dx = 0,
		\]
		for every $\varphi \in C^\infty_0(\mathbb{R}^N)$. By the positivity of $h_0$ on its support (\MMM which is either a ball or $\mathbb R^N$, as a consequence of Lemma \ref{lm:extramls-hls}\KKK) and by recalling the expression of $\vartheta_0$ given by \eqref{esponente-interpol}, we obtain 
		\begin{equation} \label{el-dentro-supp-hls}
			\frac{m'}{p^*_s}\frac{\|K_{s/2} \ast h_0\|_{p'}^{p'}}{\|h_0\|_m^m}\,h_0^{m-1} = \mathcal{K}_{s,p}(h_0) - \left(1-\frac{m'}{p^*_s}\right)\,\frac{\|K_{s/2} \ast h_0\|_{p'}^{p'}}{\|h_0\|_1}\qquad \mbox{ in ${\rm supp}(h_0)$}.  
		\end{equation}    
		\vskip.2cm \noindent
		In order to deduce a condition outside the support of $h_0$\KKK,  we take any nonnegative function $\varphi \in C^\infty_0(\mathbb{R}^N) \setminus \left\{0\right\}$ and set 
		\[
		 h_\varepsilon := h_0 + \varepsilon\,\varphi, \quad \mbox{ for } \varepsilon \ge 0.
		\]
		Since $\varphi \ge 0$ and $\varepsilon \ge 0$, we have $h_\varepsilon \ge 0$ thus by minimality of $h_0$ for $\mathcal{G}$ we infer  
		\[
		\lim_{\varepsilon \to 0^+} \frac{\mathcal{G}(h_\varepsilon) - \mathcal{G}(h_0)}{\varepsilon} \ge 0. 
		\]
		By arguing as before, we get 
		\[
		\int_{\mathbb{R}^N} \left(p'\,\vartheta_0\,\frac{\|K_{s/2} \ast h_0\|_{p'}^{p'}}{\|h_0\|_1} + p'\,(1-\vartheta_0)\,\frac{\|K_{s/2} \ast h_0\|_{p'}^{p'}}{\|h_0\|_m^m}\,h_0^{m-1} - p'\,K_{s/2} \ast (K_{s/2} \ast h_0)^{p'-1}\right)\,\varphi\,dx \ge 0,
		\] 
		for every nonnegative function $\varphi \in C^\infty_0(\mathbb{R}^N)$. This entails that 
		\begin{equation}\label{el-fuori}
			p'\,\vartheta_0\,\frac{\|K_{s/2} \ast h_0\|_{p'}^{p'}}{\|h_0\|_1} - p' K_{s/2} \ast \left(K_{s/2} \ast h_0\right)^{p'-1}(x) \ge 0\qquad \mbox{ in ${\rm supp}(h_0)^c$}.
		\end{equation}
		The proof is thereby complete, in light of \eqref{el-dentro-supp-hls} and \eqref{el-fuori}\KKK.  
	\end{proof}
	\begin{remark}
		We can express the constants $\mathcal{A}_s$ and $\mathcal{C}_s$ in \eqref{costanti-eulero-lagrange} in terms of $H^*_{m, s}$: we have 
		\[
		\mathcal{A}_s = \frac{m'}{p^*_s}\,H^*_{m, s}\,\|h\|_{m}^{p'-m}\,\left(\frac{\|h\|_m}{\|h\|_1}\right)^{p'\,\left(\frac{m'}{p^*_s} - 1\right)}, \qquad \mathcal{C}_s = \left(1-\frac{m'}{p^*_s}\right)\,H^*_{m, s}\,\|h\|_1^{p'-1}\,\left(\frac{\|h\|_m}{\|h\|_1}\right)^{\frac{p'\,m'}{p^*_s}}.
		\] 
	\end{remark}
	\subsection{Regularity properties} Next we show that any \MMM extremal of the HLS inequality \eqref{ineq-hls-2} \KKK has compact support and it is  bounded. We rely on a {\it bootstrap argument} based on the combination of the HLS inequality \eqref{hls-inequality-con-asym} and Lemma \ref{lm:bound-l-infinito-riesz}. 
	\begin{lemma}[$L^\infty-$bound and compactness of the support] \label{lm:bootstrap-hls}
		Let  $1 < p < \infty$  and  $0<s\,p < N$. Let $m > (p^*_s)'$.  \MMM For every extremal $h_s \in L_{+}^1(\mathbb{R}^N) \cap L^m(\mathbb{R}^N)$  of the HLS inequality \eqref{ineq-hls-2}\KKK, we have that $h_s \in L^\infty(\mathbb{R}^N)$. Moreover, the  support of $h_s$ is compact.
	\end{lemma}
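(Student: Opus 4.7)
The starting point is the Euler--Lagrange equation \eqref{eqn_el}, which gives the pointwise control
\[
h_s^{m-1}\,\le\,\frac{1}{\mathcal A_s}\,\mathcal K_{s,p}(h_s)\,=\,\frac{1}{\mathcal A_s}\,K_{s/2}\ast\bigl(K_{s/2}\ast h_s\bigr)^{p'-1}\qquad\mbox{on }\mathbb R^N.
\]
Hence, to obtain $h_s\in L^\infty(\mathbb R^N)$ it suffices to prove $\mathcal K_{s,p}(h_s)\in L^\infty(\mathbb R^N)$; to obtain compactness of the support it suffices to show that $\mathcal K_{s,p}(h_s)(x)\to 0$ as $|x|\to\infty$, since then $\{\mathcal K_{s,p}(h_s)>\mathcal C_s\}$ is bounded and the equation forces $h_s\equiv 0$ on the complement. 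Both reductions rely crucially on $\mathcal C_s>0$.

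The $L^\infty$ bound will be obtained by a bootstrap driven by two alternating applications of HLS inequalities (Lemma \ref{lm:hls} and Corollary \ref{cor:hls}). Starting from $h_s\in L^1(\mathbb R^N)\cap L^m(\mathbb R^N)$, suppose inductively that $h_s\in L^q(\mathbb R^N)$ with $sq<N$. Then Lemma \ref{lm:hls} yields $K_{s/2}\ast h_s\in L^{q^*_s}(\mathbb R^N)$, hence $(K_{s/2}\ast h_s)^{p'-1}\in L^{q^*_s/(p'-1)}(\mathbb R^N)$; if $s\,q^*_s/(p'-1)<N$, a second application of Lemma \ref{lm:hls} upgrades $\mathcal K_{s,p}(h_s)$, and by the inequality above one gets $h_s\in L^{(m-1)b(q)}(\mathbb R^N)$ with
\[
b(q)\,=\,\frac{Nq}{(p'-1)N-p'sq}.
\]
A direct computation shows that $(m-1)b(q)>q$ iff $m>Np'/(N+p's)=(p_s^*)'$, so the iteration is strictly expansive, and moreover the gain is quantitative enough to escape any bounded interval. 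After finitely many iterations the range exponent either exceeds $N/s$, in which case Lemma \ref{lm:bound-l-infinito-riesz} (with $q=1$ and $r$ large) directly produces $K_{s/2}\ast h_s\in L^\infty$, or the intermediate exponent $q_s^*/(p'-1)$ crosses $N/s$, in which case Lemma \ref{lm:bound-l-infinito-riesz} applied in the second step gives $\mathcal K_{s,p}(h_s)\in L^\infty$; combining with Corollary \ref{cor:hls} to control the remaining $L^p$-tail of $(K_{s/2}\ast h_s)^{p'-1}$ closes the bootstrap, yielding $\mathcal K_{s,p}(h_s)\in L^\infty(\mathbb R^N)$ and thus $h_s\in L^\infty(\mathbb R^N)$.

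For the compactness of the support I exploit the full integrability just obtained: $h_s\in L^1(\mathbb R^N)\cap L^\infty(\mathbb R^N)$, hence $K_{s/2}\ast h_s\in L^\infty(\mathbb R^N)$ by Lemma \ref{lm:bound-l-infinito-riesz}. A standard splitting of the convolution into $B_R(x)$ and its complement, together with $h_s\in L^1$ and the $L^{r'}_{\mathrm{loc}}$-integrability of the kernel $K_{s/2}$ for any $r$ with $sr>N$, shows that $K_{s/2}\ast h_s(x)\to 0$ as $|x|\to\infty$. Since $K_{s/2}\ast h_s\in L^{p'}(\mathbb R^N)$ by Corollary \ref{cor:hls}, the function $v:=(K_{s/2}\ast h_s)^{p'-1}$ lies in $L^p(\mathbb R^N)\cap L^\infty(\mathbb R^N)$ and vanishes at infinity. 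Repeating the same near-field/far-field splitting for $K_{s/2}\ast v$ gives $\mathcal K_{s,p}(h_s)(x)\to 0$ as $|x|\to\infty$, which combined with the EL equation forces $\operatorname{supp}(h_s)\subset\overline{\{\mathcal K_{s,p}(h_s)>\mathcal C_s\}}$, a compact set.

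The main obstacle I anticipate is the bookkeeping in the bootstrap: one must check at every step both that the exponent $q$ remains below the relevant HLS threshold (so that Lemma \ref{lm:hls} is applicable) and that the gain factor $(m-1)b(q)/q$ stays bounded away from $1$, which is precisely what the subcritical assumption $m>(p_s^*)'$ quantifies via the computation above. Once $h_s\in L^\infty$ is secured, the vanishing-at-infinity step is routine and the conclusion on compact support follows immediately from the positivity of $\mathcal C_s$.
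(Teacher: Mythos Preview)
Your bootstrap for the $L^\infty$ bound is essentially identical to the paper's: the recursion $q\mapsto (m-1)b(q)$ with $b(q)=\tfrac{Nq}{(p'-1)N-p'sq}$ is precisely the paper's $m_{k+1}=\tfrac{N(m-1)(p-1)m_k}{N-sp\,m_k}$ after simplifying $p'-1=1/(p-1)$. One small imprecision: the equivalence ``$(m-1)b(q)>q$ iff $m>(p_s^*)'$'' is literally true only at $q=m$; for general $q$ the threshold is $p'-p'sq/N$. What you actually need (and what the paper checks) is that the ratio $(m-1)b(q)/q=\tfrac{N(m-1)(p-1)}{N-spq}$ is bounded below by its value at $q=m$, which exceeds $1$ under the hypothesis $m>(p_s^*)'$, so the iterates diverge geometrically.

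Where you differ from the paper is in the order of the two conclusions and the compactness argument. The paper proves compactness \emph{first}, with a one-line argument: since $h_s\in L^{(p_s^*)'}$, two applications of HLS give $\mathcal K_{s,p}(h_s)\in L^{p_s^*}$; combined with the fact that $\mathcal K_{s,p}(h_s)$ is radially nonincreasing (inherited from $h_s$ via Lemma~\ref{lm:extramls-hls}), this forces it to vanish at infinity, contradicting $\mathcal C_s>0$ if the support were all of $\mathbb R^N$. Your route---first securing $L^\infty$, then using a near-field/far-field splitting to show decay---is correct and has the virtue of not invoking radial monotonicity, but it is considerably longer. The paper's shortcut is worth noting: once you remember that extremals are radially nonincreasing, compactness of the support drops out before any bootstrap.
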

	\begin{proof}
		We start by proving that ${\rm supp}(h_s)$ is compact. \MMM Recall that by Lemma \ref{lm:extramls-hls}, the support of  $h_s$ is either a ball or $\mathbb R^N$. \KKK  By contradiction, assume that ${\rm supp}(h_s) = \mathbb{R}^N$. Our assumptions entail that $h_s \in L^{(p^*_s)'}(\mathbb{R}^N)$. By using twice the HLS inequality \eqref{hls-inequality-con-asym}, we infer that \MMM $\mathcal{K}_{s,p}(h_s) \in L^{p^*_s}(\mathbb{R}^N)$\KKK, so in particular it vanishes at infinity. \MMM By using \eqref{eqn_el} and by \KKK recalling that  $\mathcal{C}_s > 0$ from \eqref{costanti-eulero-lagrange}, we get a contradiction. 
		\vskip.2cm \noindent
		We now prove that $h_s \in L^\infty(\mathbb{R}^N)$.
		We set $m_1 := m$ and distinguish two cases according to whether $s\,m_1 \ge N$ or $s\,m_1 < N.$
		\vskip.2cm \noindent
		{\it Case 1: $s\,m_1 \ge N$.} Since $h_s \in L^1(\mathbb{R}^N) \cap L^{m_1}(\mathbb{R}^N)$, in particular $h_s \in L^r(\mathbb{R}^N)$ for every $1 < r < N/s$. By Corollary \ref{cor:hls}, we infer that 
		\[
		K_{s/2} \ast h_s \in L^{t}(\mathbb{R}^N), \qquad \mbox{ for every } N/(N-s) < t < \infty,
		\]
		and so 
		\[
		(K_{s/2} \ast h_s)^{p'-1} \in L^{t\,(p-1)}(\mathbb{R}^N), \qquad \mbox{ for every } N/(N-s) < t < \infty.
		\] 
		Since $s\,p < N$, we have 	
		\[
		\lim_{t \to \frac{N}{N-s}} t\,(p-1) < N/s.
		\]
		These considerations entail that 
		\[
		(K_{s/2} \ast h_s)^{p'-1} \in L^{t_1}(\mathbb{R}^N) \cap L^{t_2}(\mathbb{R}^N), \qquad \mbox{ for some } t_1 < N/s \mbox{ and } t_2 > N/s.
		\] 
		By Lemma \ref{lm:bound-l-infinito-riesz}, we then obtain $K_{s/2} \ast \left(K_{s/2} \ast h_s\right)^{p'-1} \in L^\infty(\mathbb{R}^N)$. In turn, from the Euler-Lagrange equation Lemma \ref{lm:el-estremali-hls}, we conclude that $h_s \in L^\infty(\mathbb{R}^N)$. 
		\vskip.2cm \noindent
		{\it Case 2: $s\,m_1 < N$.} Since $h_s \in L^1(\mathbb{R}^N) \cap L^{m_1}(\mathbb{R}^N)$, from the HLS inequality \eqref{hls-inequality-con-asym}, we infer that $K_{s/2} \ast h_s \in L^r(\mathbb{R}^N) \cap L^{(m_1)^*_s}(\mathbb{R}^N)$, for every $N/(N-s) < r < (m_1)^*_s$. This entails that
		\begin{equation*} \label{caso-2-bootstrap}
			(K_{s/2} \ast h_s)^{p'-1} \in L^{r(p-1)}(\mathbb{R}^N) \cap L^{(m_1)^*_s(p-1)}(\mathbb{R}^N), \qquad \mbox{ for every } N/(N-s) < r < (m_1)^*_s. 
		\end{equation*}
		If $(m_1)^*_s\,(p-1) \ge N/s$, by arguing as in {\it Case 1}, we conclude that $h_s \in L^\infty(\mathbb{R}^N)$ and we stop.
		If otherwise $(m_1)^*_s\,(p-1) < N/s$, by the HLS inequality \eqref{hls-inequality-con-asym} we infer
		\[
		K_{s/2} \ast (K_{s/2} \ast h_s)^{p'-1} \in L^{((m_1)^*_s\,(p-1))^*_s}(\mathbb{R}^N).     
		\]
		In turn, from the Euler-Lagrange equation, this implies that 
		\[
		h_s \in L^{m_2}(\mathbb{R}^N), 
		\]
		where we have set \[
		m_2 := (m-1)\,((m_1)^*_s\,(p-1))^*_s = \frac{N\,(m-1)\,(p-1)\,m_1}{N-s\,p\,m_1},
		\]
		and observe that $m_2 > m_1$, being this condition equivalent to $m > (p^*_s)'.$ In general, let $k \in \mathbb{N} \setminus \{0\}$ and assume that $m_i < N/s$ for every $1 \le i \le k$. We define  
		\begin{equation} \label{def-parametri-integrabilità}
			m_{k+1} :=  (m-1)\,((m_k)^*_s\,(p-1))^*_s = \frac{N\,(m-1)\,(p-1)\,m_k}{N-s\,p\,m_k}.
		\end{equation}
		We want to prove by induction that $	m_{k+1} > m_{k}.$
		Our inductive assumption reads as 
		\begin{equation} \label{hp-ind}
			m_{i+1} > m_i, \qquad \mbox{ for every } 1 \le i \le k-1. 
		\end{equation}
		Since $m > (p^*_s)'$, we have
		\[
		m > \frac{(N-s\,m)\,p}{N\,(p-1)} = \frac{(N-s\,m_1)\,p}{N\,(p-1)} > \frac{(N-s\,m_k)\,p}{N\,(p-1)},
		\]
		where in the last inequality we used \eqref{hp-ind}. In particular
		\[
		m >  \frac{(N-s\,m_k)\,p}{N\,(p-1)} \quad \Longleftrightarrow \quad m_{k+1} > m_k,
		\]
		as we can infer by recalling \eqref{def-parametri-integrabilità}. We now claim that \begin{equation} \label{esplode}
			\lim_{k \to \infty} m_k = +\infty.
		\end{equation}
		This would entail that for some $\overline{k}$ we must have $m_{\overline{k}} \ge N/s$, thus, by arguing as in {\it Case 1}, this would also end the proof. Since $m_k \ge m$, we have
		\[
		\frac{m_{k+1}}{m_k} = \frac{N\,(m-1)\,(p-1)}{N-s\,p\,m_k} \ge \frac{N\,(m-1)\,(p-1)}{N-s\,p\,m} = \frac{m_2}{m_1}, \qquad \mbox{ for every } k \ge 1.
		\] 
		Moreover, as we have already observed 
		\[
		\frac{m_2}{m_1} > 1 \quad \Longleftrightarrow \quad m > (p^*_s)'.
		\]
		The last two facts entail that   
		\[
		\liminf_{k \to \infty} \frac{m_{k+1}}{m_k} > 1,
		\]
		and so our claim \eqref{esplode}. 
	\end{proof} 
	In the next lemma, we can readily adapt the argument of \cite[Theorem 8]{CHMV} to infer H\"older regularity for extremals of \eqref{ineq-hls-2}. \KKK 
	\begin{lemma}[H\"older regularity] \label{lm:holder}
		Let  $1 < p < \infty$  and  $0<s\,p < N$. Let $m > (p^*_s)'$. For every \MMM extremal $h_s \in L_{+}^1(\mathbb{R}^N) \cap L^m(\mathbb{R}^N)$ of the HLS inequality \eqref{ineq-hls-2}, we have 
		\begin{itemize}
			\item  for $0 < s < 1/2$ 
			\[
			\begin{cases}
				\begin{aligned}
				\MMM 	h_s \in C^{0, 1}(\mathbb{R}^N), \qquad &\mbox{ if } m \le 2,\KKK\\
					\\
					h_s \in C^{0, \frac{1}{m-1}}(\mathbb{R}^N), \qquad &\mbox{ if } 2 < m < m^*, \\
					\\
					h_s \in C^{0, \gamma}(\mathbb{R}^N), \qquad &\mbox{ if } m^* \le m,
				\end{aligned}
			\end{cases}
			\]
			where $m^* = \dfrac{2 - 2s}{1-2s}$ and $\gamma \in \left(0, \dfrac{2\,s}{m-2}\right)$. \\
			\item for $s \ge 1/2$ 
			\[
			h_s \in C^{0, \gamma}(\mathbb{R}^N), \qquad \mbox{ where } \gamma = \min\left\{1, \frac{1}{m-1}\right\}.
			\]
		\end{itemize}
		Moreover, $h_{s}$ has $C^\infty-$regularity in the interior of its support.
	\end{lemma}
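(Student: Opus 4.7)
The Euler--Lagrange equation of Lemma \ref{lm:el-estremali-hls}, combined with the $L^\infty$ bound and the compactness of the support from Lemma \ref{lm:bootstrap-hls}, allows us to rewrite the identity as
\[
h_s \;=\; C_0\,\left(\mathcal K_{s,p}(h_s)-\mathcal C_s\right)_+^{1/(m-1)}\qquad\text{in }\mathbb R^N,
\]
with $h_s\in L^\infty(\mathbb R^N)$ compactly supported. Following the scheme of \cite[Theorem 8]{CHMV}, the plan is to reduce the problem to a Hölder bootstrap across the two Riesz convolutions in $\mathcal K_{s,p}(h_s)=K_{s/2}\ast\bigl(K_{s/2}\ast h_s\bigr)^{p'-1}$, and then to compose with the nonlinearity $t\mapsto t_+^{1/(m-1)}$.

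The analytic ingredient is the standard Hölder gain of the Riesz potential: if $f\in L^\infty(\mathbb R^N)$ has compact support and $f\in C^{0,\beta}$, then $K_{s/2}\ast f\in C^{0,\beta+s}$ whenever $\beta+s<1$ and $K_{s/2}\ast f$ is locally Lipschitz (or better) when $\beta+s\ge 1$. The intermediate nonlinearity $v\mapsto v^{p'-1}$, applied to $v:=K_{s/2}\ast h_s$, preserves Hölder exponents because $v$ is strictly positive and bounded on $\mathbb R^N$. Hence one full application of $\mathcal K_{s,p}$ improves the Hölder exponent of $h_s$ by $2s$, up to the Lipschitz threshold. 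The outer composition $t\mapsto (t-\mathcal C_s)_+^{1/(m-1)}$ is globally Lipschitz when $m\le 2$, but only $C^{0,1/(m-1)}$ when $m>2$, producing a loss precisely on $\partial\,\mathrm{supp}(h_s)$. One iteration thus sends the Hölder exponent $\beta$ of $h_s$ to $\beta'=\min\{1,1/(m-1)\}\cdot\min\{1,\beta+2s\}$. If $s\ge 1/2$, a single iteration already brings $\mathcal K_{s,p}(h_s)$ to Lipschitz, so $h_s\in C^{0,\min\{1,1/(m-1)\}}$ as claimed. If $s<1/2$ and $m>2$, the boundary recursion $\beta\mapsto(\beta+2s)/(m-1)$ has fixed point $\beta^*=2s/(m-2)$, and the bootstrap reaches the Lipschitz regime for $\mathcal K_{s,p}(h_s)$ exactly when $\beta^*+2s\ge 1$, equivalently $m\le m^*=(2-2s)/(1-2s)$. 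For $m<m^*$ the iteration saturates in finitely many steps with $\mathcal K_{s,p}(h_s)$ Lipschitz, and the boundary profile $h_s\sim d(\cdot,\partial\,\mathrm{supp}(h_s))^{1/(m-1)}$ gives $C^{0,1}$ if $m\le 2$ and $C^{0,1/(m-1)}$ if $2<m<m^*$. For $m\ge m^*$ the fixed point is approached only in the limit, and one obtains $h_s\in C^{0,\gamma}$ for every $\gamma<2s/(m-2)$.

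The interior $C^\infty$ regularity then follows from a standard Schauder-type bootstrap: on any compact $K\Subset\mathrm{supp}(h_s)^\circ$ the base $\mathcal K_{s,p}(h_s)-\mathcal C_s$ is bounded away from zero, so the outer nonlinearity is smooth there, while the Riesz potential smoothes interior regularity indefinitely. The main obstacle of the plan lies in the bookkeeping in the $s<1/2$ regime, in particular verifying that when $\mathcal K_{s,p}(h_s)$ is Lipschitz its normal derivative at $\partial\,\mathrm{supp}(h_s)$ is nonzero (so that the boundary profile $h_s\sim d^{1/(m-1)}$ is indeed sharp), and justifying the transition exponents around the critical value $m^*$, at which $1/(m-1)=2s/(m-2)=1-2s$.
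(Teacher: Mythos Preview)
Your approach is essentially the same bootstrap as the paper's, and the fixed-point analysis of the recursion $\beta\mapsto(\beta+2s)/(m-1)$ is exactly right. Two technical points deserve attention, and one remark is a red herring.

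First, the intermediate function $(K_{s/2}\ast h_s)^{p'-1}$ is \emph{not} compactly supported, so your stated smoothing principle (``$f\in C^{0,\beta}$ with compact support $\Rightarrow K_{s/2}\ast f\in C^{0,\beta+s}$'') does not apply directly to the second convolution. The paper handles this by invoking the Schauder-type estimate of \cite[Corollary 3.5]{RosOtonSer} for $(-\Delta)^{s/2}$, which requires only the global $L^\infty$ norm together with a \emph{local} H\"older norm of the right-hand side; this is what makes the iteration close. Relatedly, your claim that $v\mapsto v^{p'-1}$ preserves H\"older because $v$ is ``strictly positive'' is imprecise: $v=K_{s/2}\ast h_s$ vanishes at infinity, so one only gets $(K_{s/2}\ast h_s)^{p'-1}\in C^{0,\gamma}_{\rm loc}$, which is exactly what the Schauder estimate needs.

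Second, your ``main obstacle'' about the nonvanishing normal derivative at $\partial\,\mathrm{supp}(h_s)$ is unnecessary. The lemma only asserts membership in $C^{0,1/(m-1)}$ (or $C^{0,\gamma}$), not sharpness of that exponent. Once $\mathcal K_{s,p}(h_s)$ is locally Lipschitz, composing with $t\mapsto (t-\mathcal C_s)_+^{1/(m-1)}$ (which is globally $C^{0,\min\{1,1/(m-1)\}}$) and using the compact support of $h_s$ gives the claimed global H\"older regularity directly; no boundary profile analysis is required.
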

	\begin{proof}
		First, we assume $0 < s < 1/2$. 
		\MMM We will take advantage of the embeddings between Bessel potential spaces, fractional Sobolev spaces and  H\"older spaces.
	\MMM We briefly recall that for $1 \le q < \infty$ the fractional Sobolev space $W^{s,q}(\mathbb{R}^N)$ is  given by 
	\[
	W^{s,q}(\mathbb{R}^N) = \left\{u \in L^q(\mathbb{R}^N) : [u]_{W^{s,q}(\mathbb{R}^N)} < \infty\right\},
	\]
	where $[\,\cdot\,]_{W^{s,q}(\mathbb{R}^N)}$ denotes the Gagliardo-Slobodecki\u{\i} seminorm 
	\[
	[u]_{W^{s,q}(\mathbb{R}^N)} := \left(\iint_{\mathbb{R}^N \times \mathbb{R}^N} \frac{|u(x) - u(y)|^q}{|x-y|^{N+s\,q}}\,dxdy\right)^{\frac{1}{q}}.
	\]
	 The Bessel potential spaces $\mathcal L^{s,q}(\mathbb R^N)$, where $1< q <\infty$, are defined through the Fourier transform, see for instance \cite[Section V.3]{Steinbook}, \cite[Section 2.2.2]{Triebel1}, \cite[Section 27.3]{Samkobook}. They	can be characterized as 
	\[
	\mathcal{L}^{s,q}(\mathbb{R}^N) = \{u \in L^q(\mathbb{R}^N): u = K_{s/2} \ast h, \mbox{ for some } h \in L^q(\mathbb{R}^N)\},
	\]
	see for example \cite[Theorem 2]{Stein61} (or also \cite[Theorem 26.8, Theorem 27.3]{Samkobook}).
		\KKK By recalling \cite[Theorem 5, pag. 155]{Steinbook} and \cite[Theorem 4.47]{Demengelbook}, the following continuous embeddings holds true: 
		\begin{align} \label{embedding1}
			\mathcal{L}^{s,q}(\mathbb{R}^N) \hookrightarrow W^{s,q}(\mathbb{R}^N), \qquad \mbox{ for } q \ge 2,
		\end{align}
		and 
		\begin{align} \label{embedding2}
			W^{s,q}(\mathbb{R}^N) \hookrightarrow C^{0, \gamma}(\mathbb{R}^N), \qquad \mbox{ where } \gamma = s-N/q, \mbox{ for } q > N/s.
		\end{align}
		Let $h_s$ be as in the statement. \MMM By Lemma \ref{lm:extramls-hls}, we can assume it is radially nonincreasing. By Lemma  \ref{lm:bound-l-infinito-riesz}, Lemma \ref{lm:el-estremali-hls} and Lemma \ref{lm:bootstrap-hls}, we have that 
		\begin{equation} \label{inclusione-fondamentale}
			h_s \in L^1(\mathbb{R}^N) \cap L^\infty(\mathbb{R}^N) \quad \mbox{ and }\quad K_{s/2} \ast h_s \in L^q(\mathbb{R}^N), \quad \mbox{for every } N/(N-s) < q \le \infty.
		\end{equation}
		In particular, since $N/(N-s) < N/s$ for every $0 < s < 1/2$, we have that   
		\[
		K_{s/2} \ast h_s \in \mathcal{L}^{s,q}(\mathbb{R}^N), \qquad \mbox{ for every } q > N/s.
		\]
		In view of the embeddings \eqref{embedding1} and \eqref{embedding2}, this entails that 
		\begin{equation*} \label{boot-stima-prelim0}
			K_{s/2} \ast h_s \in C^{0, \gamma}(\mathbb{R}^N), \qquad \mbox{ where } \gamma = s - N/q, \mbox{ for } q > N/s.
		\end{equation*}
		 Since $h_s$ is radially nonincreasing, so is of $K_{s/2} \ast h_s$, see \cite{Can}. Moreover, $K_{s/2} \ast h_s$ is clearly positive, bounded and vanishing at infinity. In particular it is bounded away from zero on compact sets. For these reasons we get 
		 $(K_{s/2} \ast h_s)^{p'-1} \in C^{0, \gamma}_{\rm loc}(\mathbb R^N)$. Therefore for every $R>0$, by using \eqref{inclusione-fondamentale}, Lemma \ref{lm:bound-l-infinito-riesz} and the identity  \KKK
		\begin{equation*} \label{eqn:potenziale-frac-lapl}
			(-\Delta)^{s/2}\left(K_{s/2} \ast (K_{s/2} \ast h_s)^{p'-1}\right) = (K_{s/2} \ast h_s)^{p'-1} \qquad \mbox{ in } B_{2R},
		\end{equation*} 
		from \cite[Corollary 3.5]{RosOtonSer} we can infer that  
		\begin{equation}  \label{schauder}
			\begin{split}
				\|K_{s/2} \ast (K_{s/2} \ast &h_s)^{p'-1}\|_{C^{0, \gamma + s}(B_{R})} \le \\ &\le c \left(\|K_{s/2} \ast (K_{s/2} \ast h_s)^{p'-1}\|_{L^\infty(\mathbb{R}^N)} + \|(K_{s/2} \ast h_s)^{p'-1}\|_{C^{0, \gamma}(B_{2R})}\right),
			\end{split}
		\end{equation}
		for some $c = c(N,s, R) > 0$ (notice that since $s<1/2$, then $\gamma+s$ is not an integer as $\gamma=s-N/q$\KKK) and so
		\[
		K_{s/2} \ast (K_{s/2} \ast h_s)^{p'-1} \in C^{0, \gamma+s}(B_{R}), \qquad \mbox{ where } \gamma = s - N/q, \mbox{ for } q > N/s.
		\]
		From the Euler-Lagrange equation provided by Lemma \ref{lm:el-estremali-hls}, this entails that $h_s^{m-1} \in C^{0, \gamma + s}(B_R)$, for $R > 0$. By using the fact that $h_s$ has compact support, we infer 
		\begin{equation} \label{boot-stima-prelim-2}
			h_s \in C^{0, (\gamma + s)\alpha}(\mathbb{R}^N), \quad \mbox{ where } \alpha = \min\left\{1, \frac{1}{m-1}\right\} \mbox{ and } \gamma = s - N/q, \mbox{ for } q > N/s.
		\end{equation}
		Now we distinguish three cases.
		\vskip.2cm \noindent
		{\it Case 1: $\MMM (p_s^*)'\KKK < m \le 2.$} By using \eqref{inclusione-fondamentale} and \eqref{boot-stima-prelim-2}, from \cite[Corollary 3.5]{RosOtonSer} we have that 
		\[
		K_{s/2} \ast h_s \in C^{0, \gamma + 2s}_{\rm loc}(\mathbb{R}^N),
		\]
		if $\gamma + 2s$ is not an integer.  Since $K_{s/2} \ast h_s$  is bounded and bounded away from zero on compact sets, as before we deduce\KKK  
		\[
		(K_{s/2} \ast h_s)^{p'-1} \in C^{0, \gamma + 2s}_{\rm loc}(\mathbb{R}^N).
		\]
		If $\gamma + 2s > 1$, we get $(K_{s/2} \ast h_s)^{p'-1} \in C^{0,1}_{\rm loc}(\mathbb{R}^N)$ and so also $K_{s/2} \ast (K_{s/2} \ast h_s)^{p'-1} \in C^{0, 1}_{\rm loc}(\mathbb{R}^N)$. Thus, in light of Lemma \ref{lm:el-estremali-hls}, using that $m\le 2$ and the compactness of the support of $h_s$, \KKK we get $h_s \in C^{0, 1}(\mathbb{R}^N)$ as desired.
		On the other hand,  if $\gamma + 2s < 1$ we newly apply \eqref{schauder} and \cite[Corollary 3.5]{RosOtonSer} obtaining that
		\[
		K_{s/2} \ast (K_{s/2} \ast h_s)^{p'-1} \in C^{0, \gamma + 3s}_{\rm loc}(\mathbb{R}^N),
		\]
		if $\gamma + 3\,s$ is not an integer. Observe that we gained $2s$ derivatives starting from \eqref{boot-stima-prelim-2}, \MMM and the gain in regularity depends therefore on $s$ but not on $p$. In other words, the regularity gain provided by the nonlinear potential $\mathcal K_{s,p}$ does not depend on $p$ and it is the same of the linear potential $\mathcal K_{s,2}$. Thus, the proof gets reduced to the case $p=2$ which is given in \cite[Theorem 8]{CHMV}.  For this reason, we just sketch the conclusion of the argument, omitting some details. \KKK We take an integer $j \ge 1$ such that 
		\begin{equation} \label{scelta_h}
			\frac{1}{2\,(j+1)} < s < \frac{1}{2\,j},
		\end{equation}
		and set 
		$
		\gamma_j := \gamma + (j-1)\,2\,s = 2\,s\,j - {N}/{q}, 
		$
		where $q > N/s$ is choosen large enough so that  $
			1-2\,s <\gamma_j < 1.$
		This is a feasible choice thanks to \eqref{scelta_h}. By iterating the previous argument $j$ times starting from \eqref{boot-stima-prelim-2}, we get  $K_{s/2} \ast (K_{s/2} \ast h_s)^{p'-1} \in C^{0, \gamma_j + 2s}_{\rm loc}(\mathbb{R}^N) \subseteq C^{0, 1}_{\rm loc}(\mathbb{R}^N),$ being $\gamma_j + 2s > 1$ by construction. By using the Euler-Lagrange equation provided by Lemma \ref{lm:el-estremali-hls} and the compactness of ${\rm supp}(h_s)$ from Lemma \ref{lm:bootstrap-hls} and since $m\le 2$, we conclude that
		$h_s \in C^{0,1}(\mathbb{R}^N)$. 
		\vskip.2cm \noindent
		{\it Case 2: $2 < m < m^*$.}
		Starting from \eqref{boot-stima-prelim-2}, we can improve the H\"older regularity of $h_s$ by a { bootstrap argument}, as in the previous case. We give the details of one iteration, in order to clarify that the same argument used in the proof of \cite[Theorem 8]{CHMV} still holds. 
		In light of \eqref{inclusione-fondamentale} and \eqref{boot-stima-prelim-2}, we can apply \cite[Corollary 3.5]{RosOtonSer} to infer that  
		\[
		(K_{s/2} \ast h_s)^{p'-1} \in C^{0, \frac{\gamma +s}{m-1} + s}_{\rm loc}(\mathbb{R}^N), \qquad \mbox{ where } \gamma = s - N/q, \mbox{ for } q > N/s,
		\]
		if $(\gamma +s)/(m-1) + s$ is not an integer. By newly applying \cite[Corollary 3.5]{RosOtonSer}, we obtain \[
		K_{s/2} \ast (K_{s/2} \ast h_s)^{p'-1} \in C^{0, \frac{\gamma +s}{m-1} + 2\,s}_{\rm loc}(\mathbb{R}^N),
		\]
		if $(\gamma +s)/(m-1) + 2\,s$ is not an integer. By reasoning as in the previous case, if $\gamma + 2\,s/(m-1) + 2\,s > 1$, we have $h^{m-1}_s \in C^{0, 1}(\mathbb{R}^N)$ and so  $h_s \in C^{0, \frac{1}{m-1}}(\mathbb{R}^N)$. On the other hand, if $\gamma + 2\,s/(m-1) + 2\,s < 1$, by always using the Euler-Lagrange equation provided by Lemma \ref{lm:el-estremali-hls} and the fact that $h_s$ has compact support, we obtain 
		$
		h_s^{m-1} \in C^{0, \frac{\gamma +s}{m-1} + 2\,s}(\mathbb{R}^N),
		$
		which entails that 
		\[
		h_s \in C^{0, \gamma_1}(\mathbb{R}^N), \qquad \mbox{ where }  \gamma_1 = \frac{\gamma +s}{(m-1)^2} + \frac{2\,s}{m-1},  
		\]
		if $(\gamma +s)/(m-1) + 2\,s$ is not an integer, where $\gamma = s - N/q$, for  $q > N/s$. In general, by iterating this argument following \cite[Theorem 8]{CHMV}, we can improve the H\"older regularity of $h_s$ to infer that $h^{m-1}_s \in C^{0, 1}(\mathbb{R}^N)$, which yields \KKK $h_s \in C^{0, \frac{1}{m-1}}(\mathbb{R}^N),$ as desired. 
		
		\vskip.2cm \noindent
		{\it Case 3: $m \ge m^*.$}
		\MMM We can proceed with the same bootstrap argument, however without reaching Lipschitz regularity of $h_s^{m-1}$. \KKK We observe that \cite[Remark 2]{CHMV}, to which we refer, still holds and gives the desired result. 
		
		\vskip.2cm 
		
		In order to conclude, we observe that the case $1/2 \le s $ is simpler than the case $0 < s < 1/2$ and can be treated in the same way, up to some minor modifications, as done in \cite[Theorem 8]{CHMV}. Eventually, by using the same argument of \cite[Theorem 10]{CHMV}, from Lemma \ref{lm:el-estremali-hls} and Lemma \ref{lm:bootstrap-hls}, we obtain the $C^\infty-$regularity of $h_{s}$ in the interior of its support.
	\end{proof}
	We end this section by remarking that the extremals of \eqref{extremal-functional-hls} are always in $W^{1,1}(\mathbb R^N)$: 
	\begin{corollary}\label{W11}
		Let  $1 < p < \infty$  and  $0<s\,p < N$. Let $m > (p^*_s)'$. For every  function $h_s \in L^1_+(\mathbb{R}^N) \cap L^m(\mathbb{R}^N)$ attaining the supremum in \eqref{extremal-functional-hls}, we have $h_{s} \in W^{1,1}(\mathbb{R}^N)$.
	\end{corollary}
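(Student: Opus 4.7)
The plan is to exploit the structural information already established: by Lemma \ref{lm:extramls-hls} I may assume $h_s$ is radially nonincreasing about the origin, by Lemma \ref{lm:bootstrap-hls} I have $h_s \in L^\infty(\mathbb{R}^N)$ with compact support, and by Lemma \ref{lm:holder} I know that $h_s$ is continuous on $\mathbb{R}^N$ and $C^\infty$ in the interior of its support. Radial monotonicity forces $\operatorname{supp}(h_s) = \overline{B_R(0)}$ for some $R > 0$, and I write $h_s(x) = \xi(|x|)$ for a nonincreasing continuous function $\xi \colon [0,+\infty) \to [0,+\infty)$ that is smooth on $(0, R)$, vanishes on $[R,+\infty)$, and satisfies $\xi(0) = \|h_s\|_\infty$.

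The first step is to bound the $L^1$ norm of the classical gradient inside $B_R$, where $h_s$ is smooth. Since $\nabla h_s(x) = \xi'(|x|)\,x/|x|$ pointwise on $B_R \setminus \{0\}$, spherical coordinates give
\[
\int_{B_R} |\nabla h_s|\,dx = N\,\omega_N \int_0^R r^{N-1}\,(-\xi'(r))\,dr \le N\,\omega_N\,R^{N-1}\int_0^R (-\xi'(r))\,dr.
\]
Applying the fundamental theorem of calculus on $[\varepsilon, R-\varepsilon]$, letting $\varepsilon \to 0$ and using continuity of $\xi$ at the endpoints, I obtain $\int_0^R (-\xi'(r))\,dr = \xi(0) - \xi(R) = \|h_s\|_\infty$, hence $\int_{B_R} |\nabla h_s|\,dx \le N\,\omega_N\,R^{N-1}\,\|h_s\|_\infty < +\infty$.

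The second step is to identify the classical gradient (extended by $0$ outside $\overline{B_R}$) with the distributional gradient of $h_s$ on all of $\mathbb{R}^N$. For $\varphi \in C^\infty_c(\mathbb{R}^N;\mathbb{R}^N)$, I integrate by parts on the approximating domain $B_{R-\varepsilon}$, where $h_s$ is smooth up to the boundary:
\[
-\int_{B_{R-\varepsilon}} h_s\,\operatorname{div}\varphi\,dx = \int_{B_{R-\varepsilon}} \nabla h_s \cdot \varphi\,dx - \int_{\partial B_{R-\varepsilon}} h_s\,\varphi\cdot\nu\,d\mathcal{H}^{N-1}.
\]
Passing to the limit $\varepsilon \to 0$: the left hand side converges to $-\int_{\mathbb{R}^N} h_s \operatorname{div}\varphi\,dx$ by dominated convergence, the first term on the right converges to $\int_{B_R} \nabla h_s \cdot \varphi\,dx$ thanks to the $L^1$ bound of the previous step, and the boundary integral vanishes because $h_s$ is continuous with $h_s(R) = 0$. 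This yields the defining identity for the weak gradient, establishing that $h_s \in W^{1,1}(\mathbb{R}^N)$.

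The only delicate point is ensuring that no singular contribution gets concentrated on the sphere $\partial B_R$ when assembling the distributional gradient from the classical one inside $B_R$; this is precisely what the continuity of $h_s$ across $\partial B_R$ (with the values from both sides agreeing at zero) rules out, and this continuity is exactly the qualitative content of Lemma \ref{lm:holder} at the boundary of the support.
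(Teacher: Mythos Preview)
Your proof is correct and follows the natural route: radial monotonicity plus boundedness plus compact support bounds the total variation of the profile $\xi$, while the continuity from Lemma~\ref{lm:holder} kills any jump part at $\partial B_R$, so the classical gradient inside $B_R$ (extended by zero) is the distributional gradient. The paper does not give a self-contained argument here but simply refers to \cite[Proposition~2.10]{HMVV}, citing Lemmas~\ref{lm:extramls-hls}, \ref{lm:el-estremali-hls} and~\ref{lm:bootstrap-hls}; the argument there is of the same flavor, and the only cosmetic difference is that you invoke Lemma~\ref{lm:holder} directly for the continuity across $\partial B_R$, whereas the paper's chain of citations obtains this through the Euler--Lagrange equation of Lemma~\ref{lm:el-estremali-hls} (on which Lemma~\ref{lm:holder} is itself built).
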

	\begin{proof}
		The desired conclusion follows from Lemma \ref{lm:extramls-hls}, Lemma \ref{lm:el-estremali-hls} and Lemma \ref{lm:bootstrap-hls} by arguing as in \cite[Proposition 2.10]{HMVV}. 
	\end{proof}
	\section{Minimizers of the energy functional } \label{sec:3}
	In this section, we analyze minimizers of functional $\mathcal{F}_{s,p}$ over $\mathcal Y_M$ thus concluding the proof of Theorem \ref{main1}. We start by considering the {\it diffusion dominated regime}, that is the case $m > m_c$.
	\begin{proposition}[Diffusion dominated regime] \label{prop:dom-regime}
	Let $1 < p < \infty$ and  $0<s\,p < N$. Let $m > m_c$ and let $\chi, M > 0$. Define the functional
		\begin{equation*} \label{funzionale-ausiliario}
			\mathcal{Y}_M \ni \rho \mapsto \Lambda(\rho): =  \left(\frac{\| K_{s/2} \ast \rho \|_{p'}^{p'\,(m-1)}}{\|\rho\|_m^{m\,(m_c-1)}} \right)^{\frac{1}{m-m_c}},		\end{equation*}
		which is invariant by mass-invariant dilations. Let moreover
		\[\kappa: = \left(\frac{\chi}{p'}\right)^{\frac{m-1}{m-m_c}} \left(\frac{p'}{p^*_s}\right)^{\frac{m_c-1}{m-m_c}} \left(\frac{m_c - m}{m-1}\right). 
\]
		 For every  $\rho \in \mathcal{Y}_M$, there exists a unique positive number $\lambda_{*}(\rho)$, called the optimal dilation factor of $\rho$, such that 
		\begin{equation*} \label{minimality}
			\mathcal{F}_{s,p}(\rho^\lambda) \ge \mathcal{F}_{s,p}(\rho^{\lambda_{*}(\rho)}) = \kappa\,\Lambda(\rho) \quad \mbox{for every } \lambda > 0, \qquad\mbox{with  equality only if $\lambda = \lambda_{*}(\rho)$.}
		\end{equation*}
		It is expressed as
		\begin{equation} \label{fattore-dilatazione-rho00}
			\lambda_{*}(\rho) = \left(\frac{\chi}{p^*_s}\,\frac{ \|K_{s/2} \ast \rho\|_{p'}^{p'} }{\|\rho\|_m^m }\right)^{\frac{1}{N\,(m-m_c)}}.
		\end{equation}

	\end{proposition}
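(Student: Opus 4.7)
The proof is an elementary one-variable calculus computation, so the plan is just to set it up carefully and keep track of the constants. Write
$$\mathcal{F}_{s,p}(\rho^\lambda)=A\,\lambda^{N(m-1)}-B\,\lambda^{N(m_c-1)},\qquad A:=\frac{1}{m-1}\|\rho\|_m^m,\quad B:=\frac{\chi}{p'}\|K_{s/2}\ast\rho\|_{p'}^{p'},$$
using the scaling of the two pieces of $\mathcal F_{s,p}$ already recorded in the introduction. Since $1<p<\infty$ and $0<sp<N$ force $p'>N/(N-s)$, one has $m_c-1>0$, and from $m>m_c$ both exponents $N(m-1)>N(m_c-1)$ are strictly positive. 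Consequently $g(\lambda):=\mathcal F_{s,p}(\rho^\lambda)$ satisfies $g(0)=0$, $g(\lambda)\to+\infty$ as $\lambda\to\infty$, and $g(\lambda)<0$ for small $\lambda>0$ (the subdominant term has the negative sign). A direct differentiation shows $g$ is strictly convex in the variable $\lambda^{N(m_c-1)}$, hence has a unique positive critical point, which is its global minimizer.

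Solving $g'(\lambda_*(\rho))=0$ gives
$$\lambda_*(\rho)^{N(m-m_c)}=\frac{B\,(m_c-1)}{A\,(m-1)}=\frac{\chi\,(m_c-1)}{p'}\,\frac{\|K_{s/2}\ast\rho\|_{p'}^{p'}}{\|\rho\|_m^m}.$$
To recover the form claimed in the statement, I will use the identity
$$\frac{m_c-1}{p'}=1-\frac{1}{p'}-\frac{s}{N}=\frac{1}{p}-\frac{s}{N}=\frac{N-sp}{Np}=\frac{1}{p^*_s},$$
which yields exactly \eqref{fattore-dilatazione-rho00}.

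Plugging $\lambda_*(\rho)$ back into $g$ and using $A\lambda_*^{N(m-m_c)}(m-1)=B(m_c-1)$ gives
$$g(\lambda_*)=B\,\lambda_*^{N(m_c-1)}\!\left[\frac{m_c-1}{m-1}-1\right]=\frac{m_c-m}{m-1}\,B\,\lambda_*^{N(m_c-1)}.$$
Substituting $\lambda_*^{N(m_c-1)}=\bigl(B(m_c-1)/(A(m-1))\bigr)^{(m_c-1)/(m-m_c)}$ and regrouping $A$, $B$, $m-1$, $m_c-1$ via the identity $(m_c-1)/p'=1/p^*_s$ derived above, one arrives at $\kappa\,\Lambda(\rho)$ with $\kappa$ and $\Lambda$ as in the statement. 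Finally, the mass-invariance of $\Lambda$ follows at once by inserting the scalings $\|\rho^\lambda\|_m^m=\lambda^{N(m-1)}\|\rho\|_m^m$ and $\|K_{s/2}\ast\rho^\lambda\|_{p'}^{p'}=\lambda^{N(m_c-1)}\|K_{s/2}\ast\rho\|_{p'}^{p'}$ into the definition and observing that the exponents cancel.

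There is no real obstacle here: the only care required is the bookkeeping of the exponents $(m-1)/(m-m_c)$ and $(m_c-1)/(m-m_c)$ when simplifying $g(\lambda_*)$, and noticing the crucial simplification $(m_c-1)/p'=1/p^*_s$ which matches the paper's normalization of $\lambda_*$ and of the constant $\kappa$.
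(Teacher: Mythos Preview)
Your proof is correct and follows essentially the same route as the paper: write $\mathcal F_{s,p}(\rho^\lambda)$ as a two-term function of $\lambda$, differentiate, solve for the unique critical point, and substitute back. You add a couple of details the paper leaves implicit---the convexity argument for uniqueness of the critical point, the explicit verification of the identity $(m_c-1)/p'=1/p_s^*$, and the check of dilation invariance of $\Lambda$---but the strategy and the substance are the same.
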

	\begin{proof}
		Let $\rho \in \mathcal{Y}_M$. For $\lambda  >0$, we consider the function given by 
		\begin{equation} \label{defi:functional-lambda}
			\lambda \mapsto f_{\rho}(\lambda) := \mathcal{F}_{s,p}(\rho^\lambda) =  \frac{\lambda^{N\,(m-1)}}{m-1}\,\|\rho\|_m^m - \lambda^{N\,(m_c-1)}\,\frac{\chi}{p'}\,\|K_{s/2} \ast \rho\|_{p'}^{p'}.
		\end{equation}
		Recall that, since $1 < p < N/s$, we have $N/(N-s) <p' < \infty,$ thus $m_c = p'(1-s/N) > 1.$ By optimizing with respect to $\lambda$, we get 
		\begin{equation}\label{criticality}
		\frac{{\rm d}}{{\rm d}\lambda} \mathcal{F}(\rho^\lambda) = N\,\lambda^{N\,(m-1) - 1}\,\|\rho\|_m^m - \frac{ N\,\left(m_c-1\right)\,\chi}{p'} \lambda^{N\,(m_c-1)-1}\,\|K_{s/2} \ast \rho\|_{p'}^{p'} = 0.
		\end{equation}
		The unique extremal is given by \eqref{fattore-dilatazione-rho00}. \KKK
		 Clearly, at $\lambda_{*}(\rho)$, the function given by  \eqref{defi:functional-lambda} attains a global minimum, and  notice also that we \KKK have
		\begin{equation*} \label{eqn:limiti-lambda}
			\lim_{\lambda \to +\infty} \mathcal{F}_{s,p}(\rho^\lambda) = +\infty \qquad \mbox{ and } \qquad \lim_{\lambda \to 0} \mathcal{F}_{s,p}(\rho^\lambda) = 0. 
		\end{equation*}
		Furthermore,  by using \eqref{fattore-dilatazione-rho00} \KKK we can write
		\begin{equation*} \label{eqn:conto1}
			\begin{split}
				\mathcal{F}_{s,p}(\rho^{\lambda_{*}(\rho)}) =& \frac{1}{m-1} \left(\frac{\chi}{p^*_s}\frac{\|K_{s/2} \ast \rho\|_{p'}^{p'}}{\|\rho\|_m^m}\right)^{\frac{m-1}{m-m_c}} \|\rho\|^m_m - \frac{\chi}{p'} \left(\frac{\chi}{p^*_s}\frac{\|K_{s/2} \ast \rho\|_{p'}^{p'}}{\|\rho\|_m^m}\right)^{\frac{m_c-1}{m-m_c}} \|K_{s/2} \ast \rho\|_{p'}^{p'} \\
				=& \kappa\,\Lambda(\rho)<0\KKK,
			\end{split}
		\end{equation*}
		where $\Lambda$ and $\kappa$ are defined as in the statement.
	\end{proof}
	

	For the {\it fair competition regime}, that is $m = m_c$, we have the following two sided-estimate for the energy, which extends \cite[Proposition 3.4]{BCL}. 
	\begin{proposition}[Fair competition regime] \label{prop:fair-comp}
		Let $1 < p < \infty$ and  $0<s\,p < N$. Let $\chi, M > 0$. For every $\rho \in \mathcal{Y}_M$, we have 
		\begin{equation*} \label{stima-doppia}
			\frac{\chi}{p'}\,H^*_{m_c,s}\,\left(M_c^{p' \frac{s}{N}} - M^{p'\frac{s}{N}}\right) \|\rho\|_{m_c}^{m_c} \le \mathcal{F}_{s,p}(\rho) \le \frac{\chi}{p'}\,H^*_{m_c,s}\,\left(M_c^{p' \frac{s}{N}} + M^{p'\frac{s}{N}}\right) \|\rho\|_{m_c}^{m_c},
		\end{equation*} 
		where $H^*_{m_c,s}$ is given by \eqref{extremal-functional-hls} and $M_c$ is given by \eqref{massa-critica}.\KKK
	\end{proposition}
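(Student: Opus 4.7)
The plan is to reduce everything to the HLS inequality \eqref{ineq-hls-2} evaluated at $m = m_c$, combined with two elementary algebraic identities that hold precisely at the fair competition exponent.

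\textbf{Step 1: algebraic simplifications at $m = m_c$.} First I would verify the two identities
\[
\frac{1}{m_c - 1} = \frac{p^*_s}{p'}, \qquad p'\,\vartheta_0(m_c) = \frac{p'\,s}{N}, \qquad p'\,(1-\vartheta_0(m_c)) = m_c,
\]
which follow from $m_c = p'(N-s)/N$ and the formula \eqref{esponente-interpol} for $\vartheta_0$ by direct computation using $p' = p/(p-1)$. Consequently, Corollary \ref{cor:hls} (at $m = m_c$), or equivalently the optimal form using $H^*_{m_c,s}$ from \eqref{extremal-functional-hls}, gives for every $\rho \in \mathcal Y_M$
\[
\|K_{s/2} \ast \rho\|_{p'}^{p'} \;\le\; H^*_{m_c, s}\;M^{p'\,s/N}\;\|\rho\|_{m_c}^{m_c}.
\]

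\textbf{Step 2: rewriting the free energy.} Using the first identity of Step 1, the functional at $m = m_c$ becomes
\[
\mathcal F_{s,p}(\rho) \;=\; \frac{p^*_s}{p'}\,\|\rho\|_{m_c}^{m_c} \;-\; \frac{\chi}{p'}\,\|K_{s/2}\ast\rho\|_{p'}^{p'}.
\]
The definition \eqref{massa-critica} of $M_c$ reads $\chi\,H^*_{m_c,s}\,M_c^{p's/N} = p^*_s$, so the coefficient of $\|\rho\|_{m_c}^{m_c}$ can be written as $\frac{\chi}{p'}\,H^*_{m_c,s}\,M_c^{p's/N}$. Thus
\[
\mathcal F_{s,p}(\rho) \;=\; \frac{\chi}{p'}\,H^*_{m_c,s}\,M_c^{p's/N}\,\|\rho\|_{m_c}^{m_c} \;-\; \frac{\chi}{p'}\,\|K_{s/2}\ast\rho\|_{p'}^{p'}.
\]

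\textbf{Step 3: lower and upper bounds.} For the lower bound, substitute the HLS estimate of Step 1 into the negative term:
\[
\mathcal F_{s,p}(\rho) \;\ge\; \frac{\chi}{p'}\,H^*_{m_c,s}\,\big(M_c^{p's/N} - M^{p's/N}\big)\,\|\rho\|_{m_c}^{m_c}.
\]
For the upper bound, use the trivial inequality $-a \le a$ applied to the non-negative quantity $a = \frac{\chi}{p'}\|K_{s/2}\ast\rho\|_{p'}^{p'}$, and then apply the HLS estimate of Step 1 once more:
\[
\mathcal F_{s,p}(\rho) \;\le\; \frac{\chi}{p'}\,H^*_{m_c,s}\,M_c^{p's/N}\,\|\rho\|_{m_c}^{m_c} + \frac{\chi}{p'}\,H^*_{m_c,s}\,M^{p's/N}\,\|\rho\|_{m_c}^{m_c}.
\]
Combining these two displays yields the claimed two-sided estimate.

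There is no real obstacle: the proof is essentially a direct application of the HLS inequality once one notices the critical-exponent identities in Step 1. The only part requiring some care is verifying that $p'(1-\vartheta_0)$ equals exactly $m_c$ at $m = m_c$, which is what makes the HLS bound collapse to a factor of $\|\rho\|_{m_c}^{m_c}$ times a pure power of the mass, enabling the estimate to match the diffusive term of $\mathcal F_{s,p}$.
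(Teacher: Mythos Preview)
Your proof is correct and follows essentially the same route as the paper: both arguments reduce to applying the HLS inequality \eqref{ineq-hls-2} at $m=m_c$ together with the definition of $M_c$, and your explicit verification of the identities $p'\vartheta_0(m_c)=p's/N$, $p'(1-\vartheta_0(m_c))=m_c$, $1/(m_c-1)=p_s^*/p'$ is exactly what the paper uses implicitly. The only cosmetic difference is in the upper bound, where the paper simply drops the nonnegative interaction term (yielding the even sharper bound $\tfrac{\chi}{p'}H^*_{m_c,s}M_c^{p's/N}\|\rho\|_{m_c}^{m_c}$, which is trivially below the stated one), whereas you use $-a\le a$ followed by a second application of HLS; both are valid.
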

	\begin{proof}
		For every $\rho \in \mathcal{Y}_M$, by recalling \eqref{extremal-functional-hls}, we get 
		\[
		\begin{split}
			\mathcal{F}_{s,p}(\rho) = \frac{\|\rho\|^{m_c}_{m_c}}{m_c-1} - \frac{\chi}{p'} \|K_{s/2} \ast \rho\|_{p'}^{p'} &\ge \left(\frac{1}{m_c - 1} - \frac{\chi}{p'}\,H^*_{m_c,s}\,M^{p' \frac{s}{N}}\right)\|\rho\|^{m_c}_{m_c} \\
			&= \frac{\chi}{p'}\,H^*_{m_c,s}\left(M_c^{p' \frac{s}{N}} - M^{p'\frac{s}{N}}\right) \|\rho\|_{m_c}^{m_c},
		\end{split}
		\]
		where $M_c$ is given by \eqref{massa-critica}. On the other hand, by recalling \eqref{mcintro}, we also have  
		\[
		\mathcal{F}_{s,p}(\rho) \le \frac{\chi}{p'}\,H^*_{m_c,s}\left(M_c^{p' \frac{s}{N}} + M^{p'\frac{s}{N}}\right) \|\rho\|_{m_c}^{m_c},
		\]
		which yields the claimed estimate.
	\end{proof}

	\begin{proposition}[Infimum of $\mathcal{F}_{s,p}$]  \label{prop:optimal-dilation}
		Let $1 < p < \infty$,  $0<s\,p < N$ and let $\chi  >0$. For every $M > 0$, we have 
		\begin{equation*}
			\inf_{\rho \in \mathcal{Y}_M} \mathcal{F}_{s,p}(\rho) = \begin{cases}
				-\infty, \quad &\mbox{ if } 1 < m < m_c,\\
				\\
				\nu_s, \quad &\mbox{ if } m = m_c, \\
				\\
				\mu_s, \quad &\mbox{ if } m > m_c,
			\end{cases}
		\end{equation*}
		for some $\mu_s = \mu\left(N, p, s, \chi, m, M\right) < 0$, where $\nu_s = \nu_s\left(N,p,s, \chi, M\right)$ is given by 
		\[
		\nu_s = 
		\begin{cases}
			0, \qquad &\mbox{ if } 0< M \le M_c,\\
			\\
			-\infty, \qquad &\mbox{ if } M > M_c,
		\end{cases}
		\]
		being $M_c$ the critical mass introduced in \eqref{massa-critica}. 
	\end{proposition}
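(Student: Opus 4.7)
The three cases will be handled separately, each by combining a scaling analysis of the functional $\mathcal{F}_{s,p}(\rho^\lambda)$ written as in \eqref{defi:functional-lambda} with the HLS type inequality \eqref{ineq-hls-2}.

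\textbf{Case $1 < m < m_c$ (aggregation dominated).} Here I would simply fix any nontrivial $\rho \in \mathcal{Y}_M$ (e.g., a normalized characteristic function of a ball centered at the origin) and examine
\[
\mathcal{F}_{s,p}(\rho^\lambda) = \frac{\lambda^{N(m-1)}}{m-1}\|\rho\|_m^m - \lambda^{N(m_c-1)}\,\frac{\chi}{p'}\|K_{s/2}\ast\rho\|_{p'}^{p'}
\]
as $\lambda \to +\infty$. Since $N(m-1) < N(m_c-1)$ and both $\|\rho\|_m^m$ and $\|K_{s/2}\ast\rho\|_{p'}^{p'}$ are strictly positive, the second term dominates with the correct sign, so $\mathcal{F}_{s,p}(\rho^\lambda) \to -\infty$ and the infimum equals $-\infty$.

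\textbf{Case $m > m_c$ (diffusion dominated).} By Proposition \ref{prop:dom-regime}, any $\rho \in \mathcal{Y}_M$ admits a dilation $\rho^{\lambda_*(\rho)} \in \mathcal{Y}_M$ with $\mathcal{F}_{s,p}(\rho^{\lambda_*(\rho)}) = \kappa\,\Lambda(\rho) < 0$, giving at once $\inf_{\mathcal{Y}_M}\mathcal{F}_{s,p} < 0$. The work is to show the infimum is finite. I would use \eqref{ineq-hls-2} to estimate
\[
\mathcal{F}_{s,p}(\rho) \ge \frac{1}{m-1}\|\rho\|_m^m - \frac{\chi}{p'}\,H^*_{m,s}\,M^{p'\vartheta_0}\,\|\rho\|_m^{p'(1-\vartheta_0)},
\]
and check the elementary algebraic identity $p'(1-\vartheta_0) = \frac{N-sp}{N(p-1)}\,\frac{m}{m-1}$, which shows that $p'(1-\vartheta_0) < m$ precisely when $m > m_c$. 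Then the right-hand side, viewed as a function of $t = \|\rho\|_m \ge 0$, is bounded below by a finite constant depending only on $N, p, s, \chi, m, M$, proving $\inf_{\mathcal{Y}_M}\mathcal{F}_{s,p} =: \mu_s > -\infty$ (and strictly negative by the first remark).

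\textbf{Case $m = m_c$ (fair competition).} The crucial identity is $p'/(m_c-1) = p_s^*$, which one verifies directly from $m_c = p'(N-s)/N$; this identifies the coefficient appearing in Proposition \ref{prop:fair-comp} with the critical mass from \eqref{massa-critica}. If $M \le M_c$, the lower bound of Proposition \ref{prop:fair-comp} yields $\mathcal{F}_{s,p}(\rho) \ge 0$ for every $\rho \in \mathcal{Y}_M$; conversely, since at $m = m_c$ one has $\mathcal{F}_{s,p}(\rho^\lambda) = \lambda^{N(m_c-1)}\mathcal{F}_{s,p}(\rho)$ and $N(m_c-1) > 0$, choosing any $\rho \in \mathcal{Y}_M$ and letting $\lambda \to 0^+$ shows the infimum is exactly $0$. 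If $M > M_c$, I would invoke Lemma \ref{lm:extramls-hls} together with Remark \ref{extremalym} to produce a radial extremal $h \in \mathcal{Y}_M$ for the HLS inequality. Equality in \eqref{ineq-hls-2} combined with the identity $M^{p's/N} > p_s^*/(\chi H^*_{m_c,s})$ shows that the bracketed quantity in $\mathcal{F}_{s,p}(h^\lambda) = \lambda^{N(m_c-1)}[\,\|h\|_{m_c}^{m_c}/(m_c-1) - (\chi/p')\|K_{s/2}\ast h\|_{p'}^{p'}\,]$ is strictly negative, so $\mathcal{F}_{s,p}(h^\lambda) \to -\infty$ as $\lambda \to +\infty$.

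The only step requiring genuine work is the lower bound in the diffusion dominated regime; the remaining arguments are essentially reorganizations of Propositions \ref{prop:dom-regime} and \ref{prop:fair-comp} together with the scaling behavior of $\mathcal{F}_{s,p}$ and the existence of HLS extremals of arbitrary mass.
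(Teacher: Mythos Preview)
Your proposal is correct and follows essentially the same structure as the paper's proof: scaling analysis for $m<m_c$, Proposition~\ref{prop:fair-comp} plus scaling and an HLS extremal for $m=m_c$, and the HLS inequality for finiteness when $m>m_c$. The one genuine difference is in the lower bound for $m>m_c$: the paper routes this through Proposition~\ref{prop:dom-regime}, observing that $\inf_{\mathcal Y_M}\mathcal F_{s,p}=\kappa\sup_{\mathcal Y_M}\Lambda$ and then bounding $\sup\Lambda$ via \eqref{ineq-hls-2} (using the identity \eqref{grazie}), whereas you bound $\mathcal F_{s,p}(\rho)$ directly from below by $\tfrac{1}{m-1}t^m-Ct^{p'(1-\vartheta_0)}$ with $t=\|\rho\|_m$ and then minimize in $t$. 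Your route is marginally more elementary since it does not invoke the optimal-dilation machinery for the lower bound, while the paper's route has the advantage of identifying the infimum explicitly as $\kappa\sup_{\mathcal Y_M}\Lambda$, which feeds directly into the later correspondence with HLS extremals in Corollary~\ref{cor:invariant-functional}.
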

	\begin{proof}
		As in the beginning of the proof of Proposition \ref{prop:dom-regime}, we take $\rho \in \mathcal{Y}_M$ and consider the function  $f_{\rho}(\lambda):=\mathcal{F}_{s,p}(\rho^\lambda)$, $\lambda>0,$ whose expression is given by \eqref{defi:functional-lambda} for every $m>1$.\KKK
		\vskip.2cm \noindent
		If $1 < m < m_c$, by sending $\lambda \nearrow \infty$ we infer that, for every $M>0,$
		\begin{equation*}
			\inf_{\rho \in \mathcal{Y}_M} \mathcal{F}_{s,p}(\rho) =  \lim_{\lambda \to \infty} f_\rho(\lambda) = -\infty.
		\end{equation*} 
		\vskip.2cm \noindent
		If $m > m_c$, from Proposition, \ref{prop:dom-regime} and by recalling \eqref{mcintro} and \eqref{esponente-interpol}, we get 
		\begin{equation} \label{grazie}
			\Lambda(\rho)^{\frac{m-m_c}{m-1}} = \frac{\|K_{s/2} \ast \rho\|^{p'}_{p'}}{\|\rho\|_m^{\frac{m}{m-1} \frac{p'}{p^*_s}} } = \frac{\|K_{s/2} \ast \rho\|^{p'}_{p'}}{\|\rho\|_m^{p'\,(1-\vartheta_0)} }\MMM=M^{p'\vartheta_0}\,\left(\frac{\|K_{s/2} \ast \rho\|^{p'}_{p'}}{M^{p'\vartheta_0}\,\|\rho\|_m^{p'\,(1-\vartheta_0)} }\right)\KKK. 	
		\end{equation}
		\MMM Since $m>m_c>(p_s^*)'$, \KKK the Hardy-Littlewood-Sobolev type inequality \eqref{ineq-hls-2} entails therefore that 
		\[
		\sup_{\rho \in \mathcal{Y}_M} \Lambda(\rho) \MMM\in(0,+\infty)\KKK
		\]
		for every $M>0$.
		In turn, by Proposition \ref{prop:dom-regime}   $$\inf_{\rho \in \mathcal{Y}_M}\mathcal F_{s,p}(\rho)=\inf_{\rho \in \mathcal{Y}_M}(\kappa\,\Lambda(\rho))=\kappa\sup_{\rho \in \mathcal{Y}_M} \Lambda(\rho)$$ since $\kappa<0$, \KKK thus
		\begin{equation*}
			\mu_s = \mu(N, p, s, \chi, m ,M) := \inf_{\rho \in \mathcal{Y}_M} \mathcal{F}_{s,p}(\rho) \in (-\infty, 0).
		\end{equation*}
		\vskip.2cm \noindent
		If $m = m_c$, we need to distinguish two cases. 
		\vskip.2cm
		{\it Case $0< M \le M_c$.} We take $\rho \in \mathcal{Y}_M$ and test the energy $\mathcal{F}_{s,p}$ with its mass invariant dilations $\rho^\lambda$ given by \eqref{def:dilation}.
		By the change of variable formula, we have  
		\[
		\|\rho^\lambda\|_{m_c}^{m_c} = \lambda^{N(m_c-1)}\|\rho\|_{m_c}^{m_c}, \qquad \mbox{ for }\,\, \lambda > 0.
		\]
		Since $m_c > 1$, using  Proposition \ref{prop:fair-comp} and \KKK   sending $\lambda \searrow 0$ we get
		\begin{equation} \label{quasi-critico}
			\inf_{\rho \in \mathcal{Y}_M} \mathcal{F}_{s,p}(\rho) = \lim_{\lambda \to 0} \mathcal{F}_{s,p}(\rho^\lambda) = 0 .
		\end{equation}
		\vskip.2cm 
		{\it Case $M > M_c.$} Let $h_s$ be the  unit mass \KKK extremal (with barycenter at the origin) of the HLS-type inequality \eqref{ineq-hls-2}, provided by Lemma \ref{lm:extramls-hls}. We set for every $\lambda>0$
		\[
		\rho_\lambda(x):= M\,\lambda^N\,h_s(\lambda\,x) \in \mathcal{Y}_M. 
		\] 
		By recalling \eqref{mcintro} and \eqref{massa-critica}, we have 
		\[
		\begin{split}
			\mathcal{F}_{s,p}(\rho_\lambda) &= \frac{\|\rho_\lambda\|_{m_c}^{m_c}}{m_c-1} - \frac{\chi}{p'}\,\|K_{s/2} \ast \rho_\lambda\|_{p'}^{p'} = \lambda^{2\,N\,m_c - N}\,\frac{M^{m_c}}{m_c-1} - \frac{\chi}{p'}\,\lambda^{(N-s)\,p' - N}\,M^{p'}\,H^*_{m_c,s} \\
			&= \lambda^{p'\,(N-s)}\,\frac{p^*_s}{p'}\,M^{p'} \left[\frac{1}{M^{\frac{s\,p'}{N}}} - \frac{1}{M_c^{\frac{s\,p'}{N}}}\right] < 0,
		\end{split}
		\]
		from which, by sending $\lambda \to \infty$, we get the desired conclusion. 
	\end{proof}
	\begin{remark} \label{rmk:non-existence}
		By the previous proposition, we infer that there are no minimizers in $\mathcal{Y}_M$ of the energy functional $\mathcal{F}_{s,p}$ in the aggregation dominated regime $m \in (1, m_c)$, whatever the value of the mass $M > 0$. 
		In the fair competition regime, $m = m_c$, still there are no minimizers in $\mathcal Y_M$ for prescribed mass $M \in (M_c, \infty).$ Also for values of the mass $M \in (0, M_c)$, there are no minimizers of $\mathcal{F}_{s,p}$ in $\mathcal{Y}_M$, in light of the leftmost inequality in Proposition \ref{prop:fair-comp}.  
	\end{remark}
	\begin{remark}\label{rmk:identita-potenziale-minimi} 
		By inspecting the proof of Proposition \ref{prop:dom-regime}  we can infer that, for every $m > 1$, any critical point  $\rho$ of $\mathcal{F}_{s,p}$  necessarily satisfies the relevant identity 
		\begin{equation} \label{identita-pot-minimi}
		\frac{\chi}{p^*_s} \|K_{s/2} \ast \rho\|^{p'}_{p'}  = \|\rho\|_{m}^{m}.
	\end{equation}
	Indeed, \eqref{defi:functional-lambda} holds for every $m>1$. Therefore imposing criticality of $\rho$ only with respect to mass invariant dilations, i.e. imposing \eqref{criticality},  yields \eqref{identita-pot-minimi}. Notice that if $1<m<m_c$ then $\rho$ is a maximum, and not a minumum, in the family $\{\rho^\lambda\}_{\lambda>0}$. Notice also that, in the case $m=m_c$, \eqref{identita-pot-minimi} is equivalent to $\mathcal F_{s,p}(\rho)=0$. 
	\end{remark}

		Next, we discuss the relation between extremals of \eqref{ineq-hls-2} and minimizers of $\mathcal{F}_{s,p}$. 
		
	\begin{corollary}[Extremals HLS vs minimizers of $\mathcal{F}_{s,p}$] \label{cor:invariant-functional}
		Let $1 < p < \infty$ and  $0<s\,p < N$. 
		
		Let $m \ge m_c$ and let $M > 0$. If $\rho_s \in \mathcal{Y}_M$ attains the infimum of the energy functional $\mathcal{F}_{s,p}$, among functions in $\mathcal{Y}_M$, then $\rho_s$ is an extremal of the Hardy-Littlewood-Sobolev type inequality \eqref{ineq-hls-2}, that is $\rho_{s}$ attains the supremum in \eqref{extremal-functional-hls}. 
		
		Viceversa, for $m > m_c$, if $M>0$ and $\rho_{s} \in \mathcal{Y}_M$ is an extremal of the Hardy-Littlewood-Sobolev type inequality \eqref{ineq-hls-2}, then its optimal dilation, in the sense of {\rm Proposition \ref{prop:dom-regime}}, attains the infimum of $\mathcal{F}_{s,p}$ over $\mathcal Y_M$. If $m = m_c$  and  $\rho_{s} \in \mathcal{Y}_{M_c}$ is an extremal of the Hardy-Littlewood-Sobolev type inequality \eqref{ineq-hls-2}, \KKK then $\rho_{s}$  is a minimizer of $\mathcal{F}_{s,p} $ over $\mathcal Y_{M_c}$. 
	\end{corollary}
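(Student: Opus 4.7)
The plan is to exploit the formula $\inf_{\lambda>0}\mathcal{F}_{s,p}(\rho^\lambda)=\kappa\,\Lambda(\rho)$ from Proposition \ref{prop:dom-regime} (valid for $m>m_c$), together with the two scaling invariances of the HLS quotient from Remark \ref{rmk:invariant-by-dilations}, to translate ``minimizer of $\mathcal{F}_{s,p}$ in $\mathcal{Y}_M$'' into ``maximizer of the HLS quotient''.

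For $m>m_c$, since $\Lambda$ is invariant under mass-invariant dilations and $\kappa<0$, one has
\[
\inf_{\rho\in\mathcal{Y}_M}\mathcal{F}_{s,p}(\rho)=\kappa\,\sup_{\rho\in\mathcal{Y}_M}\Lambda(\rho),
\]
and by \eqref{grazie}, for $\rho\in\mathcal{Y}_M$,
\[
\Lambda(\rho)^{\frac{m-m_c}{m-1}}=M^{p'\vartheta_0}\,\frac{\|K_{s/2}\ast\rho\|_{p'}^{p'}}{\|\rho\|_1^{p'\vartheta_0}\,\|\rho\|_m^{p'(1-\vartheta_0)}},
\]
so maximizing $\Lambda$ over $\mathcal{Y}_M$ is equivalent to maximizing the HLS quotient over $\mathcal{Y}_M$. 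Since the latter quotient is invariant under both $\rho\mapsto\mu\rho$ and $\rho\mapsto\rho^\lambda$, one can rescale any nonzero competitor in $L^1_+\cap L^m$ to land in $\mathcal{Y}_M$, so $\sup_{\mathcal{Y}_M}(\text{HLS quotient})=H^*_{m,s}$. This yields both implications when $m>m_c$: a minimizer of $\mathcal{F}_{s,p}$ in $\mathcal{Y}_M$ maximizes the HLS quotient on $\mathcal{Y}_M$ and hence is an HLS extremal; conversely, for an HLS extremal $\rho_s$ with $\|\rho_s\|_1=M$ and any $\sigma\in\mathcal{Y}_M$,
\[
\mathcal{F}_{s,p}\bigl(\rho_s^{\lambda_{*}(\rho_s)}\bigr)=\kappa\,\Lambda(\rho_s)\le\kappa\,\Lambda(\sigma)=\inf_{\lambda>0}\mathcal{F}_{s,p}(\sigma^\lambda)\le\mathcal{F}_{s,p}(\sigma),
\]
so $\rho_s^{\lambda_{*}(\rho_s)}\in\mathcal{Y}_M$ is a minimizer.

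For the fair competition regime $m=m_c$, a short computation using \eqref{esponente-interpol} gives $p'\vartheta_0=p's/N$ and $p'(1-\vartheta_0)=m_c$, so the HLS inequality \eqref{ineq-hls-2} specializes to
\[
\|K_{s/2}\ast\rho\|_{p'}^{p'}\le H^*_{m_c,s}\,\|\rho\|_1^{p's/N}\,\|\rho\|_{m_c}^{m_c}.
\]
Moreover, the definition \eqref{massa-critica} of $M_c$ is equivalent to $(\chi/p')\,H^*_{m_c,s}\,M_c^{p's/N}=1/(m_c-1)$, so for $\rho\in\mathcal{Y}_{M_c}$
\[
\mathcal{F}_{s,p}(\rho)\ge\|\rho\|_{m_c}^{m_c}\left(\frac{1}{m_c-1}-\frac{\chi}{p'}\,H^*_{m_c,s}\,M_c^{p's/N}\right)=0,
\]
with equality if and only if equality holds in HLS. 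Combined with $\inf_{\mathcal{Y}_{M_c}}\mathcal{F}_{s,p}=0$ from Proposition \ref{prop:optimal-dilation}, this shows that a minimizer in $\mathcal{Y}_{M_c}$ is an HLS extremal and vice versa.

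The main obstacle is essentially bookkeeping: one must carefully juggle the two independent scale invariances of the HLS quotient against the single mass-invariant dilation invariance of $\Lambda$, and verify the algebraic identity $p'\vartheta_0=p's/N$ at $m=m_c$ together with the reformulation of \eqref{massa-critica} as $(\chi/p')H^*_{m_c,s}M_c^{p's/N}=1/(m_c-1)$. No genuinely new analytic ingredient beyond Proposition \ref{prop:dom-regime}, Proposition \ref{prop:optimal-dilation}, the HLS inequality \eqref{ineq-hls-2}, and the invariance observation of Remark \ref{rmk:invariant-by-dilations} is required.
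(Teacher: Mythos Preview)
Your proposal is correct and follows essentially the same route as the paper: for $m>m_c$ you use Proposition \ref{prop:dom-regime} to reduce minimization of $\mathcal{F}_{s,p}$ over $\mathcal{Y}_M$ to maximization of $\Lambda$, then identify $\Lambda$ with the HLS quotient via \eqref{grazie} and Remark \ref{rmk:invariant-by-dilations}, and conclude both directions with the same chain of inequalities the paper uses; for $m=m_c$ your computation $p'\vartheta_0=p's/N$, $p'(1-\vartheta_0)=m_c$ and the rewriting of \eqref{massa-critica} as $(\chi/p')H^*_{m_c,s}M_c^{p's/N}=1/(m_c-1)$ amount exactly to the paper's passage from \eqref{cor-hls-vs-minimi1} to \eqref{cor-hls-vs-minimi} and back, combined with Proposition \ref{prop:optimal-dilation}. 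The only cosmetic point is that your claim ``rescale any nonzero competitor in $L^1_+\cap L^m$ to land in $\mathcal{Y}_M$'' tacitly needs the barycenter condition, which is most cleanly handled (as the paper does) by invoking the radial extremal of Lemma \ref{lm:extramls-hls} rather than a general translation.
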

	\begin{proof}
		Assume first that $m > m_c$, $M>0$. Let $\rho_s \in \mathcal{Y}_M$ be a minimizer of $\mathcal{F}_{s,p}$ over $\mathcal Y_M$. By Proposition \ref{prop:dom-regime}, we infer that $\rho_s$ maximizes  $\Lambda$ among functions in $\mathcal{Y}_M$,  where $\Lambda$ is the functional defined therein\KKK. In particular, by Remark \ref{rmk:invariant-by-dilations} \MMM and by \eqref{grazie}\KKK, $\rho_{s}$ is an extremal of the Hardy-Littlewood-Sobolev type inequality \eqref{ineq-hls-2}.
		On the other hand, let $\rho_{s} \in \mathcal{Y}_M$ be an extremal of the Hardy-Littlewood-Sobolev inequality \eqref{ineq-hls-2}
		(existence of an extremal in $\mathcal Y_M$ is guaranteed by Remark \ref{extremalym}).
		 Therefore it also satisfies
		\[
		\Lambda(\rho_s)\ge\Lambda(\rho)\qquad\mbox{for every $\rho\in\mathcal Y_M$},
		\]
		in view of \eqref{grazie} and Remark \ref{rmk:invariant-by-dilations}.
		\KKK Its optimal dilation in the sense of Proposition \ref{prop:dom-regime}, which is still an extremal of \eqref{ineq-hls-2} in light of Remark \ref{rmk:invariant-by-dilations}, is given by
		\[
		\widetilde{\rho_s} := \rho_{s}^{\lambda_{*}(\rho_{s})}, 
		\]
		for $\lambda_{*}(\rho_{s})$ as in  \eqref{fattore-dilatazione-rho00}. We claim that  $\widetilde{\rho_s}$	minimizes the energy functional $\mathcal{F}_{s,p}$ among all functions in $\mathcal{Y}_M$. Indeed, for every $\rho \in \mathcal{Y}_M$,  by using the   maximality \KKK of $\rho_{s}$,  the invariance by dilations property of $\Lambda$ and  Proposition \ref{prop:dom-regime}, we have 
		\[
		\mathcal F_{s,p}(\widetilde{\rho_s})=\mathcal F_{s,p}(\rho_s^{\lambda_*(\rho_s)})=\kappa\,\Lambda(\rho_s)\le \kappa\,\Lambda(\rho)\le\mathcal F_{s,p}(\rho)
		\]
		\KKK
		where the first inequality comes by recalling that $\kappa$ is negative. This proves the claim.
		
		 If $m = m_c$, take any minimizer $\rho_{s} \in \mathcal{Y}_{M_c}$ of $\mathcal{F}_{s,p}$ over $\mathcal Y_{M_c}$. By Proposition \ref{prop:optimal-dilation} we have $\mathcal{F}_{s,p}(\rho_{s}) = 0$, that is 
		\begin{equation} \label{cor-hls-vs-minimi1}
			\frac{1}{m_c - 1}\,\|\rho_{s}\|_{m_c}^{m_c} = \frac{\chi}{p'}\,\|K_{s/2} \ast \rho_s\|_{p'}^{p'}. 
		\end{equation}
		By recalling \eqref{massa-critica} and \eqref{extremal-functional-hls}, this entails that 
		\begin{equation} \label{cor-hls-vs-minimi}
			\frac{\|K_{s/2} \ast \rho_{s}\|_{p'}^{p'}}{M_c^{\frac{p'\,s}{N}}\,\|\rho_{s}\|_{m_c}^{m_c}} = H^*_{m_c, s},
		\end{equation}
		as desired. On the other hand, let $\rho_{s} \in \mathcal{Y}_{M_c}$ be an extremal of \eqref{ineq-hls-2}. Then, always by recalling \eqref{massa-critica}, \eqref{cor-hls-vs-minimi} implies \eqref{cor-hls-vs-minimi1} that is $\mathcal{F}_{s,p}(\rho_{s}) = 0$,  proving that $\rho_s$ minimizes $\mathcal F_{s,p}$ over $\mathcal Y_{M_c}$ in view of Proposition \ref{prop:optimal-dilation}. \KKK 
	\end{proof}

	The equation satisfied by the global minimizers of $\mathcal{F}_{s,p}$, provided by  Corollary \ref{cor:invariant-functional}\KKK, reads as follows. 
	
	\begin{lemma} \label{lm:euler-lagrange}
		Let $1 < p < \infty$ and  $0<s\,p < N$, and let $\chi, M > 0$. For $m > m_c$,  if $\rho_s \in \mathcal{Y}_M$ is a minimizer of $\mathcal{F}_{s,p}$ over $\mathcal{Y}_M$ then it solves 
		\begin{equation} \label{euler-lagrange-eqnfinale}
			\frac{m}{m-1}\rho_s^{m-1} = \Big(\chi\,\mathcal{K}_{s,p}(\rho_{s}) - \mathcal{D}_s\Big)_{+} \qquad \mbox{ in } \mathbb{R}^N,
		\end{equation}
		 with \begin{equation}\label{ds}\mathcal{D}_s: = \left(\frac{p^*_s - m'}{M} \right) \|\rho_{s}\|_m^m = \left(\frac{p^*_s - m'}{M} \right)  \frac{\chi}{p^*_s}\|K_{s/2} \ast \rho_{s}\|_{p'}^{p'}.\end{equation} \KKK
		In particular, it is bounded with compact support, radially nonincreasing, $W^{1,1}(\mathbb R^N)$ and  satisfies the H\"older regularity properties of {\rm Lemma \ref{lm:holder}}. The same holds for $m=m_c$, by taking $M = M_c$.\KKK 
	\end{lemma}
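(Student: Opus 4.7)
The plan is to obtain \eqref{euler-lagrange-eqnfinale} as a direct consequence of the Euler--Lagrange equation for extremals of the HLS type inequality \eqref{ineq-hls-2}, together with the identity of Remark \ref{rmk:identita-potenziale-minimi}, and then to read off the regularity and structural properties from the lemmas of the previous section.

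First, I would invoke Corollary \ref{cor:invariant-functional}: any minimizer $\rho_s$ of $\mathcal{F}_{s,p}$ over $\mathcal{Y}_M$ (for $m>m_c$, any $M>0$; for $m=m_c$, with $M=M_c$) is an extremal of \eqref{ineq-hls-2}. Therefore Lemma \ref{lm:el-estremali-hls} applies with $h_0=\rho_s$, giving
\[
\mathcal{A}_s\,\rho_s^{m-1}=\bigl(\mathcal{K}_{s,p}(\rho_s)-\mathcal{C}_s\bigr)_+\qquad\text{in }\mathbb{R}^N,
\]
with the explicit values of $\mathcal{A}_s$ and $\mathcal{C}_s$ from \eqref{costanti-eulero-lagrange}. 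The remaining task is purely algebraic: to rewrite these constants in the form appearing in \eqref{euler-lagrange-eqnfinale}.

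Here I would use Remark \ref{rmk:identita-potenziale-minimi}, which guarantees that since $\rho_s$ is critical among the one-parameter family $\{\rho_s^\lambda\}_{\lambda>0}\subset\mathcal{Y}_M$, we have $\chi\,\|K_{s/2}\ast\rho_s\|_{p'}^{p'}=p^*_s\,\|\rho_s\|_m^m$. Substituting this into \eqref{costanti-eulero-lagrange} gives $\mathcal{A}_s=m'/\chi=m/((m-1)\chi)$. Multiplying both sides of the HLS Euler--Lagrange equation by $\chi$ and pulling $\chi$ inside $(\,\cdot\,)_+$ then yields
\[
\frac{m}{m-1}\rho_s^{m-1}=\bigl(\chi\,\mathcal{K}_{s,p}(\rho_s)-\chi\,\mathcal{C}_s\bigr)_+,
\]
and the same identity shows $\chi\,\mathcal{C}_s$ coincides with both expressions for $\mathcal{D}_s$ in \eqref{ds}. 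In the case $m=m_c$, $M=M_c$, no rescaling is needed since by Corollary \ref{cor:invariant-functional} the minimizer is itself an extremal; for $m>m_c$, an arbitrary minimizer is directly an extremal (not merely a rescaling of one), so the same computation applies verbatim.

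Finally, the additional structural properties follow by collecting what is already proven for extremals of \eqref{ineq-hls-2}: radial monotonicity (up to translation) from Lemma \ref{lm:extramls-hls}, $L^\infty$-boundedness and compactness of the support from Lemma \ref{lm:bootstrap-hls}, Hölder regularity from Lemma \ref{lm:holder}, and $W^{1,1}$-regularity from Corollary \ref{W11}. I do not foresee a genuine obstacle here; the only point requiring some care is the consistent use of Remark \ref{rmk:identita-potenziale-minimi} to identify the Lagrange-type constant $\mathcal{D}_s$, which relies on recognizing that a minimizer is in particular critical under the mass-invariant dilation family \eqref{def:dilation}.
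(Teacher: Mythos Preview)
Your proposal is correct and follows essentially the same route as the paper: invoke Corollary \ref{cor:invariant-functional} to see that $\rho_s$ is an HLS extremal, apply Lemma \ref{lm:el-estremali-hls}, use the dilation-criticality identity \eqref{identita-pot-minimi} from Remark \ref{rmk:identita-potenziale-minimi} to simplify the constants \eqref{costanti-eulero-lagrange} into \eqref{ds}, and then read off the regularity from Lemmas \ref{lm:extramls-hls}, \ref{lm:bootstrap-hls}, \ref{lm:holder} and Corollary \ref{W11}. The only cosmetic difference is that the paper justifies \eqref{identita-pot-minimi} by splitting into the cases $m>m_c$ (where $\lambda_*(\rho_s)=1$ via Proposition \ref{prop:dom-regime}) and $m=m_c$ (where $\mathcal{F}_{s,p}(\rho_s)=0$), whereas you cite the remark directly; both are equivalent.
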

	\begin{proof}
		Let $\rho_{s} \in \mathcal{Y}_M$ be as in the statement. By Corollary \ref{cor:invariant-functional}, we have that $\rho_{s}$ is an extremal of the Hardy-Littlewood-Sobolev type inequality \eqref{ineq-hls-2}.  In light of Lemma \ref{lm:el-estremali-hls}, it satisfies \eqref{eqn_el}.		If $m>m_c$, Proposition \ref{prop:dom-regime} implies that $\rho_s$ coincides with its optimal dilation, i.e., ${\lambda_*(\rho_s)}=1$, where $\lambda_*$ is given by \eqref{fattore-dilatazione-rho00}, so that \eqref{identita-pot-minimi} holds. If $m=m_c$, then Remark \ref{rmk:non-existence} and Lemma \ref{prop:optimal-dilation} imply $M=M_c$ and $\mathcal F_{s,p}(\rho_s)=0$ which in turn directly yields \eqref{identita-pot-minimi}. 
	By inserting \eqref{identita-pot-minimi} in \eqref{costanti-eulero-lagrange}, we see that \eqref{eqn_el} becomes \eqref{euler-lagrange-eqnfinale}-\eqref{ds}. \KKK
	By Corollary \ref{W11}, Lemma \ref{lm:bootstrap-hls} and Lemma \ref{lm:holder} we then infer the desired conclusions.  	
	\end{proof}
	\begin{remark} \label{rmk:costante-el}
		For future purposes we record the following identities,  valid for $m>m_c$, involving the constant $\mathcal D_{s}$ appearing in the Euler-Lagrange equation \eqref{euler-lagrange-eqnfinale} satisfied by a minimizer of $\mathcal F_{s,p}$ over $\mathcal Y_M$:
		\begin{equation*} 
			\mathcal{D}_s = \left(\frac{p^*_s - m'}{M} \right) \|\rho_{s}\|_m^m = \left(\frac{p^*_s - m'}{M} \right)  \frac{\chi}{p^*_s}\|K_{s/2} \ast \rho_{s}\|_{p'}^{p'} = \left(\frac{p^*_s - m'}{M} \right) \frac{(m_c-1)(m-1)}{m_c - m} \mathcal{F}_{s,p}(\rho_{s}),
		\end{equation*}
		which follows from \eqref{ds} and the definition of $\mathcal F_{s,p}$.\KKK
	\end{remark}
	\begin{remark}\label{lastremark} 
	\MMM
		The conclusion of Lemma \ref{lm:euler-lagrange} holds also   for continuous radially decreasing critical points of $\mathcal F_{s,p} $ over $\mathcal Y_M$ in the case $(p^*_s)' < m < m_c$. These are defined as continuous radially decreasing solutions to \eqref{euler-lagrange-eqnfinale}, with $\mathcal D_s$ still expressed by \eqref{ds}.
	 Indeed, a first variation argument along the line of Lemma \ref{lm:el-estremali-hls} proves that the Euler-Lagrange equation that is necessarily satisfied by a continuous radially decreasing critical point $\rho$ of $\mathcal F_{s,p}$, constrained to $\mathcal Y_M$, is of the form
\eqref{ELintro2},
	  for some suitable constant $\mathcal Q$ having the role of Lagrange multiplier for the mass constraint. Moreover $\mathcal Q$ necessarily coincides with $\mathcal D_s$ from \eqref{ds} in view of the criticality condition \eqref{identita-pot-minimi}.
		Existence of such critical points, for every mass $M>0$, is deduced from the existence of radially decreasing extremals of the HLS inequality \eqref{ineq-hls-2} having mass $M$ and satisfying \eqref{identita-pot-minimi}, which is guaranteed by Lemma \ref{lm:extramls-hls} and Remark \ref{rmk:invariant-by-dilations}: notice indeed that an extremal of mass $M$ satisfies \eqref{identita-pot-minimi} after taking a mass invariant dilation (thus preserving extremality), see Remark \ref{rmk:identita-potenziale-minimi}. A HLS extremal having mass $M$ and satisfying \eqref{identita-pot-minimi} does  satisfy  \eqref{euler-lagrange-eqnfinale}-\eqref{ds}, thanks to Lemma \ref{lm:el-estremali-hls}, as seen  by plugging \eqref{identita-pot-minimi} in \eqref{eqn_el}-\eqref{costanti-eulero-lagrange}. The further regularity of such critical points is then deduced in the same way starting from the Euler-Lagrange equation, see Lemma \ref{lm:bootstrap-hls}, Lemma \ref{lm:holder} and Corollary \ref{W11}.\KKK
	\end{remark}

	\begin{proof}[\bf Proof of Theorem \ref{main1}] 
		The first part of Theorem \ref{main1} follows by Lemma \ref{lm:extramls-hls}, Lemma \ref{lm:el-estremali-hls}, Lemma \ref{lm:bootstrap-hls} and Lemma \ref{lm:holder}. The second part  follows by Corollary \ref{cor:invariant-functional} and Lemma \ref{lm:euler-lagrange}. \KKK
	\end{proof}
	\section{The limit $s\to0$} \label{sec:asymptotics}		
	This section is devoted to study the asymptotic behavior of the minimizers $\rho_s$, provided by  Corollary \ref{cor:invariant-functional}, \KKK as $s$ tends to $0$. Since the critical exponent $m_c$ given by \eqref{mcintro} tends to $p'$, as $s$ goes to zero, we need discuss separately three cases according to whether $m < p'$, $m = p'$ or $m > p'$. 
	
	If $m < p'$, we have  $m < (p_s^*)'$ \KKK for $s$ small enough. By Proposition \ref{prop:optimal-dilation}, we get $\inf_{\rho \in \mathcal{Y}_M} \mathcal{F}_{s,p} = -\infty$ thus there are no minimizers of $\mathcal{F}_{s,p}$,  and not even stationary states obtained from extremals of the HLS inequality according to Remark \ref{lastremark}. The case $m = p'$ (that we call the limiting fair competition regime) will be discussed in Section \ref{subsection:section-m=p'}.
	The limiting diffusion dominated  regime $m > p'$   is the most interesting one and it will be treated in the first part of this section. The main property is contained in Theorem \ref{thm:convergenza-forte-minimizzanti} below: we will prove that if $ m > p'$, any family of minimizers $(\rho_{s})_{s \in (0,N/p)}$ of the free energy functional $\mathcal{F}_{s,p}$ provided by  Corollary \ref{cor:invariant-functional} \KKK strongly converges as $s \searrow 0$ to the unique minimizer  in $\mathcal{Y}_M$ of a limit functional $\mathcal{F}_0$. In addition, $\mathcal{F}_{s,p}$ $\Gamma-$converges to $\mathcal{F}_0$ on $\mathcal{Y}_M$, with respect to the strong convergence in $L^{p'}(\mathbb{R}^N)$, see Proposition \ref{prop:gamma-convergenza}.  
	
	\subsection{The limit functional }
Concerning the limit functional $\mathcal F_0$ defined by \eqref{limit-functional}
	 we have the following
	\begin{proposition} \label{prop:gamma-limite}
		Let $1 < p < \infty$ and $m > p'$. There exists a unique radially symmetric and nonincreasing minimizer of $\mathcal{F}_0$ in $\mathcal{Y}_M$, given by
		\begin{equation} \label{minimo-gamma-limite}
			\rho_0(x) = \left(\frac{\chi}{p}\right)^{\frac{1}{m-p'}} 1_{B_{R_0}}(x), \qquad \mbox{ where }\,\, R_0 = \left(\frac{M}{\omega_N}\left(\frac{p}{\chi}\right)^{\frac{1}{m-p'}} \right)^{\frac{1}{N}}. 
		\end{equation}
	\end{proposition}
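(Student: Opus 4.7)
The plan is to reduce the question to a pointwise analysis of the integrand
$g(t) := \frac{t^m}{m-1} - \frac{\chi}{p'}\,t^{p'}$, since $\mathcal F_0$ is a purely local functional (no convolution or gradient term appears). The candidate $\rho_0$ takes only two values, with height $c^*=(\chi/p)^{1/(m-p')}$, so one expects the minimisation to be governed by the convex envelope of $g|_{[0,+\infty)}$.

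First I would identify the unique $c^*>0$ such that the tangent to $g$ at $c^*$ passes through the origin. Solving $g(c^*)=c^*\,g'(c^*)$ and using $(p'-1)/p'=1/p$ gives $(c^*)^{m-p'}=\chi/p$, matching the claimed amplitude. Set $k:=g'(c^*)=g(c^*)/c^*$, which is strictly negative since $g$ attains a negative minimum, and define $\tilde g(t):=g(t)-kt$. By construction $\tilde g(0)=\tilde g(c^*)=\tilde g'(c^*)=0$. The central analytical step is the claim that $\tilde g(t)\ge 0$ for all $t\ge 0$, with equality only at $0$ and $c^*$. For this I would analyse the sign of $\tilde g''=g''=t^{p'-2}\bigl[m\,t^{m-p'}-\chi(p'-1)\bigr]$: it has exactly one zero $t^{**}>0$, is negative before and positive after, so $\tilde g$ is concave on $(0,t^{**})$ and convex on $(t^{**},+\infty)$. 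Combined with $\tilde g'(0^+)=-k>0$, the double zero at $c^*$, and $\tilde g\to+\infty$ at infinity, this concave-then-convex profile forces $\tilde g>0$ away from $\{0,c^*\}$. A quick sanity check is $\tilde g''(c^*)>0$, which reduces exactly to $m(p-1)>p$, i.e.\ to the assumption $m>p'$.

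With the pointwise bound $g(\rho)\ge k\,\rho$ in hand, for any $\rho\in\mathcal Y_M$ the mass constraint yields $\mathcal F_0(\rho)\ge kM$, with equality iff $\rho(x)\in\{0,c^*\}$ for a.e.\ $x\in\mathbb R^N$. Any such $\rho$ has the form $c^*\,1_E$ with $|E|=M/c^*$; for a radially nonincreasing representative the only admissible set is the centered ball $B_{R_0}$ with $R_0=(M/(c^*\omega_N))^{1/N}$, and one checks directly that $\rho_0=c^*\,1_{B_{R_0}}\in\mathcal Y_M$ and $\mathcal F_0(\rho_0)=kM$. This simultaneously yields existence of a minimiser and uniqueness in the radially nonincreasing class.

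The main obstacle I anticipate is the geometric claim that the origin-tangent at $c^*$ stays below $g$ on all of $[0,+\infty)$---equivalently, that the convex envelope of $g|_{[0,+\infty)}$ is linear on $[0,c^*]$ and coincides with $g$ on $[c^*,+\infty)$. The concavity--convexity analysis of $g''$, together with the double zero of $\tilde g$ at $c^*$ and the positivity $\tilde g''(c^*)>0$ (which is precisely the hypothesis $m>p'$), is what makes this go through cleanly.
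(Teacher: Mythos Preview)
Your proof is correct, but the route differs from the paper's. The paper argues via the mass-invariant dilations $\rho^\lambda$: it first optimises $\mathcal F_0(\rho^\lambda)$ over $\lambda$ to reduce the problem to maximising the scale-invariant quotient $\Lambda(\rho)=\|\rho\|_{p'}^{p'(m-1)/(m-p')}/\|\rho\|_m^{m(p'-1)/(m-p')}$, then bounds $\Lambda(\rho)\le M$ by the H\"older inequality $\|\rho\|_{p'}^{p'}\le M^{(m-p')/(m-1)}\|\rho\|_m^{m(p'-1)/(m-1)}$, whose equality case forces $\rho$ to be a multiple of a characteristic function; the specific height $(\chi/p)^{1/(m-p')}$ then comes from imposing $\lambda_*(\rho_0)=1$. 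Your argument instead bypasses both the dilation step and H\"older by working pointwise: you identify the tangent line to $g(t)=t^m/(m-1)-(\chi/p')t^{p'}$ through the origin, show via the concave--convex structure of $g$ that it lies below $g$ with contact only at $0$ and $c^*$, and integrate against the mass constraint to get the sharp lower bound $\mathcal F_0(\rho)\ge kM$ together with the equality case in one stroke. Your approach is more self-contained and computes the minimum value immediately; the paper's approach has the virtue of mirroring exactly the analysis of $\mathcal F_{s,p}$ for $s>0$ (same functional $\Lambda$, same optimal-dilation device), which makes the $s\to 0$ limit in the subsequent sections more transparent.
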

	\begin{proof}
		Let $\rho \in \mathcal{Y}_M$ and let $\rho_\lambda$ be its mass invariant dilation given by \eqref{def:dilation}. We have 
		\[
		\mathcal{F}_0(\rho^\lambda) = \frac{\lambda^{N(m-1)}}{m-1}  \|\rho\|^m_m -  \frac{\chi}{p'} \lambda^{N(p'-1)} \|\rho\|_{p'}^{p'},
		\]
		and 
		\[
		\begin{split}
			\frac{\rm d}{\rm d\lambda} \mathcal{F}_0(\rho^\lambda) = N\lambda^{N(m-1) -1}  \|\rho\|^m_m - \frac{\chi}{p} N \lambda^{N(p'-1)-1} \|\rho\|_{p'}^{p'} 
			= N\,\lambda^{N(p'-1)-1}\left[\lambda^{N(m-p')} \|\rho\|^m_m - \frac{\chi}{p} \|\rho\|_{p'}^{p'}\right],
		\end{split}
		\]
		for $\lambda > 0$.
		By optimizing in $\lambda$, we find that 
		\begin{equation} \label{lambda-minimo}
			\lambda_*(\rho) := \left(\frac{\chi}{p}\,\frac{\|\rho\|_{p'}^{p'}}{\|\rho\|_m^m}\right)^{\frac{1}{N(m-p')}},
		\end{equation}
		is the unique global minimum of $\lambda\mapsto \mathcal{F}_0(\rho^\lambda)$, for $\lambda > 0$. We then have 
		\[
		\mathcal{F}_0(\rho^{\lambda_{*}(\rho)}) = \kappa\,\Lambda(\rho),
		\] 
		where 
		\[
		\kappa = -\left(\frac{\chi}{p}\right)^{\frac{m-1}{m-p'}} \qquad \mbox{ and } \qquad \Lambda(\rho) = \frac{\|\rho\|_{p'}^{p'\,\frac{m-1}{m-p'}}}{\|\rho\|_m^{m\,\frac{p'-1}{m-p'}}}. 
		\]
		In order to minimize the functional $\mathcal{F}_0$ on $\mathcal{Y}_M$ we can equivalently maximize $\Lambda$.	Moreover, by symmetrization, we can look for maximizer of $\Lambda$ in the restricted class of $$\widetilde{\mathcal{Y}}_M :=  \left\{\rho \in \mathcal{Y}_M\,:\, \rho \mbox{ is radially symmetric and nonincreasing} \right\}.$$
		By using H\"older's inequality, we get that for $ \rho \in \widetilde{\mathcal{Y}}_M$ 
		\begin{equation*} \label{holder-minimi}
			\|\rho\|_{p'}^{p'} \le M^{\frac{m-p'}{m-1}}\,\|\rho\|_m^{m\,\frac{p'-1}{m-1}} \quad \Longleftrightarrow \quad \Lambda(\rho) \le M,
		\end{equation*}
		and the equality is satisfied by a function $\rho_{0}$ if and only if $\rho_0(x) = c\,1_F(x),$ for some measurable set $F \subseteq \mathbb{R}^N$, see \cite[Theorem 2.3 (ii.b)]{LL} for example. Since $\rho_0 \in \widetilde{\mathcal{Y}}_M$ and by using that $\|\rho_0\|_1 = M$, we infer that $F = B_R$, for some $R > 0$, and $c = M/|B_R|$. Moreover, since $\rho_{0}$ minimizes $\mathcal{F}_0$ it must coincide with its optimal dilation given by \eqref{lambda-minimo}, i.e. $\lambda_*(\rho_0) = 1$. This entails that 
		\[
		M = \Lambda(\rho_{0}) = \left(\frac{p}{\chi}\right)^{\frac{m-1}{m-p'}}\,\frac{M^m}{|B_R|^{m-1}},   
		\]
		from which we infer \eqref{minimo-gamma-limite}. 
	\end{proof}
	In the next result we infer information on the limiting behavior of the minimum value of $\mathcal{F}_{s,p}$, by testing the energy $\mathcal{F}_{s,p}$ with $\rho_{0}$.  
	\begin{corollary} \label{staccati-da-zero}
		Let $1 < p < \infty$ and   $0<s\,p < N$. Let $m > p'$ and let $\chi, M > 0$. If $\rho_s \in \mathcal{Y}_M$ is a minimizer of $\mathcal{F}_{s,p}$ over $\mathcal{Y}_M$ \MMM for every $s\in(0,N/p)$\KKK, then we have 
		\[
		\limsup_{s \to 0} \mathcal{F}_{s,p}(\rho_{s}) < 0\quad\mbox{and}
\quad \limsup_{s \to 0} \mathcal{D}_s > 0		\]
		 where $\mathcal{D}_s > 0$ is the constant related to $\rho_s$ appearing in {\rm Lemma \ref{lm:euler-lagrange}}. \KKK
	\end{corollary}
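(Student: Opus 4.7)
The strategy is to exhibit a good competitor for $\rho_s$ as $s\to 0$, namely the explicit minimizer $\rho_0$ of the limit functional $\mathcal{F}_0$ from Proposition \ref{prop:gamma-limite}.

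\textbf{Step 1 (upper bound via the limit minimizer).} By minimality of $\rho_s$ in $\mathcal{Y}_M$ and by $\rho_0\in\mathcal{Y}_M$, we have
\[
\mathcal{F}_{s,p}(\rho_s)\;\le\;\mathcal{F}_{s,p}(\rho_0)\;=\;\frac{1}{m-1}\int_{\mathbb{R}^N}\rho_0^m-\frac{\chi}{p'}\int_{\mathbb{R}^N}(K_{s/2}\ast\rho_0)^{p'}.
\]
Only the second term depends on $s$, so it is enough to pass to the limit there.

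\textbf{Step 2 (pointwise limit of the potential term).} By \eqref{minimo-gamma-limite}, $\rho_0$ is a constant multiple of the indicator of a bounded ball, so $\rho_0\in L^q(\mathbb{R}^N)$ for every $1\le q\le\infty$. Fixing any $1<q<p'$, Kurokawa's theorem (Theorem \ref{thm:kurokawa}) yields $\|K_{s/2}\ast\rho_0-\rho_0\|_{p'}\to 0$ as $s\to 0$, and therefore $\|K_{s/2}\ast\rho_0\|_{p'}^{p'}\to\|\rho_0\|_{p'}^{p'}$. Consequently
\[
\lim_{s\to 0}\mathcal{F}_{s,p}(\rho_0)\;=\;\mathcal{F}_0(\rho_0).
\]
From the proof of Proposition \ref{prop:gamma-limite} we have $\mathcal{F}_0(\rho_0)=\kappa\,\Lambda(\rho_0)$ with $\kappa<0$ and $\Lambda(\rho_0)>0$, so $\mathcal{F}_0(\rho_0)<0$. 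Combining with Step 1,
\[
\limsup_{s\to 0}\mathcal{F}_{s,p}(\rho_s)\;\le\;\mathcal{F}_0(\rho_0)\;<\;0,
\]
which is the first claim.

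\textbf{Step 3 (from the energy to the Lagrange multiplier).} I use the identity recorded in Remark \ref{rmk:costante-el}, valid since $m>p'\ge m_c$ for small $s$:
\[
\mathcal{D}_s\;=\;\frac{p^*_s-m'}{M}\,\frac{(m_c-1)(m-1)}{m_c-m}\,\mathcal{F}_{s,p}(\rho_s).
\]
As $s\to 0$ one has $p^*_s\to p$ and $m_c\to p'$, hence the first factor converges to $(p-m')/M>0$ (because $m>p'$ gives $m'<p$) and the second factor converges to $(p'-1)(m-1)/(p'-m)<0$. Since Step 2 provides $\varepsilon>0$ and $s_0>0$ with $\mathcal{F}_{s,p}(\rho_s)\le-\varepsilon$ for $0<s<s_0$, multiplying three factors whose signs are $+$, $-$, $-$ in the limit gives a strictly positive lower bound; thus $\limsup_{s\to 0}\mathcal{D}_s>0$, in fact even $\liminf_{s\to 0}\mathcal{D}_s>0$.

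The only non-bookkeeping point is the applicability of Kurokawa's theorem in Step 2, which is immediate because the explicit form \eqref{minimo-gamma-limite} of $\rho_0$ places it in every Lebesgue space; no further estimates are needed.
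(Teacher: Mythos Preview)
Your proof is correct and follows essentially the same approach as the paper: test $\mathcal{F}_{s,p}$ against the explicit limit minimizer $\rho_0$, use Kurokawa's theorem to pass to the limit in the potential term, and then invoke Remark \ref{rmk:costante-el} to transfer the conclusion to $\mathcal{D}_s$. Your Step~3 is in fact more carefully argued than the paper's (which just cites the remark), and you correctly observe that one even gets $\liminf_{s\to 0}\mathcal{D}_s>0$; note also that $m_c=p'(1-s/N)<p'<m$ holds for \emph{all} $s\in(0,N/p)$, not just small $s$, so Remark \ref{rmk:costante-el} applies throughout.
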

	\begin{proof}
		Let $\rho_{0}$ be as in Proposition \ref{prop:gamma-limite}. By using Theorem \ref{thm:kurokawa}, we have 
		\begin{equation*} \label{claim-asym}
			\lim_{s \to 0} \|K_{s/2} \ast \rho_{0} - \rho_{0}\|_{p'} = 0. 
		\end{equation*}
	 By the minimality of $\rho_{s}$ and by using the explicit expression of $\rho_{0}$, this entails that   
		\[
		\begin{split}
			\limsup_{s \to 0} \mathcal{F}_{s,p}(\rho_{s}) \le \lim_{s \to 0} \mathcal{F}_{s,p}(\rho_{0})
			= \frac{M}{m-1} \left(\frac{\chi}{p}\right)^{\frac{m-1}{m-p'}} - M \frac{\chi}{p'} \left(\frac{\chi}{p}\right)^{\frac{p'-1}{m-p'}} < 0,
		\end{split}
		\]
		where the last inequality follows since $m > p'$. By recalling Remark \ref{rmk:costante-el}, we also get the announced asymptotic behavior of $\mathcal{D}_s$. 
	\end{proof}
	
	\MMM
	
	\begin{remark}\label{limitfair} Concerning the limit functional $\mathcal F_0$ in the case $m=p'$, for any $M>0$
	it is clear that $\inf_{\mathcal Y_M}\mathcal F_0=0$ if $0<\chi\le p$ and that  $\inf_{\mathcal Y_M}\mathcal F_0=-\infty$ if $\chi> p$. These properties are obtained by taking dilations $\rho^\lambda$ for any given $\rho\in\mathcal Y_M$ and by sending $\lambda$ to $+\infty$ and to $0$, respectively. The infimum is not realized, except for the trivial case $p=\chi$. 
	\end{remark}
	
	\KKK

	\subsection{The limiting diffusion dominated regime}
	Next, we discuss the limiting behavior of the minimizers for $s \searrow 0$, in the case $m>p'$.
	\begin{proposition}[Equiboundedness of $\rho_{s}$] \label{equibounded}
		Let $1 < p < \infty$ and  $0<s\,p < N$. Let $m > p'$ and let $\chi, M > 0$. Let $\rho_s \in \mathcal{Y}_M$ be a minimizer of $\mathcal{F}_{s,p}$ over $\mathcal{Y}_M$ \MMM for every $s\in(0,N/p)$\KKK. \MMM Then there exists $s_0\in(0,N/p)$ such that \KKK
		\[
		\sup_{s \in (0, s_0)}\|\rho_{s}\|_{\infty} < \infty \qquad \mbox{ and } \qquad \sup_{s \in (0, s_0)} \|\rho_{s}\|_m < \infty.
		\]
	\end{proposition}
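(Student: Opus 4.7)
The plan is to first establish the uniform $L^m$ bound and then use it, via the Euler-Lagrange equation \eqref{euler-lagrange-eqnfinale}, to derive the uniform $L^\infty$ bound. The key structural fact is that, as $s\to 0$, the exponents $(p_s^*)'$ and $m_c$ both tend to $p'$, so the strict condition $m>p'$ provides a quantitative gap that drives both steps.

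For the $L^m$ bound I would combine the criticality identity \eqref{identita-pot-minimi} (valid for any critical point, cf.\ Remark \ref{rmk:identita-potenziale-minimi}) with the HLS inequality from Corollary \ref{cor:hls} applied to $\rho_s$, to get
\[
\|\rho_s\|_m^m=\frac{\chi}{p_s^*}\,\|K_{s/2}\ast\rho_s\|_{p'}^{p'}\le\frac{\chi\,H_s^{p'}}{p_s^*}\,M^{p'\vartheta_0}\,\|\rho_s\|_m^{p'(1-\vartheta_0)}.
\]
Since $\vartheta_0=1-m'/p_s^*$, the exponent $m-p'(1-\vartheta_0)$ on the left tends to $m\,[(m-1)(p-1)-1]/[(m-1)(p-1)]$ as $s\to 0$, which is strictly positive exactly because $m>p'\Longleftrightarrow(m-1)(p-1)>1$. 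Using also $\limsup_{s\to 0}H_s\le 1$, this yields $\|\rho_s\|_m\le C$ uniformly for $s\in(0,s_0)$, provided $s_0$ is small enough.

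For the $L^\infty$ bound I would exploit the pointwise inequality $\rho_s^{m-1}\le\tfrac{m-1}{m}\chi\,\mathcal{K}_{s,p}(\rho_s)$ coming from \eqref{euler-lagrange-eqnfinale} and the positivity of $\mathcal{D}_s$, and bound $\|\mathcal{K}_{s,p}(\rho_s)\|_\infty$ by two applications of Lemma \ref{lm:bound-l-infinito-riesz}: first with $q=1$, $r=\infty$ on $\rho_s$, giving $\|K_{s/2}\ast\rho_s\|_\infty\le\alpha_s M+\beta_s\|\rho_s\|_\infty$; second with $q=p$, $r=\infty$ on $(K_{s/2}\ast\rho_s)^{p'-1}$, using $p(p'-1)=p'$ so that the $L^p$ norm of this object equals $\|K_{s/2}\ast\rho_s\|_{p'}^{p'-1}$, already uniformly controlled via the first step and Corollary \ref{cor:hls}. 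Combining these with the elementary estimate $(a+b)^{p'-1}\le C_{p'}(a^{p'-1}+b^{p'-1})$ and the asymptotics $\alpha_s\to 0$, $\beta_s\to 1$ from \eqref{asym-bound-l-infinito-riesz1}--\eqref{asym-bound-l-infinito-riesz2}, one obtains an estimate of the form $\|\mathcal{K}_{s,p}(\rho_s)\|_\infty\le A_s+B_s\,\|\rho_s\|_\infty^{p'-1}$ with $A_s,B_s$ uniformly bounded as $s\to 0$. Plugging this into the pointwise Euler-Lagrange inequality yields $\|\rho_s\|_\infty^{m-1}\le C_1+C_2\|\rho_s\|_\infty^{p'-1}$ uniformly in $s$; since $m-1>p'-1$, dividing by $\|\rho_s\|_\infty^{p'-1}$ in the regime $\|\rho_s\|_\infty\ge 1$ forces $\|\rho_s\|_\infty^{m-p'}\le C$, completing the argument.

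The main obstacle is that $B_s$ does not vanish (only $\alpha_s$ does, while $\beta_s\to 1$), so the "bad" term $B_s\|\rho_s\|_\infty^{p'-1}$ cannot be absorbed on the left by smallness. What closes the argument is purely structural: the strict inequality $m>p'$ makes the left-hand side grow strictly faster than the right-hand side in $\|\rho_s\|_\infty$, producing an implicit a priori upper bound irrespective of the precise sizes of $B_s$ and $C_{p'}$.
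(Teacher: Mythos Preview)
Your proof is correct, but it follows a genuinely different route from the paper's. The paper argues in the opposite order: it first obtains the uniform $L^\infty$ bound and then deduces the $L^m$ bound by interpolation $\|\rho_s\|_m\le\|\rho_s\|_\infty^{1/m'}M^{1/m}$. For the $L^\infty$ bound, the paper exploits the radial monotonicity of $\rho_s$ (hence $\|\rho_s\|_\infty=\rho_s(0)$ and the pointwise decay $\rho_s(x)\le M/(\omega_N|x|^N)$), evaluates the Euler--Lagrange inequality at the origin, and splits the outer convolution over $B_1$ and $B_1^c$; the $B_1^c$ piece is controlled by H\"older and HLS together with the pointwise decay, while the $B_1$ piece uses Lemma~\ref{lm:bound-l-infinito-riesz} with $q=1$, $r=\infty$ exactly as you do. A contradiction argument (divide by $\rho_s(0)^{p'-1}$) then closes via $m>p'$.

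Your approach has the advantage of not invoking radiality at all: the $L^m$ bound comes purely from the criticality identity \eqref{identita-pot-minimi} combined with HLS, and the $L^\infty$ bound from two clean applications of Lemma~\ref{lm:bound-l-infinito-riesz} (the second one with $q=p$, $r=\infty$, using the neat identity $p(p'-1)=p'$). This would apply to any critical point satisfying \eqref{euler-lagrange-eqnfinale}, not just radially decreasing minimizers. The paper's approach, in contrast, leverages the available radial structure to bypass the preliminary $L^m$ estimate entirely; once $L^\infty$ is in hand, $L^m$ is free. Both arguments ultimately hinge on the same structural gap $m-1>p'-1$.
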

	\begin{proof}
		From Corollary \ref{cor:invariant-functional} and Lemma \ref{lm:euler-lagrange} we know that $\rho_{s}$ is a radially symmetric nonincreasing H\"older continuous function. This yields that 
		\begin{equation} \label{eqn:stime-puntuali}
			\|\rho_s\|_{\infty} = \rho_s(0) \qquad \mbox{ and } \qquad \rho_s(x) \le \frac{M}{\omega_N |x|^N} \qquad \mbox{ for every } x \in \mathbb{R}^N \setminus \{0\},
		\end{equation}
		for every $s$. The first fact is clear, and the second one follows since we have 
		\[
		M \ge \int_{B_{|x|}} \rho_{s}\, dy \ge \int_{B_{|x|}} \rho(x)\, dy = \rho_{s}(x)\, \omega_N\, |x|^N, \qquad \mbox{ for every } x \in \mathbb{R}^N \setminus \{0\}.
		\] 
		By using the Euler-Lagrange equation \eqref{eqn_el} and H\"older's inequality we get
		\[
		\begin{split}
			\frac{m'}{\chi} \rho_{s}(0)^{m-1} &\le  K_{s/2} \ast (K_{s/2} \ast \rho_{s})^{p'-1}(0) \\
			&= c_{N, s/2}\,\int_{B_1} |y|^{s-N} \left(K_{s/2} \ast \rho_s\right)^{p'-1}(y)dy + c_{N, s/2}\int_{B^c_1} |y|^{s-N} \left(K_{s/2} \ast \rho_s\right)^{p'-1}(y)dy\\
			&\le c_{N, s/2}\,\left(\frac{N\omega_N}{s}\right)\,\|K_{s/2} \ast \rho_{s}\|^{p'-1}_\infty +  c_{N, s/2}\,\frac{N \omega_N}{(N-s)\,p' + N}\,\|K_{s/2} \ast \rho_{s}\|_{p'}^{\frac{p'}{p}}\\
			&\le c_{N, s/2}\,\left(\frac{N\omega_N}{s}\right)\,\left(\alpha_s\,M + \beta_s\,\rho_{s}(0)\right)^{p'-1}  +  c_{N, s/2}\,\frac{N \omega_N}{(N-s)\,p' + N}\,H_s^{p'-1}\,\|\rho_{s}\|^{p'-1}_{(p^*_s)'},
		\end{split}
		\]
		where in the last line we used Lemma \ref{lm:bound-l-infinito-riesz} with data $q:=1$ and $r:= \infty$ and the HLS inequality \eqref{ineq:hls-particolare}. By spending again \eqref{eqn:stime-puntuali}, we infer 
		\[
		\int_{\mathbb{R}^N}  \rho_{s}^{(p^*_s)'}\,dx = \int_{B_{1}} \rho_{s}^{(p^*_s)'}\,dx + \int_{B^c_1} \rho_{s}^{(p^*_s)'}\,dx \le \omega_N\,\rho_{s}(0)^{(p^*_s)'} + \left(\frac{M}{\omega_N}\right)^{(p^*_s)'} \,\frac{1}{N\left((p^*_s)'-1\right)} 
		\]
		By collecting the last two inequalities, we get
		\[
		\begin{split}
			\frac{m'}{\chi} \rho_{s}(0)^{m-1} \le& c_{N, s/2}\,\left(\frac{N\,\omega_N}{s}\right)\,\left(\alpha_s\,M + \beta_s\,\rho_{s}(0)\right)^{p'-1}  \\ &+  c_{N, s/2}\frac{N\,\omega_N\,H_s^{p'-1}}{(N-s)\,p' + N}\left(\rho_{s}(0)^{(p^*_s)'}\omega_N + \left(\frac{M}{\omega_N}\right)^{(p^*_s)'} \frac{1}{N\,\left((p^*_s)'-1\right)}\right)^{\frac{p'-1}{(p^*_s)'}}. 
		\end{split}
		\]
		By contradiction, we assume now that 
		\[
		\limsup_{s \to 0} \rho_{s}(0) = +\infty,
		\]
		and we divide both sides of the previous inequality by $\rho_{s}(0)^{p'-1}$. By sending $s \searrow 0$, since $m > p'$ and by recalling the asymptotic behaviors of $\alpha_s, \beta_s, H_s$ and $c_{N, s/2}$ given respectively in Lemma \ref{lm:bound-l-infinito-riesz}, Corollary \ref{cor:hls} and \eqref{eqn:asym-behav}, we obtain a contradiction. This proves that there exists $s_0 >0$ such that 
		$
		 S:=\sup_{s \in (0, s_0)} \rho_{s}(0) < \infty. 
		$
		We conclude the proof by observing that we can also infer the equiboundendness of $\|\rho_{s}\|_m$ for $s \in (0, s_0)$, since we have 
		$
		\|\rho_{s}\|_m \le S^{\frac{1}{m'}}\,M^{\frac{1}{m}}, \mbox{ for } s \in (0, s_0),
		$     
		in light of the interpolation inequality in $L^p$ spaces. 
	\end{proof}
	\begin{proposition}[Equiboundedness of ${\rm supp}(\rho_{s})$] \label{supports-equibounded}
		Let $1 < p < \infty$ and $0<s\,p < N$. Let $m > p'$ and let $\chi, M > 0$. Let $\rho_s \in \mathcal{Y}_M$ be a minimizer of $\mathcal{F}_{s,p}$ over $\mathcal{Y}_M$ \MMM for every $s\in(0,N/p)$\KKK. Then there exist $ s_0 \in(0,N/p)$ and $R_0 > 0$ such that ${\rm supp}(\rho_{s}) \subseteq B_{R_0}$ for every $0 < s < s_0$.     
	\end{proposition}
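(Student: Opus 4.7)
The plan is to combine a uniform lower bound on $\mathcal{D}_s$ with a uniform pointwise decay estimate on the nonlinear potential $\mathcal{K}_{s,p}(\rho_s)$ at infinity, then use the Euler-Lagrange equation \eqref{euler-lagrange-eqnfinale} to exclude points of large norm from the support. The uniform bound $\mathcal{D}_s \geq \delta > 0$ for $s$ small is obtained by refining Corollary \ref{staccati-da-zero}: by the identity in Remark \ref{rmk:costante-el}, $\mathcal{D}_s$ equals $-\mathcal{F}_{s,p}(\rho_s)$ up to a positive continuous factor whose limit as $s\to 0$ is nonzero (using $m>p'$, so the limit exponent $p-m'$ is positive, and $m_c\to p'$). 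Together with $\limsup_{s\to 0}\mathcal{F}_{s,p}(\rho_s)<0$, this gives $\liminf_{s\to 0}\mathcal{D}_s>0$.

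The main analytical step is the uniform decay estimate $\mathcal{K}_{s,p}(\rho_s)(x) \leq C\,|x|^{-\beta}$ for $|x|\geq 1$, with $C$ and $\beta>0$ independent of $s$ small. First, I would observe that $\mathcal{K}_{s,p}(\rho_s)$ is radial nonincreasing: setting $u_s:=K_{s/2}\ast\rho_s$, then $u_s$ is radial nonincreasing as convolution of two nonnegative radially nonincreasing functions (via \cite{Can}), hence so is $u_s^{p'-1}$, and hence so is $K_{s/2}\ast u_s^{p'-1}=\mathcal{K}_{s,p}(\rho_s)$. This reduces the pointwise decay to a control of the $L^{p^*_s}$ norm, via the standard bound $f(x)\leq \|f\|_{p^*_s}(\omega_N|x|^N)^{-1/p^*_s}$ valid for nonnegative radial nonincreasing $f$. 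Proposition \ref{equibounded} and interpolation between $L^1$ and $L^\infty$ yield a uniform bound on $\|\rho_s\|_{(p^*_s)'}$; two successive applications of the HLS inequalities (\eqref{ineq:hls-particolare} and \eqref{hls-inequality-con-asym} with $q=p$) together with the asymptotic bounds $\limsup_{s\to 0}H_s,\overline{H}_s\leq 1$ then produce uniform bounds on $\|u_s\|_{p'}$ and in turn on $\|\mathcal{K}_{s,p}(\rho_s)\|_{p^*_s}\leq \overline{H}_s\|u_s\|_{p'}^{p'-1}$. Since $p^*_s\to p$ as $s\to 0$, the exponent $N/p^*_s$ is bounded below by some $\beta>0$ for $s$ small, delivering the claimed decay.

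Having the two ingredients in hand, I would select $R_0$ such that $C\,R_0^{-\beta}<\delta/\chi$. For $|x|>R_0$ and $s$ sufficiently small one then has $\chi\,\mathcal{K}_{s,p}(\rho_s)(x)<\mathcal{D}_s$, so that the Euler-Lagrange equation \eqref{euler-lagrange-eqnfinale} forces $\rho_s(x)=0$, proving $\mathrm{supp}(\rho_s)\subseteq\overline{B_{R_0}}$. The main technical point is to keep all HLS constants uniformly controlled as $s\to 0$; this is ensured by the explicit estimate \eqref{bound-costante-hls} on $H_s$ and its analogue for $\overline{H}_s$ in Lemma \ref{lm:hls}, together with the fact that $p^*_s$ and $(p^*_s)'$ have nondegenerate limits as $s\to 0$. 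No further delicate arguments are needed.
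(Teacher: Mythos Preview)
Your proposal is correct and takes a genuinely different route from the paper. The paper argues by contradiction: assuming $R_s:=\mathrm{radius}(\mathrm{supp}\,\rho_s)\to+\infty$ along a subsequence, it evaluates the Euler--Lagrange equation at a boundary point $x\in\partial B_{R_s}$ (where $\chi^{-1}\mathcal D_s=\mathcal K_{s,p}(\rho_s)(x)$), splits the outer convolution integral over $B_1$ and $B_1^c$, and shows both pieces vanish as $s\to 0$ --- the far piece by H\"older and the smallness of $c_{N,s/2}$, the near piece by combining the radial decay of $u_s=K_{s/2}\ast\rho_s$ (from its uniform $L^{p'}$ bound) with the distance $|x-y|\ge R_s-1$. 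This contradicts $\limsup_{s\to0}\mathcal D_s>0$.

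Your argument is direct rather than by contradiction: you apply the radial-decreasing pointwise bound one level higher, to the full potential $\mathcal K_{s,p}(\rho_s)$ itself, using its uniform $L^{p^*_s}$ norm (obtained via two HLS applications). This is cleaner and avoids both the integral splitting and the explicit tracking of $c_{N,s/2}/s$. The only extra ingredient you need beyond the paper's toolbox is the strengthening of Corollary~\ref{staccati-da-zero} from $\limsup_{s\to0}\mathcal D_s>0$ to $\liminf_{s\to0}\mathcal D_s>0$, which indeed follows immediately from Remark~\ref{rmk:costante-el} once one notes that the prefactor $\frac{p^*_s-m'}{M}\cdot\frac{(m_c-1)(m-1)}{m_c-m}$ converges to a nonzero (negative) limit when $m>p'$. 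Both approaches ultimately rest on the same two pillars --- uniform positivity of $\mathcal D_s$ and uniform HLS bounds --- but yours packages the decay more efficiently.
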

	\begin{proof}
		\MMM Let $s_0$ given by  Proposition \ref{equibounded}. We set $B_{R_s} = {\rm supp}(\rho_{s})$ and by contradiction, we assume that 
		\begin{equation*} \label{hp-assurdo}
			\limsup_{s \to 0} R_s = +\infty.
		\end{equation*}
		This entails that $R_s > 1$, for small value of $s$.
		We preliminary observe that, from Corollary \ref{cor:hls} and Proposition \ref{equibounded}, we have 
		\begin{equation} \label{equilimitatezza-norma-pot}
			\|K_{s/2} \ast \rho_{s}\|_{p'} \le H_s\,\|\rho_{s}\|_{(p^*_s)'} \le H_s\,M^\vartheta\,\|\rho_{s}\|^{1-\vartheta}_\infty \le L, \qquad \mbox{ for } s \in (0, s_0),
		\end{equation}
		where $\vartheta = 1/(p^*_s)'$ and $L > 0$ is a constant dending only on  $s_0$. We take $x \in \partial B_{R_s}$, and by using Lemma \ref{lm:euler-lagrange} we get \KKK
		\begin{equation} \label{quasi}
			\begin{split}
				\frac{1}{\chi}\,\mathcal{D}_s &= K_{s/2} \ast \left(K_{s/2} \ast \rho_{s} \right)^{p' - 1}(x)
				= c_{N, s/2}\,\int_{B_1} | y|^{s-N} \left(\left(K_{s/2} \ast \rho_{s}\right)(x-y)\right)^{p'-1}\,dy \\& + c_{N, s/2}\,\int_{B^c_1} |y|^{s-N} \left(\left(K_{s/2} \ast \rho_{s}\right)(x-y)\right)^{p'-1}\,dy \\
				&=: \mathcal{A}_1 + \mathcal{A}_2.
			\end{split}
		\end{equation}
		We estimate the last two integrals separately, starting from the second one. By using H\"older's inequality (we observe that,  by assumption $p' > N/(N-s)$, so we have  $(s-N)\,p' + N < 0$) we have 
		\[
		\begin{split}
			\mathcal{A}_2
			&\le c_{N,s/2}\,\left(\int_{B_{1}^c} |y|^{(s-N)p'}\,dy \right)^{\frac{1}{p'}} \,\left(\int_{B_{1}^c} \left((K_{s/2} \ast \rho_{s})(x-y)\right)^{p'}\,dy\right)^{\frac{1}{p}} \\
			&\le c_{N,s/2} \,\frac{N\omega_N}{(N-s)\,p' + N} \,\|K_{s/2} \ast \rho_{s}\|_{p'}^{\frac{p'}{p}}.
		\end{split}
		\]
		So by using \eqref{eqn:asym-behav} and \eqref{equilimitatezza-norma-pot} we get 
		\begin{equation} \label{limiteA2}
			\lim_{s \to 0}\mathcal{A}_2 = 0.
		\end{equation}
		We now consider the first integral 
		\[
		\begin{split}
			\mathcal{A}_1 =  c_{N, s/2} \int_{B_1} | y|^{s-N} \left(\left(K_{s/2} \ast \rho_{s}\right)(x-y)\right)^{p'-1}\,dy  
		\end{split}
		\]
		Since $\rho_{s}$ is nonincreasing, so is $K_{s/2} \ast \rho_{s}$ \MMM (see  \cite{Can})\KKK. Then, by using also \eqref{equilimitatezza-norma-pot}, we get 
		\begin{equation*} \label{stima-potenziale}
			L \ge \left(\int_{\mathbb{R}^N} |K_{s/2} \ast \rho_{s}|^{p'}\,dy \right)^{\frac{1}{p'}} \ge \left(\int_{B_{|x|}} |K_{s/2} \ast \rho_{s}|^{p'}\,dy\right)^{\frac{1}{p'}} \ge \left(\omega_N |x|^N \right)^{\frac{1}{p'}}\,\left(K_{s/2} \ast \rho_{s}\right)(x),
		\end{equation*} 
		for every $x \in \mathbb{R}^N \setminus \{0\}$. 
		Since $|x| = R_s > 1$, by the triangle inequality we have 
		\[
		|x-y| \ge |x| - |y| \ge R_s - 1, \qquad \mbox{ for } y \in B_1,
		\] 
		thus
		\[
		\begin{split}
			\mathcal{A}_1 \le L\,\omega_N^{-\frac{1}{p}}\,c_{N, s/2}\,\int_{B_1} |y|^{s-N} |x-y|^{-\frac{N}{p}}\,dy &\le \frac{L\,\omega_N^{-\frac{1}{p}}}{\left(R_s - 1\right)^{\frac{N}{p}}}\,c_{N, s/2}\,\int_{B_{1}} |y|^{s-N}\,dy \\
			&= \frac{L\,\omega_N^{-\frac{1}{p}}}{\left(R_s - 1\right)^{\frac{N}{p}}}\,c_{N, s/2}\,\left(\frac{N\omega_N}{s}\right). 
		\end{split}
		\] 
		By recalling \eqref{eqn:asym-behav} and \eqref{hp-assurdo}, we eventually get that 
		$
			\lim_{s \to 0} \mathcal{A}_1 = 0,
		$
		and so also 
		$
		\limsup_{s \to 0} \mathcal{D}_s = 0,
		$
		by \eqref{quasi} and \eqref{limiteA2}. This contradicts Corollary \ref{staccati-da-zero} and gives the desired result. 
	\end{proof}
	\begin{proposition} \label{prop:compattezza}
		Let $1 < p < \infty$ and $0<s\,p < N$. Let $m > p'$, let $\chi, M > 0$ and let $\rho_s \in \mathcal{Y}_M$ be a minimizer of $\mathcal{F}_{s,p}$ over $\mathcal{Y}_M$ \MMM for every $s\in(0,N/p)$\KKK.  There exists $s_0 \in (0, N/p)$ such that the family of minimizers $( \rho_s )_{s \in (0, s_0)}$ is equibounded in \MMM $W^{1,1}(\mathbb R^N)$. Moreover, if $(s_k)_{k \in \mathbb{N}} \subseteq (0, s_0)$ converges to $0$, then the family $(\rho_{s_k})_{k\in\mathbb N}$ admits limit points in the strong $L^1(\mathbb R^N)$ topology, and if $\rho$ is a limit point along a not relabeled subsequence we have $\rho\in\mathcal Y_M\cap L^\infty(\mathbb R^N)$ and
		\[
		\lim_{k \to \infty} \|\rho_{s_k} - \rho\|_q = 0, \qquad \mbox{ for every } q \in [1, \infty). \KKK
		\]
	\end{proposition}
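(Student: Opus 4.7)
The plan is to exploit the structural information already established for minimizers, namely radial monotonicity, the uniform $L^\infty$ bound of Proposition \ref{equibounded}, and the uniform support bound of Proposition \ref{supports-equibounded}, in order to control the BV-type seminorm of $\rho_s$. After extracting a compact subsequence, $L^q$ convergence for all finite $q$ will follow by interpolation.

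First I would fix $s_0\in(0,N/p)$ and $R_0,S>0$ so that, for every $s\in(0,s_0)$, we have $\mathrm{supp}(\rho_s)\subseteq B_{R_0}$ and $\|\rho_s\|_\infty\le S$. By Corollary \ref{W11} each $\rho_s$ lies in $W^{1,1}(\mathbb R^N)$, and by Lemma \ref{lm:euler-lagrange} it is radially nonincreasing. Denoting by $\xi_s(r)$ the radial profile, $\xi_s$ is nonincreasing, with $\xi_s(0)\le S$ and $\xi_s(r)=0$ for $r\ge R_0$, so $\int_0^\infty|\xi_s'(r)|\,dr=\xi_s(0)\le S$. Using the support bound to localize the Jacobian factor $r^{N-1}$, the radial coarea representation gives
\[
\int_{\mathbb R^N}|\nabla\rho_s|\,dx=N\omega_N\int_0^{R_0}|\xi_s'(r)|\,r^{N-1}\,dr\le N\omega_N\,R_0^{N-1}\,S,
\]
which, combined with $\|\rho_s\|_1=M$, yields the claimed equiboundedness of $(\rho_s)_{s\in(0,s_0)}$ in $W^{1,1}(\mathbb R^N)$.

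Next I would pick a sequence $s_k\to 0$. The sequence $(\rho_{s_k})$ is equibounded in $W^{1,1}(B_{R_0+1})$ and supported in $\overline{B_{R_0}}$, so by the Rellich--Kondrachov compact embedding $W^{1,1}(B_{R_0+1})\hookrightarrow L^1(B_{R_0+1})$ (alternatively, by Helly's selection theorem applied to the radial profiles, in the same spirit as in the proof of Lemma \ref{lm:extramls-hls}) we can extract a (not relabeled) subsequence strongly converging in $L^1(\mathbb R^N)$ to some nonnegative $\rho$, supported in $\overline{B_{R_0}}$ and radially nonincreasing. The $L^1$ convergence immediately gives $\int_{\mathbb R^N}\rho\,dx=M$; since the supports are all contained in $B_{R_0}$, the map $x\mapsto x\,\rho_{s_k}(x)$ converges in $L^1(\mathbb R^N;\mathbb R^N)$ to $x\mapsto x\,\rho(x)$, hence the center of mass of $\rho$ vanishes and $\rho\in\mathcal Y_M$. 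By extracting a further a.e.-convergent subsequence and using Fatou's lemma, $\|\rho\|_\infty\le S$, so $\rho\in L^\infty(\mathbb R^N)$.

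Finally, $L^q$ convergence for $1<q<\infty$ follows by the standard interpolation
\[
\|\rho_{s_k}-\rho\|_q\le\|\rho_{s_k}-\rho\|_1^{1/q}\,\|\rho_{s_k}-\rho\|_\infty^{1-1/q}\le(2S)^{1-1/q}\,\|\rho_{s_k}-\rho\|_1^{1/q}\longrightarrow 0.
\]
No real obstacle is expected: the entire argument is driven by the a priori bounds already proven, and the only point requiring a bit of care is the reduction of the gradient estimate to the one-dimensional total variation of the radial profile, which is where the compact support and the monotonicity are crucially combined.
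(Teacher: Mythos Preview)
Your proof is correct and follows essentially the same approach as the paper, which simply refers to \cite[Lemma 3.7]{HMVV} together with Proposition \ref{equibounded} and Proposition \ref{supports-equibounded}; you have spelled out precisely that argument, combining the uniform $L^\infty$ and support bounds with radial monotonicity to control the $W^{1,1}$ norm, and then concluding by compact embedding and interpolation.
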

	\begin{proof}
		The proof is the same of \cite[Lemma 3.7]{HMVV} by using Proposition \ref{equibounded} and Proposition \ref{supports-equibounded}. 
	\end{proof}
	The main result of this section regarding the case $m>p'$ reads as follows
	\begin{theorem} \label{thm:convergenza-forte-minimizzanti}
		Let $1 < p < \infty$ and  $0<s\,p < N$. Let $m > p'$, let $\chi, M > 0$ and let $\rho_s \in \mathcal{Y}_M$ be a minimizer of $\mathcal{F}_{s,p}$ over $\mathcal{Y}_M$ \MMM for every $s\in(0,N/p)$\KKK. Then 
		\[
		\lim_{s \to 0} \|\rho_{s} - \rho_{0}\|_q = 0, \qquad \mbox{ for every } q \in [1, \infty),
		\]
		where $\rho_{0}$ is given by \eqref{minimo-gamma-limite}.
	\end{theorem}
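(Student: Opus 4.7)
The plan is to exploit the compactness already established in Proposition \ref{prop:compattezza} and identify every subsequential limit of $(\rho_s)$ as the unique minimizer $\rho_0$ of $\mathcal{F}_0$ over $\mathcal{Y}_M$ provided by Proposition \ref{prop:gamma-limite}; then the whole family must converge. To begin, fix any $s_k \searrow 0$ and, by Proposition \ref{prop:compattezza}, extract a (not relabeled) subsequence such that $\rho_{s_k} \to \rho$ in $L^q(\mathbb{R}^N)$ for every $q \in [1,\infty)$, with $\rho \in \mathcal{Y}_M \cap L^\infty(\mathbb{R}^N)$. Since every $\rho_{s_k}$ is radially nonincreasing (Corollary \ref{cor:invariant-functional}, Lemma \ref{lm:euler-lagrange}), so is $\rho$, up to passing to an almost everywhere convergent subsequence.

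The core technical step is to pass to the limit in the energy $\mathcal{F}_{s_k,p}(\rho_{s_k})$. The diffusion term converges because $\|\rho_{s_k}\|_m \to \|\rho\|_m$ directly from strong $L^m$ convergence. For the interaction term I would prove that $\|K_{s_k/2} \ast \rho_{s_k}\|_{p'} \to \|\rho\|_{p'}$ by splitting
\[
K_{s_k/2} \ast \rho_{s_k} - \rho = K_{s_k/2} \ast (\rho_{s_k} - \rho) + (K_{s_k/2} \ast \rho - \rho).
\]
The second summand vanishes in $L^{p'}$ thanks to Theorem \ref{thm:kurokawa} applied with $q=1 < p'$ (valid since $\rho \in L^1 \cap L^\infty$). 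For the first summand, Corollary \ref{cor:hls} gives $\|K_{s_k/2} \ast (\rho_{s_k} - \rho)\|_{p'} \le H_{s_k}\,\|\rho_{s_k} - \rho\|_{(p^*_{s_k})'}$, and since by Proposition \ref{supports-equibounded} and Proposition \ref{equibounded} the difference $\rho_{s_k} - \rho$ is supported in a common ball and uniformly bounded in $L^\infty$, the interpolation inequality
\[
\|\rho_{s_k} - \rho\|_{(p^*_{s_k})'} \le \|\rho_{s_k} - \rho\|_1^{1/(p^*_{s_k})'}\,\|\rho_{s_k} - \rho\|_\infty^{1 - 1/(p^*_{s_k})'}
\]
combined with $\|\rho_{s_k} - \rho\|_1 \to 0$, $(p^*_{s_k})' \to p' \in (1,\infty)$ and $\limsup_{s\to 0} H_s \le 1$ yields the desired vanishing.

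Putting the two convergences together one obtains $\mathcal{F}_{s_k,p}(\rho_{s_k}) \to \mathcal{F}_0(\rho)$. By minimality of $\rho_{s_k}$ in $\mathcal{Y}_M$ and Theorem \ref{thm:kurokawa} applied to $\rho_0 \in L^1 \cap L^\infty$ we also get
\[
\mathcal{F}_0(\rho) = \lim_{k \to \infty} \mathcal{F}_{s_k,p}(\rho_{s_k}) \le \lim_{k \to \infty} \mathcal{F}_{s_k,p}(\rho_0) = \mathcal{F}_0(\rho_0).
\]
Hence $\rho \in \mathcal{Y}_M$ is a radially nonincreasing minimizer of $\mathcal{F}_0$, and Proposition \ref{prop:gamma-limite} forces $\rho = \rho_0$. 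Because every sequence $s_k \searrow 0$ admits a subsequence along which $\rho_{s_k} \to \rho_0$ in $L^q$ for every $q \in [1,\infty)$, and the limit is always the same, the entire family converges, which is the claim.

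The main obstacle is precisely the convergence of the nonlocal interaction norm: the kernel $K_{s_k/2}$ varies with $k$ and the HLS-type estimate from Corollary \ref{cor:hls} involves an exponent $(p^*_{s_k})'$ that also depends on $k$. The key ingredients that make the argument go through are the uniform $L^\infty$ bound and the equiboundedness of the supports (Propositions \ref{equibounded} and \ref{supports-equibounded}), which allow to upgrade the $L^1$ convergence to convergence in every $L^q$ uniformly in the exponent, and Kurokawa's Theorem \ref{thm:kurokawa} which handles the ``kernel approximating identity'' part on the fixed target $\rho$.
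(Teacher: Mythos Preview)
Your argument is correct and follows essentially the same route as the paper's proof: compactness from Proposition~\ref{prop:compattezza}, the same splitting $K_{s_k/2}\ast\rho_{s_k}-\rho = K_{s_k/2}\ast(\rho_{s_k}-\rho) + (K_{s_k/2}\ast\rho-\rho)$, HLS for the first piece, Kurokawa for the second, and identification of the limit via Proposition~\ref{prop:gamma-limite}. The only cosmetic differences are that the paper interpolates $\|\rho_{s_k}-\rho\|_{(p^*_{s_k})'}$ between $L^1$ and $L^m$ (via \eqref{ineq-hls-2}) rather than $L^1$ and $L^\infty$, and compares against a generic $\widetilde\rho\in\mathcal Y_M$ rather than $\rho_0$ specifically. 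One small slip: Theorem~\ref{thm:kurokawa} requires $1<q<p'$, so you cannot literally take $q=1$; since $\rho\in L^1\cap L^\infty$ any $q\in(1,p')$ works, so the fix is immediate.
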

	\begin{proof}
		By using Proposition \ref{prop:compattezza}, there exists a sequence $(s_k)_{k \in \mathbb{N}}$ converging to $0$ and a function $\rho \in \mathcal{Y}_M \cap L^\infty(\mathbb{R}^N)$ such that  
		\begin{equation} \label{hp-convergenza-forte}
			\lim_{k \to \infty} \|\rho_{s_k} - \rho\|_m = 0. 
		\end{equation}
		By the triangle inequality and the HLS-type inequality \eqref{ineq-hls-2}, we get 
		\[
		\begin{split}
			\|K_{s_k/2} \ast \rho _{s_k} - \rho\|_{p'} &\le \|K_{s_k/2} \ast \left(\rho_{s_k} - \rho\right)\|_{p'} + \|K_{s_k/2} \ast \rho - \rho\|_{p'} \\
			&\le  H_s\,(2\,M)^{\vartheta_0}\, \|\rho_{s_k} - \rho\|_m^{1-\vartheta_0} + \|K_{s_k/2} \ast \rho - \rho\|_{p'}. 
		\end{split}
		\]
		where $\vartheta_0$ is given by \eqref{esponente-interpol}, \MMM it depends on $s_k$ and converges to $1-m'/p>0$ as $k\to+\infty$. By Theorem \ref{thm:kurokawa}, we have 
		\[
		\lim_{k \to \infty} \|K_{s_k/2} \ast \rho - \rho\|_{p'} = 0,
		\]
		thus from the previous inequality, \MMM from  \eqref{hp-convergenza-forte} and from the bound on $H_s$ by Corollay \ref{cor:hls}, we get 
		\[
		\lim_{k \to \infty} \|K_{s_k/2} \ast \rho_{s_k} - \rho\|_{p'} = 0. 
		\]
		This entails that 
		\[
		\lim_{k \to \infty} \mathcal{F}_{s_k,p}(\rho_{s_k}) = \frac{\|\rho\|_m^m}{m-1} - \frac{\chi}{p'} \|\rho\|_{p'} = \mathcal{F}_0(\rho). 
		\]
		For every $\widetilde{\rho} \in \mathcal{Y}_M$, by using the previous equality, the minimality of $\rho_{s_k}$ and Theorem \ref{thm:kurokawa}, we infer 
		\[
		\mathcal{F}_0(\rho) = \lim_{k \to \infty} \mathcal{F}_{s_k,p}(\rho_{s_k}) \le \lim_{k \to \infty} \mathcal{F}_{s_k,p}(\widetilde{\rho}) = \mathcal{F}_0(\widetilde{\rho}).
		\] 
	 This proves that $\rho$ must be a minimizer of $\mathcal{F}_0$ in $\mathcal{Y}_M$ and so by Proposition \ref{prop:gamma-limite} it must coincide with $\rho_{0}$. By the arbitrariness of $(s_k)_{k \in \mathbb{N}}$, we eventually get that the whole family $(\rho_{s})$ strongly converges to $\rho_{0}$ in $L^q(\mathbb{R}^N)$, for every $q \in [1, \infty)$. 
	\end{proof}
	We further have the following $\Gamma-$convergence result. 
	\begin{proposition} \label{prop:gamma-convergenza}
	Let $1 < p < \infty$ and $\chi, M > 0$. Let $m > p'$. For $s \to 0$, the functional $\mathcal{F}_{s,p}$ $\Gamma-$converges to $\mathcal{F}_0$ on $\mathcal{Y}_M$ with respect to the strong convergence in $L^{p'}(\mathbb{R}^N)$.  
	\end{proposition}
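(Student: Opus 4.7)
The plan is to verify the two standard $\Gamma$-convergence inequalities on $\mathcal{Y}_M$ separately, using Kurokawa's theorem (Theorem \ref{thm:kurokawa}) as the main tool to handle the nonlocal term. For the $\Gamma$-limsup, given $\rho\in\mathcal{Y}_M$ I would simply take the constant recovery sequence $\rho_s\equiv\rho$. Since $m>p'$, interpolation between $L^1(\mathbb{R}^N)$ and $L^m(\mathbb{R}^N)$ gives $\rho\in L^{q}(\mathbb{R}^N)$ for every $q\in[1,m]$; in particular $\rho\in L^{q_0}(\mathbb{R}^N)\cap L^{p'}(\mathbb{R}^N)$ for some $1<q_0<p'$. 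Theorem \ref{thm:kurokawa} then yields $K_{s/2}\ast\rho\to\rho$ in $L^{p'}(\mathbb{R}^N)$, so $\|K_{s/2}\ast\rho\|_{p'}^{p'}\to\|\rho\|_{p'}^{p'}$ and hence $\mathcal{F}_{s,p}(\rho)\to\mathcal{F}_0(\rho)$.

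For the $\Gamma$-liminf inequality, I would fix a sequence $\rho_s\in\mathcal{Y}_M$ converging in $L^{p'}(\mathbb{R}^N)$ to some $\rho\in\mathcal{Y}_M$, and extract a subsequence $\rho_{s_k}$ realizing the liminf of $\mathcal{F}_{s_k,p}(\rho_{s_k})$, which without loss of generality is finite. The energy bound, combined with the HLS-type inequality \eqref{ineq-hls-2} (applicable for small $s$ since $m>p'$ implies $m>(p_s^*)'$) and the bound $p'(1-\vartheta_0)<m$ (again valid for small $s$, as in the limit $s\to 0$ it reduces to $m>p'$), yields $\sup_k\|\rho_{s_k}\|_m<+\infty$. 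Passing to a further subsequence so that $\rho_{s_k}\to\rho$ almost everywhere, Fatou's lemma then gives $\int\rho^m\le\liminf_k\int\rho_{s_k}^m$.

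The crucial remaining step is to prove $\|K_{s_k/2}\ast\rho_{s_k}\|_{p'}^{p'}\to\|\rho\|_{p'}^{p'}$. I would decompose
\[
K_{s_k/2}\ast\rho_{s_k}-\rho=K_{s_k/2}\ast(\rho_{s_k}-\rho)+(K_{s_k/2}\ast\rho-\rho),
\]
where the second summand vanishes in $L^{p'}$ by Theorem \ref{thm:kurokawa} applied to $\rho$. For the first summand, the HLS-type inequality \eqref{ineq:hls-particolare} gives
\[
\|K_{s_k/2}\ast(\rho_{s_k}-\rho)\|_{p'}\le H_{s_k}\,\|\rho_{s_k}-\rho\|_{(p_{s_k}^*)'},
\]
and I would interpolate $L^{(p_{s_k}^*)'}$ between $L^1(\mathbb{R}^N)$ (where $\|\rho_{s_k}-\rho\|_1\le 2M$ uniformly) and $L^{p'}(\mathbb{R}^N)$ (where $\rho_{s_k}-\rho\to 0$); since $(p_{s_k}^*)'\to p'$ from below, the interpolation exponent on the $L^1$ factor vanishes, and together with $\limsup_k H_{s_k}\le 1$ from Corollary \ref{cor:hls} this makes the whole summand tend to zero. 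Collecting these facts yields $\liminf_k\mathcal{F}_{s_k,p}(\rho_{s_k})\ge\mathcal{F}_0(\rho)$.

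The main technical obstacle I anticipate is precisely the interpolation in the moving exponent $(p_{s_k}^*)'$, which requires careful bookkeeping of the interplay between $(p_{s_k}^*)'\to p'$, the asymptotic behavior of $H_{s_k}$, and the $L^{p'}$-convergence of $\rho_{s_k}$; beyond this, the argument is routine and essentially follows the template of the strong convergence proof carried out in Theorem \ref{thm:convergenza-forte-minimizzanti}.
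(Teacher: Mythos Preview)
Your proposal is correct and follows essentially the same route as the paper: constant recovery sequence plus Kurokawa for the $\Gamma$-limsup, and for the $\Gamma$-liminf the same decomposition $K_{s/2}\ast\rho_s-\rho=K_{s/2}\ast(\rho_s-\rho)+(K_{s/2}\ast\rho-\rho)$, handled by the HLS inequality \eqref{ineq:hls-particolare} interpolated between $L^1$ and $L^{p'}$, together with Kurokawa and Fatou. Two minor remarks: the $L^m$-boundedness step you insert is unnecessary, since Fatou's lemma for the diffusion term requires only nonnegativity and a.e.\ convergence (obtained from $L^{p'}$ convergence up to a subsequence); and in your interpolation discussion, the convergence to zero is driven by the $L^{p'}$ factor (whose exponent $1-\vartheta$ stays bounded away from zero), not by the vanishing of the $L^1$ exponent, which merely keeps that factor bounded.
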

	\begin{proof}
Let \MMM $(\rho_{s})_{s \in (0, N/p)} \subset \mathcal{Y}_M$ \KKK and $\rho \in \mathcal{Y}_M$ be such that 
		\begin{equation} \label{hp:gamma-conv}
			\lim_{s \to 0} \|\rho_{s} - \rho\|_{p'} = 0.
		\end{equation}
		By the triangle inequality and the Hardy-Littlewood-Sobolev \MMM inequality \MMM \eqref{ineq:hls-particolare} \KKK we have 
		\[
		\begin{split}
			\|K_{s/2} \ast \rho_{s} - \rho\|_{p'} &\le \|K_{s/2} \ast (\rho_{s} - \rho)\|_{p'} + \|K_{s/2} \ast \rho - \rho\|_{p'} \\
			& \le H_s\,\|\rho_{s} - \rho\|_{(p^*_s)'} + \|K_{s/2} \ast \rho - \rho\|_{p'} \\
			& \le H_s\,\|\rho_{s} - \rho\|^{\vartheta}_{1}\,\|\rho_{s} - \rho\|^{1-\vartheta}_{p'} + \|K_{s/2} \ast \rho - \rho\|_{p'},
		\end{split}
		\]
		where $\vartheta = 1 - p/p^*_s$. By Theorem \ref{thm:kurokawa}, we have
		\begin{equation*} \label{claim-duecasi}
			\lim_{s \to 0} \|K_{s/2} \ast \rho - \rho\|_{p'} = 0,
		\end{equation*}
		wich entails that 
		\begin{equation*} \label{quasi-fatto}
			\lim_{s \to 0} \|K_{s/2} \ast \rho_{s} - \rho\|_{p'} = 0,
		\end{equation*}
		where we also used our assumption \eqref{hp:gamma-conv}. By Fatou's lemma and by spending the last information, we infer   
		\begin{equation*}
			\mathcal{F}_0(\rho) \le \liminf_{s \to 0} \mathcal{F}_{s,p}(\rho_{s}). 
		\end{equation*}
		On the other hand, let $\rho \in \mathcal{Y}_M$ we set $\rho_{s} := \rho$, for every $s$. By using again Theorem \ref{thm:kurokawa} we have 
		\[
		\limsup_{s \to 0} \mathcal{F}_{s,p}(\rho_s) = \mathcal{F}_0(\rho),
		\]
		From the last two facts we obtain the claimed result. 
	\end{proof}
	\subsection{The limiting fair competition regime} \label{subsection:section-m=p'}
	We now analyze the limiting behavior of the minimizers in the remaining case $m = p'$ thus concluding the proof of Theorem \ref{main2}. We treat separately the cases $\chi \neq p$ and $\chi = p$, staring from the first one. 
	\begin{theorem} \label{thm:m=p'}
		Let $1 < p < \infty$ and  $0<s\,p < N$. Let $M > 0$ and $m = p'$. If $\rho_s \in \mathcal{Y}_M$ is a minimizer of $\mathcal{F}_{s,p}$ over $\mathcal{Y}_M$ \MMM for every $s\in(0,N/p)$\KKK, then we have 
		\begin{equation*}		
			\lim_{s \to 0} \|\rho_s\|_{\infty} = \MMM-\lim_{s\to 0}\mathcal F_{s,p}(\rho_s)\KKK =
			\begin{cases}
				0, \qquad &\mbox{ if } 0 < \chi < p,\\
				\\
				+\infty, \qquad &\mbox{ if } \chi > p.
			\end{cases} 
		\end{equation*}
		\MMM Moreover, if $\chi>p$ we have $\rho_s\to M\delta_0$ in the sense of measures as $s\to0$.\KKK
	\end{theorem}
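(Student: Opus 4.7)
The backbone of the argument will be the identity $\mathcal F_{s,p}(\rho_s) = -\frac{sp(p-1)}{N-sp}A_s$ with $A_s := \|\rho_s\|_{p'}^{p'}$, which follows from Remark \ref{rmk:identita-potenziale-minimi} applied at the critical dilation $\lambda_*(\rho_s)=1$, together with $m=p'$ (so $m'=p$) and the computation $p - p_s^* = -sp^2/(N-sp)$. In particular, $-\mathcal F_{s,p}(\rho_s)$ and $sA_s$ have the same asymptotic order as $s\to 0$, so the theorem reduces to the coupled control of $\mathcal F_{s,p}(\rho_s)$, $A_s$ and $\|\rho_s\|_\infty$.

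For $0<\chi<p$, my plan is first to show $\mathcal F_{s,p}(\rho_s)\to 0$ by matching bounds. The upper bound $\limsup\le 0$ follows from $\inf_{\mathcal Y_M}\mathcal F_0=0$ (Remark \ref{limitfair}) combined with Kurokawa's theorem (Theorem \ref{thm:kurokawa}) applied to any competitor $\rho\in\mathcal Y_M$ with $\mathcal F_0(\rho)$ small. For the lower bound I would apply the HLS inequality \eqref{ineq-hls-2}, noting that for $m=p'$ the exponents are $\vartheta_0=sp/N$ and $1-\vartheta_0=1-sp/N$, to get
\[
\mathcal F_{s,p}(\rho_s) \ge (p-1)A_s - \frac{\chi}{p'}\,H_s^{p'}\,M^{spp'/N}\,A_s^{1-sp/N};
\]
minimizing the right-hand side in $A>0$ yields an infimum of order $-\bigl[\chi(1+o(1))/p\bigr]^{N/(sp)}$, which vanishes super-polynomially since $\chi/p<1$. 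Combined with the key identity this forces $A_s\to 0$. Then the Euler-Lagrange equation (Lemma \ref{lm:euler-lagrange}) evaluated at the origin reads $p\,\rho_s(0)^{p'-1}\le\chi\,\mathcal K_{s,p}(\rho_s)(0)$; splitting that integral into $B_1$ and $B_1^c$ exactly as in the proof of Proposition \ref{equibounded}, using Lemma \ref{lm:bound-l-infinito-riesz} with $q=1,\,r=\infty$ (so $\alpha_s\to 0$, $\beta_s\to 1$, $c_{N,s/2}N\omega_N/s\to 1$) and exploiting $A_s\to 0$ for the tail piece, I obtain $p\,\rho_s(0)^{p'-1}\le \chi(1+o(1))\rho_s(0)^{p'-1}+o(1)$, which is incompatible with $\limsup\rho_s(0)>0$ since $\chi<p$.

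For $\chi>p$ I would prove $\mathcal F_{s,p}(\rho_s)\to -\infty$ via the explicit test sequence $\rho_s^{\mathrm{test}} := M(\omega_N\epsilon_s^N)^{-1}\,1_{B_{\epsilon_s}}$ with $\epsilon_s\to 0$ chosen so that $\epsilon_s^{sp'}\to 1$ (e.g.\ $\epsilon_s = s$, since $s\log s\to 0$). The Riesz scaling identity $K_{s/2}\ast\rho_s^{\mathrm{test}}(x) = \epsilon_s^{s-N}(K_{s/2}\ast f)(x/\epsilon_s)$ with $f := (M/\omega_N)\,1_{B_1}$, combined with Kurokawa's theorem applied to $f$, yields
\[
\mathcal F_{s,p}(\rho_s^{\mathrm{test}}) = \frac{M^{p'}\omega_N^{1-p'}}{p'}\,\epsilon_s^{-N(p'-1)}\,\bigl[p-\chi\,\epsilon_s^{sp'}\bigr] + o\bigl(\epsilon_s^{-N(p'-1)}\bigr),
\]
and the bracket tends to $p-\chi<0$ while the prefactor blows up, so $\mathcal F_{s,p}(\rho_s^{\mathrm{test}})\to -\infty$; minimality transfers this to $\rho_s$. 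The key identity then forces $A_s\to\infty$, and since $A_s\le\|\rho_s\|_\infty^{p'-1}M$, also $\|\rho_s\|_\infty\to\infty$.

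The hard part will be the measure convergence $\rho_s\to M\delta_0$ for $\chi>p$. My first step is to show that the supports $\mathrm{supp}(\rho_s)$ are equibounded by exploiting the Euler-Lagrange equation of Lemma \ref{lm:euler-lagrange}: on $\partial\,\mathrm{supp}(\rho_s)$ one has $\chi\,\mathcal K_{s,p}(\rho_s) = \mathcal D_s$ with $\mathcal D_s = sp^2 A_s/[M(N-sp)]\to +\infty$, whereas $\mathcal K_{s,p}(\rho_s)(x)$ is controlled by $c_{N,s/2}|x|^{s-N}\|(K_{s/2}\ast\rho_s)^{p'-1}\|_1$ for $x$ far from the support, forcing the boundary distance to the origin to stay bounded. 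With equibounded supports and the bound $\rho_s(x)\le M/(\omega_N|x|^N)$, a Helly-type extraction (as in Step 2 of Lemma \ref{lm:extramls-hls}) gives a subsequence converging pointwise, and weakly-$*$ as measures, to $\mu = \rho_\infty + c\,\delta_0$ with $\rho_\infty$ radial nonincreasing and $c\in[0,M]$. The key remaining step, which is where the bulk of the technical work lies, is to show $\rho_\infty\equiv 0$: the expected route is to construct a mass-preserving competitor obtained by redistributing the annular mass of $\rho_\infty$ towards the origin (for instance through a suitable dilation of a profile concentrated near zero) and to verify that it strictly lowers $\mathcal F_{s,p}$ in the limit, contradicting minimality; mass conservation then yields $c=M$.
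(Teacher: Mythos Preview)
Your arguments for $0<\chi<p$ and for $\mathcal F_{s,p}(\rho_s)\to-\infty$ when $\chi>p$ are correct. For $\chi<p$ the paper is slightly more direct: it does not first establish $A_s\to 0$, but simply assumes $\limsup\rho_s(0)>0$ and divides the origin estimate by $\rho_s(0)^{p'-1}$, obtaining $p/\chi-1\le o(1)$ by the asymptotics of $\alpha_s,\beta_s,c_{N,s/2}$ alone. Your extra step is harmless but unnecessary. For $\chi>p$ the paper uses Kurokawa on a fixed $\bar\rho$ with $\mathcal F_0(\bar\rho)<\beta$ rather than an explicit concentrating sequence; your construction works equally well.

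The genuine gap is in the measure convergence for $\chi>p$. First, your pointwise bound $\mathcal K_{s,p}(\rho_s)(x)\le c_{N,s/2}|x|^{s-N}\|(K_{s/2}\ast\rho_s)^{p'-1}\|_1$ is not valid: it would require $|x-y|\ge|x|$ for all $y$, and moreover the $L^1$ norm of $(K_{s/2}\ast\rho_s)^{p'-1}$ need not be finite (for large $|y|$ it behaves like $|y|^{(s-N)(p'-1)}$, which is not integrable when $p>2$). Second, and more seriously, your plan to first show only equiboundedness of supports, then extract a limit $\rho_\infty+c\,\delta_0$, and finally rule out $\rho_\infty\not\equiv 0$ by an unspecified competitor, is both roundabout and incomplete. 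The paper bypasses all of this by proving directly that $R_s\to 0$ (not merely bounded): assuming $\limsup R_s>0$, one evaluates the Euler--Lagrange equation at a boundary point $x\in\partial B_{R_s}$, splits $\mathcal K_{s,p}(\rho_s)(x)$ into a near part $\mathcal A_1$ (over $B_{\bar R}$, $\bar R=R_0/2$) and a far part $\mathcal A_2$, and uses the radial decay estimate $(K_{s/2}\ast\rho_s)(z)\le\|K_{s/2}\ast\rho_s\|_{p'}(\omega_N|z|^N)^{-1/p'}$ together with the identity $\|K_{s/2}\ast\rho_s\|_{p'}^{p'}=\tfrac{MN}{\chi sp}\mathcal D_s$ to bound both $\mathcal A_1$ and $\mathcal A_2$ by constants times $(\mathcal D_s/s)^{1/p}$. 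Dividing the resulting inequality by $\mathcal D_s^{1/p}$ leaves $\mathcal D_s^{1/p'}$ on the left and terms of order at most $s^{-1/p}$ on the right, which contradicts the super-polynomial blow-up of $\mathcal D_s$ (coming from $\mathcal F_{s,p}(\rho_s)\to-\infty$). Once $R_s\to 0$, mass conservation gives $\rho_s\to M\delta_0$ immediately.
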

	\begin{proof}
		By Corollary \ref{cor:invariant-functional}, we have that $\rho_{s}$ is radially symmetric and nonincreasing. We discuss separately two cases.
		\vskip.2cm \noindent 
		If $\boxed{0 < \chi < p}$, we argue by contradiction and assume that 
		\[\MMM
		\limsup_{s \to 0} \rho_{s}(0) > 0.\KKK
		\]
		By arguing as in the proof of Proposition \ref{equibounded}, we infer that 
		\[
		\begin{split}
			\frac{p}{\chi}\,\rho_{s}(0)^{p'-1} \le &\; c_{N, s/2}\,\left(\frac{N\omega_N}{s}\right)\,\left(\alpha_s\,M + \beta_s\,\rho_{s}(0)\right)^{p'-1}  \\ &+  c_{N, s/2}\,\frac{N\,\omega_N\,H_s^{p'-1}}{(N-s)\,p' + N}\left(\rho_{s}(0)^{(p^*_s)'}\,\omega_N + \left(\frac{M}{\omega_N}\right)^{(p^*_s)'} \frac{1}{N\,\left((p^*_s)'-1\right)}\right)^{\frac{p'-1}{(p^*_s)'}}. 
		\end{split}
		\]
		By dividing both sides of the previous inequality by $\rho_{s}(0)^{p'-1}$, we can rewrite it as
		\[
		\begin{split}
			&\frac{p}{\chi} - c_{N, s/2}\left(\frac{N\omega_N}{s}\right)\,\left( \frac{\alpha_s\,M}{\rho_{s}(0)} + \beta_s\right)^{p'-1} \\ &\quad\le  c_{N, s/2}\frac{N \omega_N\,H_s^{p'-1}}{(N-s)\,p' + N}\,\left(\omega_N + \left(\frac{M}{\omega_N\,\rho_{s}(0)}\right)^{(p^*_s)'}\,\frac{1}{N\,((p^*_s)' -1)}\right)^{\frac{p'-1}{(p^*_s)'}} = o(s).
		\end{split}
		\]
		By sending $s \to 0$ and by using \eqref{eqn:asym-behav}, \eqref{asym-bound-l-infinito-riesz1} and \eqref{asym-bound-l-infinito-riesz2}, we get a contradiction. \MMM Therefore $\rho_s$ converges uniformly to zero as $s\to 0$, which also implies $\mathcal F_{s,p}(\rho_s)\to0$, by Corollary \ref{cor:hls}.\KKK
		\vskip.2cm \noindent
		If $\boxed{\chi > p}$, we consider the limit functional $\mathcal{F}_0$, given by \eqref{limit-functional} with $m = p'$. For $\rho \in \mathcal{Y}_M$, we have
		\[
		\lim_{\lambda \to \infty} \mathcal{F}_0(\rho^\lambda) = 	\lim_{\lambda \to \infty} \lambda^{N\,(p'-1)}\,\left(\frac{1}{p'-1} - \frac{\chi}{p'}\right)\,\|\rho\|_{p'}^{p'} = -\infty,
		\] 
		where $\rho^\lambda \in \mathcal{Y}_M$ is the mass invariant dilation of $\rho$ by factor $\lambda$, given by \eqref{def:dilation}. This entails that 
		\[
		\inf_{\rho \in \mathcal{Y}_M} \mathcal{F}_0 = -\infty. 
		\] 
		Then if $\beta < 0$, we can take $\overline{\rho} \in \mathcal{Y}_M$ such that 
		$
		\mathcal{F}_0(\overline{\rho}) < \beta. 
		$
		By using that 
		\[
		\lim_{s \to  0} \|K_{s/2} \ast \overline{\rho} - \overline{\rho}\|_{p'} = 0,
		\]
		thanks to Theorem \ref{thm:kurokawa}, and by the minimality of $\rho_{s}$ we then obtain
		\[
		\limsup_{s \to 0} \mathcal{F}_{s,p}(\rho_{s}) \le \limsup_{s \to 0} \mathcal{F}_0(\overline{\rho}) = \mathcal{F}_0(\overline{\rho}) < \beta. 
		\]
		By the arbitrariness of $\beta$ and by using Remark \ref{rmk:costante-el}, this yields 
		\begin{equation} \label{asintotico-m=p'}
			\lim_{s \to 0} \mathcal{F}_{s,p}(\rho_{s}) = -\infty \quad \mbox{ and } \quad \lim_{s \to 0} \mathcal{D}_s = +\infty.
		\end{equation}
	 By contradiction, we assume that 
		\[
		R_0 := \limsup_{s \to 0} R_s > 0. 
		\]
		We take a sequence $(s_k)_{k \in \mathbb{N}} \subseteq (0,1)$ converging to zero and such that \begin{equation} \label{cazzatella}
			\lim_{k \to \infty} R_{s_k} = \limsup_{s \to 0} R_s = R_0.
		\end{equation}
		We set $\overline{R} = R_0/2$ and $ B_{R_{s_k}}:= {\rm supp}(\rho_{s_k})$. For $x \in \partial B_{R_{s_k}}$ by using Lemma \ref{lm:euler-lagrange} we have 
		\begin{equation} \label{quasi-asin-m=p'}
			\begin{split}
				\frac{1}{\chi}\,\mathcal{D}_{s_k} &= K_{{s_k}/2} \ast \left(K_{{s_k}/2} \ast \rho_{{s_k}} \right)^{p' - 1}(x) \\
				&= c_{N, {s_k}/2} \int_{B_{\overline{R}}} | y|^{{s_k}-N} \left(\left(K_{{s_k}/2} \ast \rho_{{s_k}}\right)(x-y)\right)^{p'-1}dy \\&\qquad + c_{N, {s_k}/2} \int_{B^c_{\overline{R}}} |y|^{{s_k}-N} \left(\left(K_{{s_k}/2} \ast \rho_{{s_k}}\right)(x-y)\right)^{p'-1}dy =: \mathcal{A}_1 + \mathcal{A}_2.
			\end{split}
		\end{equation}
		For $\mathcal{A}_1$, by arguing as in Proposition \ref{supports-equibounded}, we have that 
		\[
		\left(\omega_N |x|^N \right)^{\frac{1}{p'}}\,\left(K_{{s_k}/2} \ast \rho_{{s_k}}\right)(x) \le L, \qquad  \mbox{ for every } x \in \mathbb{R}^N \setminus \{0\},
		\]
		and, since $|x| = R_{s_k} > \overline{R}$ for $k$ large enough, by the triangle inequality we have 
		\[
		|x-y| \ge |x| - |y| \ge R_{s_k} - \overline{R}, \qquad \mbox{ for } y \in B_{\overline{R}},
		\] 
		This entails that 
		\begin{equation} \label{limite-1}
			\mathcal{A}_1 \le c_{N, {s_k}/2}\,\left(\frac{N\omega_N}{{s_k}}\right)\,\frac{L\,\omega_N^{-\frac{1}{p}}}{\left(R_{s_k} - \overline{R}\right)^{\frac{N}{p}}}.
		\end{equation}
		For $\mathcal{A}_2$, by using H\"older's inequality we have 
		\begin{equation} \label{limite-2}
			\begin{split}
				\mathcal{A}_2
				&\le c_{N,{s_k}/2}\, \left(\int_{B_{1}^c} |y|^{({s_k}-N)\,p'}\,dy \right)^{\frac{1}{p'}} \left(\int_{B_{1}^c} \left((K_{{s_k}/2} \ast \rho_{{s_k}})(x-y)\right)^{p'}\,dy\right)^{\frac{1}{p}} \\
				&\le c_{N,{s_k}/2} \,\frac{N\,\omega_N}{(N-{s_k})\,p' + N}\, \|K_{{s_k}/2} \ast \rho_{{s_k}}\|_{p'}^{\frac{p'}{p}} \\
				&\le E\,c_{N,{s_k}/2}\,\left(\frac{\mathcal{D}_{s_k}}{{s_k}}\right)^{\frac{1}{p}}, 
			\end{split}
		\end{equation}
		where in the last equality we used Remark \ref{rmk:costante-el} and we set 
		\[
		E = E\left(N,p, \chi, M\right):= \frac{N\omega_N}{(N-1)\,p' + N}\,\left(\frac{M}{\chi}\,\frac{N}{p}\right)^{\frac{1}{p}}. 
		\]
		By spending \eqref{limite-1} and \eqref{limite-2} in \eqref{quasi-asin-m=p'} and by dividing for $\mathcal{D}_{s_k}^{\frac{1}{p}}$, we get 
		\[
		\frac{1}{\chi}\,\mathcal{D}_{s_k}^{1- \frac{1}{p}} \le c_{N, {s_k}/2}\,\left(\frac{N\omega_N}{{s_k}}\right)\,\frac{L\,\omega_N^{-\frac{1}{p}}}{\left(R_{s_k} - \overline{R}\right)^{\frac{N}{p}}} + E\,c_{N,{s_k}/2}\,\left(\frac{1}{{s_k}}\right)^{\frac{1}{p}}. 
		\]
		In light of \eqref{eqn:asym-behav}, \eqref{asintotico-m=p'} and \eqref{cazzatella}, by sending $k \to \infty$, the last inequality yields a contradiction. This implies that 
		\[
		\lim_{s \to 0} R_s = 0, 
		\]
		thus, since $\rho_{s} \in \mathcal{Y}_M$, we must have that $\rho_s$ converges to a point mass at the origin as $s\to0,$
		as desired. \MMM Along with \eqref{asintotico-m=p'}, this concludes the proof. \KKK
	\end{proof}
	\begin{proposition}[Case $\chi = p$] \label{lastprop}
	Let $1 < p < \infty$ and  $0<s\,p < N$. Let $M > 0$, $m = p'$ and $\chi = p$. If $\rho_s \in \mathcal{Y}_M$ is a minimizer of $\mathcal{F}_{s,p}$ over $\mathcal{Y}_M$ \MMM for every $s\in(0,N/p)$\KKK, then we have 
		\[
		\lim_{s \to 0} \mathcal{F}_{s,p}(\rho_{s}) = 0 \qquad \mbox{ and } \qquad \lim_{s \to 0} \mathcal{D}_s = 0. 
		\]
	\end{proposition}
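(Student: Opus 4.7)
The plan starts with a clean algebraic identity, either by direct computation from the Euler-Lagrange equation or as a special case of Remark \ref{rmk:costante-el}. With $m = p'$ and $\chi = p$, substituting into the formula of Remark \ref{rmk:costante-el} and simplifying (the $s^{-1}$ terms from $(m_c - 1)(m-1)/(m_c - m)$ cancel exactly against $p^*_s - m' = sp^2/(N-sp)$) yields the exact identity
\[
\mathcal F_{s,p}(\rho_s) = -\frac{M}{p'}\,\mathcal D_s.
\]
Since $\mathcal D_s > 0$, the two claims of the statement are therefore equivalent, and it suffices to prove $\mathcal F_{s,p}(\rho_s) \to 0$ as $s \to 0$.

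The upper bound $\limsup_{s \to 0}\mathcal F_{s,p}(\rho_s) \le 0$ is obtained by testing against a fixed function $\widetilde \rho \in \mathcal Y_M \cap L^\infty(\mathbb R^N)$. Indeed, with $m = p'$ and $\chi = p$ one has $\chi/p' = 1/(p'-1)$, so the limit functional $\mathcal F_0$ from \eqref{limit-functional} vanishes identically on $\mathcal Y_M$; by Theorem \ref{thm:kurokawa} we then have $\mathcal F_{s,p}(\widetilde \rho) \to \mathcal F_0(\widetilde \rho) = 0$, and the minimality of $\rho_s$ gives the claim.

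For the matching lower bound, we apply the HLS inequality \eqref{ineq-hls-2} (valid since $m = p' > (p^*_s)'$ for small $s$) with $\vartheta_0 = sp/N$ to obtain, writing $Y_s := \|\rho_s\|_{p'}^{p'}$,
\[
\|K_{s/2}\ast \rho_s\|_{p'}^{p'} \le H^*_{p',s}\,M^{p'\vartheta_0}\,Y_s^{1-\vartheta_0}.
\]
An elementary one-variable minimization of $Y \mapsto Y - A\,Y^{1-\vartheta_0}$ over $Y \ge 0$ (attained at $Y^* = (A(1-\vartheta_0))^{1/\vartheta_0}$ with value $-Y^*\vartheta_0/(1-\vartheta_0)$) then yields
\[
\mathcal F_{s,p}(\rho_s) \ge -\frac{1}{p'-1}\,\frac{\vartheta_0}{1-\vartheta_0}\,\Big(H^*_{p',s}\,M^{p'\vartheta_0}\,(1-\vartheta_0)\Big)^{1/\vartheta_0}.
\]

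The main obstacle is showing that this lower bound vanishes as $s \to 0$. The harmless factors are $(1-\vartheta_0)^{1/\vartheta_0} \to e^{-1}$ and $M^{p'\vartheta_0 \cdot 1/\vartheta_0} = M^{p'}$; the crux is controlling $(H^*_{p',s})^{1/\vartheta_0}$ with $\vartheta_0 \sim s$. By Corollary \ref{cor:hls} one has $H^*_{p',s} \le H_s^{p'}$, and a Taylor expansion at $s = 0$ of the explicit upper bound \eqref{bound-costante-hls}, combined with the asymptotic \eqref{eqn:asym-behav} of $c_{N,s/2}$ and the smooth dependence on $s$ of the remaining factors, yields $H_s = 1 + O(s)$. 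Consequently $(H^*_{p',s})^{1/\vartheta_0} \le (1+O(s))^{p'N/(sp)} = O(1)$, and the whole lower bound is of order $\vartheta_0 \cdot O(1) \to 0$. Combining with the upper bound gives $\mathcal F_{s,p}(\rho_s) \to 0$, and the identity of the first paragraph gives $\mathcal D_s \to 0$ as well.
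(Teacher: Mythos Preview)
Your argument is correct and follows the same strategy as the paper: reduce both conclusions to the vanishing of $\mathcal F_{s,p}(\rho_s)$ via the identity from Remark~\ref{rmk:costante-el}, get the upper bound by testing a fixed competitor and invoking Theorem~\ref{thm:kurokawa}, and for the lower bound show that the key quantity stays bounded by proving $\limsup_{s\to 0} H_s^{1/s}<\infty$ from a Taylor expansion of the explicit bound \eqref{bound-costante-hls}.

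The only real difference is how you extract the lower bound. The paper uses that $\rho_s$ is an \emph{extremal} of the HLS inequality (Corollary~\ref{cor:invariant-functional}) together with the critical-point identity \eqref{identita-pot-minimi} to write $H^*_{p',s}=\tfrac{p^*_s}{p}\big(\|\rho_s\|_{p'}^{p'}/M^{p'}\big)^{s/N}$ exactly, and from this reads off $\|\rho_s\|_{p'}^{p'}\le M^{p'}\big(H_s^{p'}\,p/p^*_s\big)^{N/s}$, hence $\mathcal F_{s,p}(\rho_s)=-\tfrac{sp}{N-sp}(p-1)\|\rho_s\|_{p'}^{p'}\to 0$. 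You instead apply the HLS inequality as an inequality and perform the elementary minimization of $Y\mapsto Y-AY^{1-\vartheta_0}$, which avoids using extremality of $\rho_s$ and yields directly a lower bound of order $\vartheta_0\cdot(H^*_{p',s})^{1/\vartheta_0}$. Both routes lead to the same technical crux, $H_s=1+O(s)$, and your check that the right-hand side of \eqref{bound-costante-hls} is smooth in $s$ with value $1$ at $s=0$ (since $c_{N,s/2}/s$ extends smoothly and the remaining factor reduces to $\tfrac{1}{pp'}(p'+p)=1$) is exactly what the paper does in more detail.
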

	\begin{proof}
		We recall that by Proposition \ref{prop:optimal-dilation}, we have 
		\[
		\limsup_{s \to 0} \mathcal{F}_{s,p}(\rho_{s}) \le \limsup_{s \to 0} \mu_s \le 0.
		\]
		We claim that 
		\begin{equation} \label{claim-1}
			\limsup_{s \to 0} \|\rho_{s}\|_{p'} < \infty.
		\end{equation}
		Since we have 
		\[
		\mathcal{F}_{s,p}(\rho_{s}) = -\frac{s\,p}{N-s\,p}\,\|\rho_{s}\|_{p'}^{p'},
		\]
		by Remark \ref{rmk:costante-el}, this would entail that 
		\[
		\liminf_{s \to 0} \mathcal{F}_{s,p}(\rho_{s}) = 0,
		\] 
		and so the desired result. 
		By recalling \eqref{extremal-functional-hls} and Corollary \ref{cor:invariant-functional}, we have that  
		\[
		H_s^{p'} \ge H^*_{p', s} = \frac{\|K_{s/2} \ast \rho_s\|_{p'}^{p'}}{M^{p'\frac{s}{N}}\, \|\rho_{s}\|^{p'\,\left(1-\frac{s}{N}\right)}_{p'}} = \frac{p^*_s}{p}\,\left(\frac{\|\rho_{s}\|_{p'}^{p'}}{M^{p'}} \right)^{\frac{s}{N}},
		\] 
		where $H_s$ is given as in Corollary \ref{cor:hls} and the last equality follows from \eqref{identita-pot-minimi}. Our claim \eqref{claim-1}, will follow by proving that 
		\begin{equation} \label{claim-2}
			\limsup_{s \to 0} H_s^{\frac{1}{s}} < \infty. 
		\end{equation}
		By \eqref{bound-costante-hls}, we have 
		\begin{equation} \label{quasi-finito}
			\begin{split}
				H_s^{\frac{1}{s}} &\le \mathcal{B}_s^{\frac{1}{s}}\,\left(\frac{N-s}{N}\right)^\frac{N-s}{N\,s}\,\left(\frac{\left(p'\right)^{\frac{N-s}{N}} + (p^*_s)^{\frac{N-s}{N}}}{(p^*_s)'\,p}\right)^\frac{1}{s} \\ &\le \mathcal{B}_s^{\frac{1}{s}}\,\left(\frac{N}{N-s\,p}\right)^{\frac{1}{s}} \le \mathcal{B}_s^{\frac{1}{s}}\,\left(1 + \frac{s\,p}{N-p}\right)^{\frac{1}{s}},
			\end{split}
		\end{equation}
		where we set 
		\[
		\mathcal{B}_s:= \frac{c_{N, s/2}}{s}\,N\,\omega_N^{\frac{N-s}{N}}.
		\]
		By \eqref{eqn:asym-behav}, we have $\lim_{s \to 0} \mathcal{B}_s = 1$, and we can write
		\[
		\begin{split}
			\mathcal{B}_s = \frac{1}{2}\, \frac{\pi^{-\frac{N}{2}}\,2^{-s}\,\Gamma\left(\frac{N-s}{2}\right)}{\Gamma\left(\frac{s}{2} +1\right)}\,N\,\left(\frac{\pi^{\frac{N}{2}}}{\Gamma\left(\frac{N}{2} +1\right)}\right)^{\frac{N-s}{N}},
		\end{split}
		\]
		for every \MMM $s \in (0, N/p)$. \KKK Since $\mathcal{B}_s$ is smooth in a neighborhood of $s= 0$, we have  
		\[
		\lim_{s \to 0} \frac{\mathcal{B}_s-1}{s} < \infty. 
		\]
		By spending this information in \eqref{quasi-finito}, we get \eqref{claim-2} which in turn implies \eqref{claim-1}. Eventually, by recalling Remark \ref{rmk:costante-el} we infer the desired asymptotic behavior also for $\mathcal{D}_s$. 
	\end{proof}
	\begin{proof}[\bf Proof of Theorem \ref{main2}]
		If $m > p'$, the claimed result is contained in Theorem \ref{thm:convergenza-forte-minimizzanti}. If $m = p'$, the conclusion follows by Theorem \ref{thm:m=p'}, \MMM Proposition \ref{lastprop} and Remark \ref{limitfair}. \KKK
	\end{proof}

	\begin{ack}
 The authors wish to warmly thank Bruno Volzone for fruitful discussions and Pedro Miguel Campos for having provided us the reference \cite{Kurokawa}. The authors are members of the {\it Gruppo Nazionale per l'Analisi Matematica, la Probabilit\`a
		e le loro Applicazioni} (GNAMPA) of the Istituto Nazionale di Alta Matematica (INdAM). F.B. is partially supported by the ``INdAM - GNAMPA Project Ottimizzazione Spettrale, Geometrica e Funzionale", CUP E5324001950001.  E.M. is supported by the MIUR- PRIN project 202244A7YL.
\end{ack}


\begin{thebibliography}{100}
		
		
		\bibitem{AH} D.~R. Adams and L.~I. Hedberg, {\it Function spaces and potential theory}, Grundlehren der mathematischen Wissenschaften, 314, Springer, Berlin, 1996;
		
		\bibitem{AFP} L. Ambrosio, N. Fusco, D. Pallara, {\it Functions of bounded variation and free discontinuity problems}. Oxford Math. Monogr. The Clarendon Press, Oxford University Press, New York, 2000. 
		
		
		
		\bibitem{BL}
S.~Bian and J.-G. Liu.
\newblock Dynamic and steady states for multi-dimensional {K}eller-{S}egel
  model with diffusion exponent {$m>0$}.
\newblock {\em Commun. Math. Phys.}, 323(3):1017--1070, 2013.


\bibitem{BCC}{ A. Blanchet, V. Calvez, J. A. Carrillo}, {\it Convergence of the mass-transport steepest descent
scheme for the sub-critical Patlak-Keller-Segel model}. SIAM J.
Numer. Anal. {\bf 46} (2008) 691--721.

\bibitem{BCC2}{ A. Blanchet, E. Carlen, J. A. Carrillo}, {\it Functional inequalities, thick tails and asymptotics
for the critical mass Patlak-Keller-Segel model}. J. Func. Anal.
{\bf 262}, (2012) 2142-2230.

\bibitem{BCL} { A. Blanchet, J.A. Carrillo, P. Lauren\c{c}ot}, {\it Critical mass
for a Patlak-Keller-Segel model with degenerate diffusion in
higher dimensions}. Calc. Var. Partial Differential Equations {\bf
35} (2009), 133--168.

 \bibitem{BCM} { A. Blanchet, J.A. Carrillo, N. Masmoudi}, {\it Infinite time aggregation for the critical Patlak-Keller-Segel model in $\mathbb R^2$}, Comm. Pure Appl. Math. {\bf 61} (10) (2008), 1449--1481.
		


\bibitem{BDP} {A. Blanchet, J. Dolbeault, B. Perthame},
{\it Two-dimensional {K}eller-{S}egel model: optimal critical mass
and qualitative properties of the solutions}. Electron. J.
Differential  Equations {\bf 44} (2006), 32 pp. (electronic).


\bibitem{BCM} M. Burger, V. Capasso, D. Morale, {\it  On an aggregation model with long and short range interactions}, Nonlinear Anal. Real World Appl. {\bf 8} (2007), no. 3, 939--958.		


		\bibitem{CCH}
V.~Calvez, J.~A. Carrillo, and F.~Hoffmann.
\newblock Equilibria of homogeneous functionals in the fair-competition regime.
\newblock {\em Nonlinear Anal.}, 159:85--128, 2017.

\bibitem{CCH2}
V.~Calvez, J.~A. Carrillo, and F.~Hoffmann.
\newblock The geometry of diffusing and self-attracting particles in a
  one-dimensional fair-competition regime.
\newblock In {\em Nonlocal and nonlinear diffusions and interactions: new
  methods and directions}, volume 2186 of {\em Lecture Notes in Math.}, pages
  1--71. Springer, Cham, 2017.

\bibitem{CCH3}
V.~Calvez, J.~A. Carrillo, and F.~Hoffmann.
\newblock Uniqueness of stationary states for singular {K}eller-{S}egel type
  models.
\newblock {\em arXiv preprint arXiv:1905.07788}, 2019.


		
		\bibitem{Can} J.~A. Canizo, {\tt https://canizo.org/page/31}.
		
			\bibitem{CF} { E.A. Carlen, A. Figalli}, {\it Stability for a GNS inequality and the log-HLS inequality, with application to the critical mass Keller--Segel equation}, Duke Math. J. {\bf 162} (3) (2013) 579--625.	


\bibitem{CLLS} { E. Carlen, M. Lewin, E. H. Lieb, R. Seiringer}, {\it Stability estimate for the Lane-Emden inequality}, preprint arXiv:2410.20113.

	
		
		\bibitem{CCV} J.~A. Carrillo, D. Castorina and B. Volzone, Ground states for diffusion dominated free energies with logarithmic interaction, SIAM J. Math. Anal. {\bf 47} (2015), no.~1, 1--25;
		
		\bibitem{CFTV} J. A. Carrillo, M. Fornasier, G. Toscani, and F. Vecil
\newblock Particle, kinetic, and hydrodynamic models of swarming. 
\newblock In {\em G. Naldi, L. Pareschi, G. Toscani (eds) Mathematical Modeling of Collective Behavior in Socio-Economic and Life Sciences. Modeling and Simulation in Science, Engineering and Technology}, Birkh\"auser Boston,  
pages 297--336, 2010.


			\bibitem{CHVY}
J.~A. Carrillo, S.~Hittmeir, B.~Volzone, and Y.~Yao.
\newblock Nonlinear aggregation-diffusion equations: radial symmetry
and long time asymptotics.
\newblock {\em Invent. Math.},
218 (3): 889--977, 2019.

		
		
		\bibitem{CHMV} J.~A. Carrillo, F. Hoffmann, E. Mainini, B. Volzone, Ground states in the diffusion-dominated regime, Calc. Var. Partial Differential Equations {\bf 57} (2018), no.~5, Paper No. 127, 28 pp.;
		
		
		\bibitem{CGHMV}
H.~Chan, M.d.M. Gonz\'alez, Y.~Huang, E.~Mainini, B.~Volzone.
\newblock  Uniqueness of entire ground states for the fractional plasma problem.
 \newblock {\em Calc. Var. Partial Differential Equations}, { 59}: 195 (2020).


\bibitem{DYY}
M.~G. Delgadino, X.~Yan, and Y.~Yao.
\newblock Uniqueness and non-uniqueness of steady states of
  aggregation-diffusion equations.
\newblock {\em arXiv preprint arXiv:1908.09782}, 2019.



		


		
		\bibitem{Demengelbook} F. Demengel and G. Demengel, {\it Functional spaces for the theory of elliptic partial differential equations}, translated from the 2007 French original by Reinie Ern\'e, 
		Universitext, Springer, London, 2012 EDP Sci., Les Ulis, 2012;
		
		\bibitem{DiB} E. DiBenedetto, {\it Real analysis}, second edition, Birkh\"auser Advanced Texts: Basler Lehrb\"ucher, Birkh\"auser/Springer, New York, 2016; 
		
		\bibitem{DP} { J. Dolbeault, B. Perthame}, {\it Optimal critical mass in the two dimensional Keller-Segel model in $\mathbb R^2$}, C. R. Math. Acad. Sci.
Paris {\bf 339} (2004) 611--616.
		
		
		
		
		\bibitem{GZ} H. Gajewski, K. Zacharias, {\it
Global behaviour of a reaction–diffusion system modelling chemotaxis}
Math. Nachr. {\bf 195} (1998),  77--114.

		
		
		\bibitem{GHLM} { D. Greco, Y. Huang, Z. Liu, V. Moroz}, {\it Ground states of a non-local variational problem and Thomas-Fermi limit for the Choquard equation},  J. London Math. Soc. 111: e70115.

		
		
		\bibitem{H} { D. Horstmann}, { \it From 1970 until present: the Keller-Segel model in chemotaxis and its consequences}, I. Jahresber. Deutsch. Math.-Verein. {\bf 105} (2003), 103--165
		
		\bibitem{HMVV} Y. Huang, E. Mainini, J. L. V\'azquez, B. Volzone, Nonlinear aggregation-diffusion equations with Riesz potentials, J. Funct. Anal. {\bf 287} (2024), no.~2, Paper No. 110465, 59 pp.; 
		
		\bibitem{JL}  { W. J\"ager, S. Luckhaus}, {\it On explosions of solutions to a system of partial differential equations modelling chemotaxis}, Trans.
Amer. Math. Soc. {\bf 329} (1992) 819--824.

\bibitem{KS} { E. Keller, L. Segel}, {\it Initiation of slime mold aggregation viewed as an instability}, J. Theoret. Biol. {\bf 26} (1970) 399--415.

\bibitem{KS2} { E. Keller, L. Segel}, {\it Model for chemotaxis},  J. Theor. Biol. {\bf 30} (1971), 225--234.

		
		\bibitem{Kurokawa} T. Kurokawa, On the Riesz and Bessel kernels as approximations of the identity, Sci. Rep. Kagoshima Univ. No. 30 (1981), 31--45;
		
		
		
		\bibitem{Leoni_fractional} G. Leoni, {\it A first course in fractional Sobolev spaces}, Graduate Studies in Mathematics, 229, Amer. Math. Soc., Providence, RI, (2023);
		
		\bibitem{Lieb83} E.~H. Lieb, Sharp constants in the Hardy-Littlewood-Sobolev and related inequalities, Ann. of Math. (2) {\bf 118} (1983), no.~2, 349--374;
		
		\bibitem{LL} E. H. Lieb, M. Loss, {\it Analysis}. Second edition. Graduate Studies in Mathematics, {\bf 14}. American Mathematical Society, Providence, RI, 2001.
		
	
	   \bibitem{LiebOxford} E. Lieb, S. Oxford, Improved lower bound on the indirect Coulomb energy, Int. J. Quantum Chem., {\bf 19} (1980), pp. 427–439.
		
		\bibitem{Maz} V. Maz'ya, {\it Sobolev spaces with applications to elliptic partial differential equations}. Second, revised and augmented edition. Grundlehren der Mathematischen Wissenschaften [Fundamental Principles of Mathematical Sciences], {\bf 342}. Springer, Heidelberg, 2011. 
		
		\bibitem{MH} V.~G. Maz'ya, V.~P. Havin,  {\it Nonlinear potential theory}, Usp. Mat. Nauk {\bf 27} (1972), 67--138 (in Russian). English translation: Russ. Math. Surv. {\bf 27} (1972), 71--148.
		
		
		
		\bibitem{Mizuta} Y. Mizuta, {\it Potential theory in Euclidean spaces}, GAKUTO International Series. Mathematical Sciences and Applications, 6, Gakk\b otosho, Tokyo, 1996; 
		
		\bibitem{ME} A. Mogilner, L. Edelstein-Keshet, {\it A nonlocal model for a swarm}, J. Math. Biol. {\bf 38} (1999), no. 6, 534--570.

\bibitem{N} { V. Nanjundiah}, {\it Chemotaxis, signal relaying and aggregation morphology}, J. Theor. Biol. {\bf 42} (1973), 63--105.

\bibitem{P} { C.S. Patlak}, {\it Random walk with persistence and external bias}, Bull. Math. Biophys. {\bf 15} (1953),
311--338.



\bibitem{P} { B. Perthame}, {\it Transport Equations in Biology}, Frontiers in Mathematics. Birkh\"auser, Basel (2006).


		
		\bibitem{RosOtonSer} X. Ros-Oton and J. Serra, Regularity theory for general stable operators, J. Differential Equations {\bf 260} (2016), no.~12, 8675--8715;
		
		\bibitem{Samkobook} S.~G. Samko, A.~A. Kilbas and O.~I. Marichev, {\it Fractional integrals and derivatives}, translated from the 1987 Russian original, 
		Gordon and Breach, Yverdon, 1993;
		
		\bibitem{Stein61} E.~M. Stein, {\it The characterization of functions arising as potentials}, Bull. Amer. Math. Soc. {\bf 67} (1961), 102--104;
		
		\bibitem{Steinbook} E.~M. Stein, {\it Singular integrals and differentiability properties of functions}, Princeton Mathematical Series, No. 30, Princeton Univ. Press, Princeton, NJ, 1970; 
		
		 \bibitem{S} { Y. Sugiyama}, {\it Global existence in sub-critical cases and finite time blow-up in super-critical cases to degenerate Keller-Segel
systems}, Differential Integral Equations {\bf 19} (8) (2006) 841--876.


		
		\bibitem{Taheri-book} A. Taheri, {\it Function spaces and partial differential equations. Vol. 1. Classical analysis}, Oxford Lecture Series in Mathematics and its Applications, 40, Oxford Univ. Press, Oxford, 2015;
		
	
	\bibitem{Temam1}
R.~Temam.
\newblock A non-linear eigenvalue problem: the shape at equilibrium of a
  confined plasma.
\newblock {\em Arch. Rational Mech. Anal.}, 60(1):51--73, 1975/76.

\bibitem{Temam2}
R.~Temam.
\newblock Remarks on a free boundary value problem arising in plasma physics.
\newblock {\em Comm. Partial Differential Equations}, 2(6):563--585, 1977.


		
		\bibitem{Triebel1} H. Triebel, {\it Theory of function spaces}, reprint of 1983 edition, 
		Modern Birkh\"auser Classics, Birkh\"auser/Springer Basel AG, Basel, 2010;
		
		
		
		
		
		
		
	









		
		
	\end{thebibliography}
\end{document}